\documentclass[12pt]{amsart}
 \newtheorem{thm}{Theorem}[section]
\newtheorem{prop}[thm]{Proposition}
\newtheorem{lem}[thm]{Lemma}
\newtheorem{cor}[thm]{Corollary}
\theoremstyle{definition}

\theoremstyle{remark}
\newtheorem{rmk}[thm]{Remark}

\usepackage{amsfonts}
\usepackage{amsmath}
\usepackage{calligra}
\usepackage{amssymb}
\usepackage{verbatim}
\usepackage[colorlinks]{hyperref}
\usepackage[T1]{fontenc}
\usepackage{mathrsfs}
\usepackage{color}
\usepackage[utf8]{inputenc}
\usepackage[makeroom]{cancel}
\usepackage{tikz-cd}
\input xy
\xyoption{all}

  \usepackage{mathtools}

\numberwithin{equation}{section}
\usepackage{etoolbox}


\begin{document}

\title{ Pseudo-dual pairs and branching of Discrete Series}
\author{  Bent {\O}rsted,   Jorge A.  Vargas}
\thanks{Partially supported by Aarhus University  (Denmark),  CONICET
(Argentina)  }
\date{\today }
\keywords{  Admissible restriction, Branching laws,  Discrete Series, Kernel operators.}
\subjclass[2010]{Primary 22E46; Secondary 17B10}
\address{ Mathematics Department, Aarhus University, Denmark; FAMAF-CIEM, Ciudad Universitaria, 5000 C\'ordoba, Argentine}
\email{orsted@imf.au.dk,   vargas@famaf.unc.edu.ar}
\maketitle

\markboth{{\O}rsted- Vargas}{Duality}

\maketitle
{\centering\footnotesize  We are delighted to make this paper part of a tribute to Toshiyuki Kobayashi,\\ for his
indefatigable dedication to representation theory\\ and Lie theory, and his many new ideas here.\par}

\footnote{{\bf Keywords:} Admissible restriction, Branching laws,  Discrete Series, Multiplicity formulae.}
\footnote{{\bf MSC2020:} Primary 22E45; Secondary 20G05, 17B10.}

 \texttt{Abstract}{ For a semisimple Lie group $G$, we study Discrete Series representations with admissible branching
to a symmetric subgroup $H$. This is done using a canonical associated symmetric subgroup $H_0$,
forming a pseudo-dual pair with $H$, and a corresponding branching law for this group with respect to
its maximal compact subgroup. This is in analogy with either Blattner's or  Kostant-Heckman multiplicity formulas, and
has some resemblance to Frobenius reciprocity. We give several explicit examples and links to
Kobayashi-Pevzner theory of symmetry breaking and holographic operators. Our method is
well adapted to computer algorithms, such as for example the Atlas program.}

\tableofcontents

\section{Introduction} For a semisimple Lie group $G$, an irreducible representation $(\pi, V)$ of $G$ and closed reductive subgroup $H\subset G$ the problem of decomposing the restriction of $\pi$ to $H$ has received attention ever since number theory or physics and other branches of mathematics required a solution. In this paper, we are concerned with the important particular case of branching representations of the Discrete Series, i.e. those $\pi$ arising as closed irreducible subspaces of the left regular representation in $L^2(G)$, and breaking the symmetry by a reductive subgroup $H$. Here much work has been done. Notable is the paper of Gross-Wallach, \cite{GW}, and the work of Toshiyuki Kobayashi and his school. For further references on the subject, we refer to the overview work of Toshiyuki Kobayashi and references therein. The aim of this note is to compute the decomposition of the restriction of an $H$-admissible representation  $\pi$ to a symmetric subgroup $H$ (see~\ref{sec:gwmultip}), in \cite{GW} it is derive a duality Theorem for Discrete Series representation.   Their duality is based on the dual subgroup $G^d$ (this is
the dual subgroup which enters the duality introduced by Flensted-Jensen in his
study of discrete series for affine symmetric spaces \cite{FG}) and, roughly speaking, their formula looks like \begin{equation*} \dim Hom_H(\sigma, \pi_{\vert_H})=\dim Hom_{\widetilde K}(F_\sigma, \tilde\pi). \end{equation*} Here, $\pi$ is a irreducible square integrable representation of $G$, $\sigma $ is a irreducible representation of $H$, $F_\sigma$ is a irreducible representation of a maximal compact subgroup $\widetilde{K}$ of $G^d$, and $\tilde\pi$ is a finite sum of fundamental representations of $G^d$ attached to $\pi$. In \cite{os}, B. Speh and the first author considered  a different duality Theorem for restriction to a symmetric subgroup, let $H_0$ the associated subgroup to $H$ (see 3.1), hence, $L:=H_0\cap H$ is a maximal compact subgroup of both $H, H_0$. Then,  \begin{equation*} \dim Hom_H(\sigma, \pi_{\vert_H})=\dim  Hom_{L}(\sigma_0, \widetilde\Pi)\tag{$\ddag$}. \end{equation*} Here, $\pi$ is certain irreducible  representation of $G$, $\sigma $ is a irreducible representation of $H$, $\sigma_0$ is the lowest $L$-type of $\sigma$ and $\widetilde\Pi$ is a finite sum of irreducible representation of $H_0$ attached to $\pi$. The purpose of this paper is, for a $H$-admissible Discrete Series $\pi$ for $G$,  to show a formula as the above  and to provide an explicit isomorphism between the two vector spaces involved in the equality. This is embodied in Theorem~\ref{prop:Rwelldefgc}.

Theorem~\ref{prop:Rwelldefgc} reduces the branching law  in
two steps (1) For the maximal compact subgroup $K$ of $G$ and the lowest $K$-type
of $\pi$, branching this under $L$ (maximal compact in $H$ and also in $H_0$) (2)
branching a Discrete Series of $H_0$ with respect to $L$, i.e. finding its $L$-types with
multiplicity. Both of these steps can be implemented in algorithms, as they are available
for example in the computer program Atlas,  http://atlas.math.umd.edu.

  We would like to point out that T. Kobayashi, T. Kobayashi-Pevzner and Nakahama have shown a duality formula as $(\ddag)$ for holomorphic Discrete Series representation $\pi$. In order to achieve their result, they have shown an explicit isomorphism between the two vector spaces in the formula. Further, with respect to analyze $res_H(\pi)$,  Kobayashi-Oshima have shown a way to compute the irreducible components of $res_H(\pi)$ in the language of Zuckerman modules $A_\mathfrak q (\lambda)$ \cite{KO}\cite{KO2}.

As a consequence of the involved material, we obtain a necessary and sufficient condition  for a symmetry breaking operators to be represented via normal derivatives. This is presented in Proposition~\ref{prop:symmenormal}.

Another consequence is  Proposition~\ref{prop:gentau}. That is,  for    the closure of the linear span of the totality of $H_0$-translates (resp. $H$-translates) of the isotypic component associated to  the lowest $K$-type of $\pi$, we exhibit its explicit decomposition as a finite sum of Discrete Series representations of $H_0$ (resp. $H$).

Our proof is  based on the fact that Discrete Series representations are realized in reproducing kernel Hilbert spaces. As a consequence, in Lemma~\ref{lem:injecrh},  we obtain a general result on the structure of the kernel of a certain restriction map. The proof also relies on the work of Hecht-Schmid \cite{HS}, and a result of Schmid in \cite{Sc}.

It follows from the work of Kobayashi-Oshima, also, from Tables 1,2,3, that whenever a Discrete Series for $G$ has an    admissible restriction to a symmetric subgroup, then, the infinitesimal character of the representation is  dominant with respect  to either a Borel de Siebenthal system of positive roots or to a system of positive roots so that it has two noncompact simple roots, each  one, has multiplicity one in the highest root. Under the $H$-admissible hypothesis, the infinitesimal character of each of the irreducible components of $\widetilde{\Pi}$ in formula $(\ddag)$, has the same property as the infinitesimal character of $\pi$. Thus, for most $H$-admissible Discrete series,   to compute the right hand side of $(\ddag)$, we may appeal to the work of the first author and Wolf \cite{ow}. Their results let us   compute the highest weight of each irreducible factor in the restriction of $\pi$ to $K_1(\Psi)$. Next, we apply  \cite[Theorem 5]{DV} for the general case.

We may speculate that a formula like $(\ddag)$ might be true for $\pi$ whose underlying Harish-Chandra module is equivalent to a  unitarizable Zuckerman module. In this case, the definition of $\sigma_0$ would be the subspace spanned by the lowest $L$-type of $\sigma$ and $\widetilde \Pi$ would be a Zuckerman module attached to the lowest $K$-type of $\pi$.

The paper is organized as follows.  In Section 2, we introduce facts about Discrete Series  representation and   notation. In Section 3,
we state the main Theorem and begin its proof. As a tool, we obtain information on the kernel of the restriction map.

In Section 4, we complete the proof of the main Theorem. As a by-product, we obtain information on the kernel of the restriction map, under admissibility hypothesis.   We present examples and applications of the Main Theorem   in section 5. This includes  lists of multiplicity free restriction of representations, many  of the multiplicity free representations are non holomorphic Discrete Series representations. We also dealt with quaternionic and generalized quaternionic representations.

In Section 6, we analyze when symmetry breaking operators are represented by means of  normal derivatives. Section 7 presents  the list of $H$-admissible Discrete Series and related information.

\medskip

 {\it Acknowledgements:}
The  authors would like to thank T. Kobayashi for much insight  and inspiration on the problems considered here. Also, we thank Michel Duflo, Birgit Speh, Yosihiki Oshima and Jan Frahm for conversations on the subject.
Part of the research in this paper was carried out within the online research community on Representation Theory and Noncommutative Geometry sponsored by the American Institute of Mathematics. Also, some of  the results in this note were the subject of a talk in   the "Conference in honour of Prof. Toshiyuki Kobayashi" to celebrate his  sixtieth birthday, the authors  thank   the organizers for the facilities to present and participate in such a wonderful  meeting via zoom.  Finally, we thank the referees for their truly expert and careful advice in improving the paper.


\smallskip

\section{Preliminaries  and some notation}\label{sec:prelim}

Let $G$ be an arbitrary, matrix,  connected semisimple Lie group. Henceforth, we fix a maximal compact subgroup $K$ for $G$  and a maximal torus $T$ for $K.$   Harish-Chandra showed that $G$ admits square integrable irreducible representations if and only if $T$ is a Cartan subgroup of $G.$  For this paper,  we always assume $T$ is a Cartan subgroup of $G.$ Under these hypothesis, Harish-Chandra  showed that  the set of equivalence classes of irreducible square integrable representations is parameterized by a lattice in $i\mathfrak t^\star.$ In order to state our results we need to make  explicit this parametrization and set up some notation.   As usual,  the Lie algebra of a Lie group is denoted by the corresponding lower case German letter.  To avoid notation, the complexification of  the Lie algebra of a Lie group is  also denoted by the corresponding German letter without any subscript.     $V^\star $ denotes the dual space to a vector space $V.$ Let $\theta$ be the Cartan involution which corresponds to the subgroup $K,$ the associated Cartan decomposition is denoted by $\mathfrak g=\mathfrak k +\mathfrak p.$ Let $\Phi(\mathfrak g,\mathfrak t) $ denote the root system attached to the Cartan subalgebra $\mathfrak t.$ Hence, $\Phi(\mathfrak g,\mathfrak t)=\Phi_c \cup \Phi_n =\Phi_c(\mathfrak g, \mathfrak t) \cup \Phi_n (\mathfrak g, \mathfrak t)$ splits up as the union the set of compact roots and the set of noncompact roots. From now on, we fix a system of positive roots $\Delta $ for $\Phi_c.$   For this paper, either the highest weight or the infinitesimal character of an irreducible representation of $K$ is  dominant with respect to $\Delta.$ The Killing form gives rise to an inner product $(...,...)$ in $i\mathfrak t^\star.$ As usual, let $\rho=\rho_G $ denote half of the sum of the roots for some system of positive roots for $\Phi(\mathfrak g, \mathfrak t).$  \textit{A Harish-Chandra parameter} for $G$ is $\lambda \in i\mathfrak t^\star$ such that $(\lambda, \alpha)\not= 0 , $ for every $\alpha \in \Phi(\mathfrak g,\mathfrak t) ,$    and so that $\lambda + \rho$ lifts to a character of $T.$ To each Harish-Chandra parameter $\lambda$, Harish-Chandra, associates a unique irreducible square integrable representation $(\pi_\lambda^G , V_\lambda^G)$ of $G$ of infinitesimal character $\lambda.$ Moreover, he showed the map $\lambda \rightarrow (\pi_\lambda^G, V_\lambda^G)$  is a bijection from the set of Harish-Chandra parameters dominant with respect to $\Delta$  onto the set of equivalence classes of irreducible square integrable representations for $G$ (see \cite[Chap 6]{Wa1}). For short, we will refer to an irreducible square integrable representation as a Discrete Series representation.

 Each Harish-Chandra parameter $\lambda$ gives rise to a system of positive roots \\
\phantom{xxxxxxxxxxxxxxx}$\Psi_\lambda =\Psi_{G,\lambda} =\{  \alpha \in \Phi(\mathfrak g, \mathfrak t) : (\lambda, \alpha) >0 \}.$ \\ From now on, we assume that Harish-Chandra parameter for $G$ are dominant with respect to  $\Delta.$ Whence,  $\Delta \subset \Psi_\lambda.$ We write $\rho_n^\lambda =\rho_n =\frac 12 \sum_{ \beta \in \Psi_\lambda \cap \Phi_n} \beta $, $(\Psi_\lambda)_n:=\Psi_\lambda \cap \Phi_n$.                 We define $\rho_c=\frac12 \sum_{\alpha \in \Delta} \alpha$.

We denote by  $ (\tau ,W ):= (\pi_{\lambda +\rho_n}^K , V_{\lambda + \rho_n}^K) $   the lowest $K-$type of $\pi_\lambda :=\pi_\lambda^G.$ The highest weight of $(\pi_{\lambda +\rho_n}^K , V_{\lambda + \rho_n}^K)$ is $\lambda +\rho_n -\rho_c.$  We recall a Theorem of Vogan's thesis \cite{VoT}\cite{EW} which states that $(\tau,W)$ determines $(\pi_\lambda, V_\lambda^G)$ up  to unitary equivalence.   We recall the set of square integrable sections of the vector bundle determined by the principal bundle $K\rightarrow G \rightarrow G/K$ and the representation $(\tau, W)$ of $K$ is isomorphic to the space \begin{eqnarray*}\lefteqn{L^2(G\times_\tau W)  }\hspace{1.0cm} \\ & & := \{ f \in L^2(G) \otimes W :   f(gk)=\tau(k)^{-1} f(g),   g  \in G, k \in K \}.
   \end{eqnarray*}

   Here, the action of $G$ is by left translation $L_x, x \in G.$   The inner product on $L^2(G)\otimes W$ is given by \begin{equation*}(f,g)_{V_\lambda} =\int_G (f(x),g(x))_W dx, \end{equation*} where $(...,...)_W$ is a $K-$invariant inner product on $W.$
   Subsequently, $L_D $ (resp. $R_D)$ denotes the left infinitesimal (resp. right infinitesimal) action on functions from $G$ of an element  $D$ in universal enveloping algebra $\mathcal U(\mathfrak g)$ for the Lie algebra $\mathfrak g$.  As usual, $\Omega_G$ denotes the Casimir operator for $\mathfrak g.$   Following Hotta-Parthasarathy \cite{ho}, Enright-Wallach \cite{EW}, Atiyah-Schmid \cite{AS}, we realize $V_\lambda :=V_\lambda^G $ as the space
\begin{eqnarray*}
 \lefteqn{ H^2(G, \tau) =\{ f \in L^2(G) \otimes W : f(gk)=\tau(k)^{-1} f(g)} \hspace{3.0cm} \\ & & g\in G, k \in K, R_{\Omega_G} f= [(\lambda, \lambda) -(\rho, \rho)] f  \}.
 \end{eqnarray*}
We also  recall, $R_{\Omega_G}=L_{\Omega_G} $ is an elliptic $G-$invariant operator on the vector bundle $W \rightarrow G \times_\tau W \rightarrow G/K$ and hence,  $\,H^2(G,\tau)$ consists of smooth sections, moreover point evaluation $e_x$ defined by  $ \,H^2(G,\tau) \ni f \mapsto f(x) \in W $ is continuous for each $x \in G$ (cf. \cite[Appendix A4]{OV2}). Therefore, the orthogonal projector $P_\lambda$ onto $\,H^2(G,\tau)$ is an integral map (integral operator) represented by the smooth {\it matrix  kernel} or {\it reproducing kernel}   \cite[ Appendix A1, Appendix A4, Appendix A6]{OV2} \begin{equation} \label{eq:Klambda}K_\lambda : G\times G \rightarrow End_\mathbb C (W) \end{equation} which satisfies $  K_\lambda (\cdot ,x)^\star w$ belongs to $\,H^2(G,\tau)$ for each $x \in G, w \in W$ and $$ (P_\lambda (f)(x), w)_W=\int_G (f(y), K_\lambda (y,x)^\star w)_W dy, \,     f\in L^2(G\times_\tau W).$$
For a closed reductive subgroup  $H$, after conjugation by an inner automorphism of $G$ we may and will assume  $  L:=K\cap H $ is a maximal compact subgroup for $H.$ That is, $H$ is $\theta-$stable. In this paper for irreducible square integrable representations $(\pi_\lambda, V_\lambda)$  for $G $ we would like to analyze its restriction to $H.$ In particular, we study the irreducible $H-$subrepresentations for $\pi_\lambda$. A known fact is that any irreducible $H-$subrepresentation of $V_\lambda$ is a square integrable representation for $H$, for a proof  (cf. \cite{GW}). Thus, owing to the result of Harish-Chandra on the existence of square integrable representations,  from now on, we  may and will assume {\it $H$ admits a compact Cartan subgroup}. After conjugation, we may assume $U :=H\cap T$ is a maximal torus in $L=H\cap K.$ From now on, we set  a square integrable representation  $V_\mu^H\equiv H^2(H, \sigma) \subset L^2 (H \times_\sigma Z)$ of lowest $L-$type $(\pi_{\mu +\rho_n^\mu}^L, V_{\mu+\rho_n^\mu}^L)\equiv :(\sigma, Z)$.

For a representation $M$ and irreducible representation $N$, $M[N]$ denotes the isotypic
 component of $N$, that is, $M[N]$ is  the linear span of the irreducible subrepresentations of $M$ equivalent to $N$. If topology is involved $M[N]$ is the closure of the linear span.

For a $H$-admissible representation $\pi$, $Spec_H(\pi)$, denotes the set of Harish-Chandra parameters of the irreducible $H$-subrepresentations of $\pi$.
\section{Duality Theorem, explicit isomorphism}
\subsection{Statement and proof of the duality  result }
The unexplained notation is as in section~\ref{sec:prelim},  our hypotheses are $(G,H=(G^\sigma)_0)$ is a symmetric pair and $(\pi_\lambda, V_\lambda^G)$ is a $H$-admissible,  square integrable irreducible representation for $G$. $K=G^\theta$ is a maximal compact subgroup of $G$,  $H_0 :=(G^{\sigma \theta})_0$ and $K$ is  so that $L=H\cap K=H_0 \cap K$ is a maximal compact subgroup of both $H$ and $H_0$. By definition, $H_0$ is the {\it associated} subgroup to $H$.

In  this section, under our  hypothesis, for a $H$-irreducible      factor $V_\mu^H$ for $res_H(\pi_\lambda)$,
      we   show  an explicit isomorphism  from the  space \\  \phantom{xxxx} $Hom_H(V_\mu^H, V_\lambda^G)$ onto $Hom_L(V_{\mu +\rho_n^\mu}^L, \,  \pi_\lambda(\mathcal U(\mathfrak h_0))V_\lambda^G[V_{\lambda+\rho_n}^K])$.

We also   analyze   the restriction map $r_0 :H^2(G,\tau)\rightarrow L^2(H_0 \times_\tau W)$.

To follow, we present the necessary definitions and facts involved in the main statement.

 \subsubsection{ } We   consider
              the linear subspace $\mathcal L_\lambda$  spanned by the lowest $L$-type subspace of each irreducible $H$-factor of $res_H((L,H^2(G,\tau)))$. That is, \begin{center} $ \mathcal L_\lambda$ is the linear span of $\cup_{\mu \in Spec_H(\pi_\lambda)} H^2(G,\tau)[V_\mu^H][V_{\mu +\rho_n^\mu}^L]$. \end{center}  We recall that our hypothesis yields that the subspace of $L$-finite vectors in $V_\lambda^G$ is equal to the subspace of $K$-finite vectors \cite[Prop. 1.6 ]{Kob}. Whence,  we have  $\mathcal L_\lambda$ is a subspace of the space of $K$-finite vectors in $H^2(G,\tau)$.

              \subsubsection{} We also need the subspace
              \begin{equation*}    \mathcal U(\mathfrak h_0)W:=L_{\mathcal U(\mathfrak h_0)}H^2(G,\tau)[V_{\lambda+\rho_n^\lambda}^K]\equiv\pi_\lambda(\mathcal U(\mathfrak h_0))(V_\lambda^G[V_{\lambda+\rho_n^\lambda}^K]). \end{equation*}      We write $\mathrm{Cl}(\mathcal U(\mathfrak h_0)W)$ for the closure of $\mathcal U(\mathfrak h_0)W$.  Hence, $\mathrm{Cl}(\mathcal U(\mathfrak h_0)W)$ is the closure of the left translates by the algebra  $\mathcal U(\mathfrak h_0) $ of the subspace of $K$-finite vectors \begin{equation*} H^2(G,\tau)[V_{\lambda +\rho_n^\lambda}^K]= \{ K_\lambda (\cdot, e)^\star w: w \in W   \}\equiv W. \end{equation*} Thus, $\mathcal U(\mathfrak h_0)W$ consists of analytic vectors for $\pi_\lambda$. Therefore,    $\mathrm{Cl}(\mathcal U(\mathfrak h_0)W)$  is invariant under left translations by $H_0$. In Proposition~\ref{prop:gentau} we present the decomposition of $\mathcal U(\mathfrak h_0)W$ as a sum of irreducible representations for $H_0$.

              We point out
              \begin{center}  The  $L$-module   $\mathcal L_\lambda$  is equivalent to the underlying  $L$ -module in
          $\mathcal U(\mathfrak h_0)W$. \end{center}
              This has been proven in   \cite[(4.5)]{Vaint}. For completeness we   present  a proof in Proposition~\ref{prop:gentau}.

            \smallskip
           Under the extra  assumption $res_{L}(\tau)$ is irreducible, we have  $ \mathcal U(\mathfrak h_0)W $ is a irreducible $(\mathfrak h_0,L)$-module, and,  in this case, the  lowest $L$-type  of  $ \mathcal U(\mathfrak h_0)W $ is $(res_L(\tau),W)$. That is, $\mathcal U(\mathfrak h_0)W $ is equivalent to the underlying Harish-Chandra module for  $ H^2(H_0,res_L(\tau))$. The Harish-Chandra parameter $\eta_0 \in i\mathfrak u^\star $ for $\mathrm{Cl}(\mathcal U(\mathfrak h_0)W)$ is computed in \ref{sub:paramhc}.

  For scalar holomorphic Discrete Series,    the classification of the symmetric  pairs $(G,H)$ such that     the equality  $\mathcal U(\mathfrak h_0)W =\mathcal L_\lambda$ holds, is:

 \smallskip
   $ (\mathfrak{su}(m,n), \mathfrak{su} (m,l) +\mathfrak{su} ( n-l)+\mathfrak u(1)),$ $(\mathfrak{so}(2m,2), \mathfrak u(m,1)),$ \\ $(\mathfrak{so}^\star (2n), \mathfrak u(1,n-1)),$ $(\mathfrak{so}^\star(2n),  \mathfrak{so}(2) +\mathfrak{so}^\star(2n-2)),$ $(\mathfrak e_{6(-14)}, \mathfrak{so}(2,8)+\mathfrak{so}(2)).$  \cite[(4.6)]{Vaint}.
Thus,  there exists scalar holomorphic Discrete Series with $\mathcal U(\mathfrak h_0)W \not=\mathcal L_\lambda$.

\subsubsection{} To follow,  we set some more notation. We fix a representative for $(\tau,W)$. We write

\smallskip
  $(res_L(\tau), W)=\sum_{1\leq j \leq r} q_j (\sigma_j, Z_j)$, $q_j=\dim  Hom_L(Z_j, res_L(W))$

\noindent
and the decomposition in isotypic components
$$W=\oplus_{1\leq j \leq r} W[(\sigma_j, Z_j)]=\oplus_{1\leq j \leq r} W[\sigma_j].$$
From now on, we fix respective representatives for $(\sigma_j, Z_j)$ with $Z_j \subset W[(\sigma_j, Z_j)]$.

  Henceforth,  we denote   by $$ \mathbf H^2(H_0, \tau):= \sum_{j }  \,\dim  Hom_L(\tau, \sigma_j) \,  H^2(H_0 , \sigma_j).$$  We think the   later module as   a linear  subspace of $$  \sum_{j} L^2(H_0 \times_{\sigma_j}   W[\sigma_j])_{H_0-disc}  \equiv L^2(H_0 \times_\tau W)_{H_0-disc}.$$ Hence,  $\mathbf H^2(H_0, \tau) \subset L^2(H_0 \times_\tau W)_{H_0-disc}$. We note that when $res_{L}(\tau)$ is irreducible, then $\mathbf H^2(H_0, \tau)=H^2(H_0,res_L(\tau))$.

\subsubsection{} \label{sec:kernelprop} Owing to both spaces $H^2(H,\sigma),H^2(G,  \tau)$ are reproducing kernel spaces,   we represent  each  $T \in Hom_H(H^2(H,\sigma),H^2(G,  \tau))$  by a kernel $K_T :H\times G \rightarrow Hom_\mathbb C(Z,W)$ so that $K_T(\cdot,x)^\star w \in H^2(H,\sigma)$ and $(T(g)(x),w)_W =\int_H (g(h),K_T(h,x)^\star w)_Z dh$. Here, $x \in G, w \in W, g\in   H^2(H,\sigma)$. In \cite{OV2},  it is shown:  $K_T$ is a smooth function,     $ K_T(h,\cdot)z=K_{T^\star}(\cdot,h)^\star z \in H^2(G,\tau)$ and \begin{equation}\label{eq:kten} K_T(e,\cdot)z \in H^2(G,\tau)[V_\mu^H][V_{\mu+\rho_n^H}^L] \end{equation} is a $L$-finite vector in $  H^2(G,\tau)$.

\subsubsection{} Finally, we recall the restriction map \begin{equation*}\label{eq:r0} r_0 : H^2(G,\tau) \rightarrow  L^2(H_0 \times_\tau W),\,\, r_0(f)(h_0)=f(h_0), h_0 \in H_0, \end{equation*} is $(L^2,L^2)$-continuous \cite{OV1}.

  The main result of this section is,
\begin{thm}\label{prop:Rwelldefgc}   We assume $(G,H)$ is a symmetric pair and $res_H(\pi_\lambda)$ is admissible. We fix a irreducible factor $V_\mu^H$ for $res_H(\pi_\lambda)$.  Then,  the following statements hold.\\
i) The   map  $r_0 : H^2(G,\tau) \rightarrow  L^2(H_0 \times_\tau W)$  restricted to  $\mathrm{Cl}(\mathcal U(\mathfrak h_0)W )$ yields a isomorphism between $\mathrm{Cl}(\mathcal U(\mathfrak h_0)W )$ onto $\mathbf H^2(H_0, \tau)$. \\ ii) For each fixed intertwining $L$-equivalence

\smallskip
   \phantom{xxxxxx}  $D: \mathcal L_\lambda[V_{\mu +\rho_n^\mu}^L]=H^2(G,\tau)[V_\mu^H][V_{\mu +\rho_n^\mu}^L] \rightarrow  (\mathcal U(\mathfrak h_0)W)[V_{\mu +\rho_n^\mu}^L]$, \\ the map

    \smallskip
    \phantom{xxx} $r_0^D : Hom_H(H^2(H, \sigma),H^2(G,  \tau))\rightarrow Hom_L(V_{\mu +\rho_n^\mu}^L,   \mathbf H^2(H_0,\tau))$\\ defined by   $$ T \stackrel{r_0^D}\longmapsto(V_{\mu +\rho_n^\mu}^L\ni z \mapsto  r_0( D(K_T(e,\cdot)z))   \in \mathbf H^2(H_0,\tau))$$ is a linear  isomorphism.
\end{thm}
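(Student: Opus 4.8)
The plan is to prove (i) first and then bootstrap (i) into (ii). For part (i), the key is that $r_0$ is an $H_0$-equivariant map (since $H_0$ acts by left translation on both $H^2(G,\tau)$ and $L^2(H_0\times_\tau W)$, and $r_0$ intertwines these actions), and it is $(L^2,L^2)$-continuous. First I would show that $r_0$ maps $\mathrm{Cl}(\mathcal U(\mathfrak h_0)W)$ into $L^2(H_0\times_\tau W)_{H_0\text{-}disc}$: the subspace $\mathcal U(\mathfrak h_0)W$ consists of analytic vectors, it is $\mathfrak h_0$-stable by construction, and its closure is $H_0$-invariant; since $\pi_\lambda$ restricted to $H_0$ is admissible (this follows from the $H$-admissibility hypothesis via the associated-pair mechanism, cf. the discussion preceding the theorem and \cite{Vaint}), the image is a sum of Discrete Series of $H_0$. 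To identify that image precisely as $\mathbf H^2(H_0,\tau)$, I would invoke the paragraph before the theorem: $\mathcal U(\mathfrak h_0)W$, as an $(\mathfrak h_0,L)$-module, decomposes according to the $L$-isotypic pieces $W[\sigma_j]$ of $res_L(\tau)$, each piece generating a copy of $H^2(H_0,\sigma_j)$ with multiplicity $\dim\mathrm{Hom}_L(\tau,\sigma_j)$ — which is exactly the definition of $\mathbf H^2(H_0,\tau)$. Injectivity of $r_0$ on $\mathrm{Cl}(\mathcal U(\mathfrak h_0)W)$ is where Lemma~\ref{lem:injecrh} (the general structural result on the kernel of the restriction map) enters: the kernel of $r_0$ consists of sections vanishing on $H_0$, and one shows this kernel meets $\mathrm{Cl}(\mathcal U(\mathfrak h_0)W)$ trivially because a nonzero $H_0$-subrepresentation on which point evaluation at $e$ (hence on all of $H_0$ by translation) vanishes would contradict the reproducing-kernel realization — the vectors $K_\lambda(\cdot,e)^\star w$ generating $W$ do not vanish at $e$.

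For part (ii), the strategy is to factor the map $r_0^D$ through known isomorphisms. Start with $T\in\mathrm{Hom}_H(H^2(H,\sigma),H^2(G,\tau))$; by \ref{sec:kernelprop} (the reproducing-kernel analysis from \cite{OV2}, specifically \eqref{eq:kten}), the assignment $z\mapsto K_T(e,\cdot)z$ is a well-defined $L$-intertwining map from $Z=V^L_{\mu+\rho_n^\mu}$ into $H^2(G,\tau)[V_\mu^H][V_{\mu+\rho_n^\mu}^L]=\mathcal L_\lambda[V_{\mu+\rho_n^\mu}^L]$. Composing with the fixed $L$-isomorphism $D$ lands in $(\mathcal U(\mathfrak h_0)W)[V_{\mu+\rho_n^\mu}^L]$, and then applying $r_0$ — which by part (i) restricts to an isomorphism onto $\mathbf H^2(H_0,\tau)$ — lands in $\mathbf H^2(H_0,\tau)[V_{\mu+\rho_n^\mu}^L]$, so $r_0^D(T)\in\mathrm{Hom}_L(V_{\mu+\rho_n^\mu}^L,\mathbf H^2(H_0,\tau))$. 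That the composite is linear is immediate from linearity of $T\mapsto K_T$ and of $D,r_0$. To get bijectivity, I would exhibit the inverse, or equivalently decompose $r_0^D$ as a composite of three bijections: $T\mapsto (z\mapsto K_T(e,\cdot)z)$ is an isomorphism $\mathrm{Hom}_H(H^2(H,\sigma),H^2(G,\tau))\xrightarrow{\sim}\mathrm{Hom}_L(Z,\mathcal L_\lambda[V_{\mu+\rho_n^\mu}^L])$ — this is essentially a Frobenius-type reciprocity for Discrete Series realized as reproducing-kernel spaces, and is the content one extracts from \cite{OV2}; post-composition with $D_*$ is an isomorphism onto $\mathrm{Hom}_L(Z,(\mathcal U(\mathfrak h_0)W)[V_{\mu+\rho_n^\mu}^L])$ since $D$ is an $L$-isomorphism; post-composition with $(r_0)_*$ is an isomorphism onto $\mathrm{Hom}_L(Z,\mathbf H^2(H_0,\tau))$ by part (i).

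The main obstacle, I expect, is the first of these three isomorphisms: showing that $T\mapsto(z\mapsto K_T(e,\cdot)z)$ is a \emph{bijection} onto $\mathrm{Hom}_L(Z,\mathcal L_\lambda[V_{\mu+\rho_n^\mu}^L])$, not merely a well-defined injection. Injectivity should follow because an intertwining operator between reproducing-kernel Hilbert spaces is determined by its kernel function, which is in turn determined by the $L$-finite data $K_T(e,\cdot)$ together with $H_0$- (or $H$-)equivariance propagating it; surjectivity requires going backwards — given an $L$-map $Z\to\mathcal L_\lambda[V_{\mu+\rho_n^\mu}^L]$, one must reconstruct a bounded $H$-intertwiner $H^2(H,\sigma)\to H^2(G,\tau)$, which uses that $V_\mu^H$ occurs in $res_H(\pi_\lambda)$ (so that $\mathrm{Hom}_H\neq 0$ exactly when the $L$-data is nonzero) and that the $H$-admissibility guarantees the reconstruction lands in $L^2$. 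I would handle this by citing the reproducing-kernel formalism of \cite{OV2} for the correspondence $T\leftrightarrow K_T$, and using Vogan's lowest-$K$-type uniqueness (recalled in Section~\ref{sec:prelim}) to pin down that the lowest-$L$-type datum determines the $H$-subrepresentation, hence the intertwiner, up to the appropriate scalar freedom. The remaining verifications — equivariance of each arrow, compatibility of the isotypic truncations — are routine bookkeeping with the kernels.
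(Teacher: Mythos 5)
Your architecture matches the paper's at the top level: show the map is well defined and $L$-equivariant, get injectivity from Lemma~\ref{lem:injecrh}, and get surjectivity by matching dimensions. Your factorization of $r_0^D$ into three maps is clean, and the first factor $T\mapsto(z\mapsto K_T(e,\cdot)z)$ is indeed a bijection onto $Hom_L(Z,\mathcal L_\lambda[V_{\mu+\rho_n^\mu}^L])$ for the reasons you give: it is injective because $K_T(h,x)=K_T(e,h^{-1}x)$ recovers the whole kernel from $K_T(e,\cdot)$, and both spaces have dimension $m_\mu=\dim Hom_H(V_\mu^H,V_\lambda^G)$ because the lowest $L$-type of $V_\mu^H$ has multiplicity one. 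That step needs no deep input, and the second factor is a bijection by hypothesis on $D$.

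The genuine gap is in part (i), and it propagates into your third factor. You identify the image $r_0(\mathrm{Cl}(\mathcal U(\mathfrak h_0)W))$ with $\mathbf H^2(H_0,\tau)$ by ``invoking the paragraph before the theorem,'' but that paragraph is a statement of what is to be proven, not a proof. Concretely, two things must be established: first, that every irreducible $H_0$-constituent of $\mathrm{Cl}(\mathcal U(\mathfrak h_0)W)$ has one of the $\sigma_j$ as its \emph{lowest} $L$-type, so that the constituent is $H^2(H_0,\sigma_j)$ and not some other Discrete Series merely containing $\sigma_j$ as a higher $L$-type; and second, that the multiplicity of $H^2(H_0,\sigma_j)$ in $\mathrm{Cl}(\mathcal U(\mathfrak h_0)W)$ is exactly $q_j=\dim Hom_L(\sigma_j, res_L(\tau))$. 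Neither follows from admissibility plus the reproducing-kernel formalism alone. In the paper these are items (a)--(c) of Proposition~\ref{prop:gentau}, and their proof rests on the dimension-equality Theorem~\ref{prop:equaldim} together with Schmid's description of the $L$-types of a Discrete Series (lowest $L$-type plus a sum of noncompact roots in $\Psi_{H_0,\lambda}$); Theorem~\ref{prop:equaldim} is in turn the technical heart of the paper, proved via Blattner's formula and the Hecht--Schmid analysis of Borel--de Siebenthal systems (Lemma~\ref{lem:multshift}), the Duflo--Vargas partition-function computation (Lemma~\ref{lem:ddzhfinite}), and Lemma~\ref{lem:equalm}, with case-by-case root-system arguments. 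Without this input your step (c) only yields surjectivity onto $Hom_L(Z,r_0((\mathcal U(\mathfrak h_0)W)[Z]))$, which you cannot yet identify with the full target $Hom_L(V_{\mu+\rho_n^\mu}^L,\mathbf H^2(H_0,\tau))$. Relatedly, the existence of the $L$-equivalence $D$, which you rightly treat as given in (ii) but which is needed for the statement to have content, already encodes the equality $m_\mu=\dim Hom_L(V_{\mu+\rho_n^\mu}^L,\mathbf H^2(H_0,\tau))$ and is itself established in the paper only via the same dimension theorem.
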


  \begin{rmk}  When the natural inclusion $H/L \rightarrow G/K$ is a holomorphic map, T. Kobayashi, M. Pevzner and Y. Oshima in  \cite{Kob3},\cite{KP2} have shown a similar dual multiplicity result  after replacing the underlying Harish-Chandra module in  $\mathbf H^2(H_0,\tau)$ by its representation as a  Verma module. Also, in the holomorphic setting,   Jakobsen-Vergne in \cite{JV} has shown the isomorphism $H^2(G,\tau)\equiv \sum_{r\geq 0} H^2(H, \tau_{\vert_L} \otimes S^{(r)}((\mathfrak h_0 \cap \mathfrak{ p}^+))^\star)$. On the papers, \cite{Na} \cite{La}, we find applications of  the result of Kobayashi for their work on decomposing   holomorphic Discrete Series.
    H. Sekiguchi \cite{Se} has obtained a similar result of branching laws for singular holomorphic representations.
\end{rmk}

\begin{rmk} The   proof of Theorem~\ref{prop:Rwelldefgc} requires to show the map $r_0^D$ is well defined as well as several structure Lemma's. Once we verify the map is well defined, we will show injectivity, Corollary~\ref{prop:r0dinj}, Proposition~\ref{prop:equaldim} and linear algebra will give the surjectivity.   In Proposition~\ref{prop:gentau}, we show $i)$, and,  in the same Proposition we give a proof of the existence of the map $D$ as well as its bijectivity, actually this result has been shown in \cite{Vaint}. However, we sketch a proof in this note. The surjectivity also depends heavily on a result in \cite{Vaint}, for completeness we give a proof. We may say that our proof of Theorem 1 is rather long and intricate, involving both linear algebra for finding the multiplicities, and analysis of the kernels of the intertwining operators in question to set up the equivalence of the $H$-morphisms and the $L$-morphisms. The structure of the branching and corresponding symmetry breaking is however very convenient to apply in concrete situations, and we give several illustrations.

  We explicit the inverse map to the bijection $r_0^D$ in subsection~\ref{sub:inverserd}.
\end{rmk}

 \begin{rmk} \label{rmk:estruD}  When $\mathcal L_\lambda =\mathcal U(\mathfrak h_0)W$ we may take $D$ equal to the identity map.

  \end{rmk}

\begin{rmk} A mirror statement to Theorem~\ref{prop:Rwelldefgc} for symmetry breaking operators is as follows: $Hom_H(H^2(G,\tau),H^2(H,\sigma))$ is isomorphic to $Hom_L(Z,\mathbf H^2(H_0,\tau))$ via the map $S\mapsto (z\mapsto (H_0 \ni x \mapsto r_0^D(S^\star)(z)(x)=r_0(D(K_S(\cdot,x)^\star)(z))\in W))$.

\end{rmk}
 \medskip

  \subsubsection{} We verify  $r_0(D(K_T(e,\cdot)z))(\cdot)$ belongs to $L^2(H_0 \times_\tau W)_{H_0-disc}$.

\smallskip
 Indeed, owing to our hypothesis, a result    \cite{DV} (see \cite[Proposition 2.4]{DGV}) implies $\pi_\lambda$ is $L$-admissible. Hence,     \cite[Theorem 1.2]{Kob1} implies $\pi_\lambda$ is  $H_0$-admissible. Also, \cite[Proposition 1.6]{Kob} shows the subspace of $L$-finite vectors in $H^2(G,\tau)$ is equal to the subspace of $K$-finite vectors and $res_{\mathcal U(\mathfrak h_0)}(H^2(G,\tau)_{K-fin})$ is a admissible,  completely algebraically decomposable representation. Thus, the subspace $H^2(G,\tau)[W]\equiv W$ is contained in a finite sum of irreducible $\mathcal U(\mathfrak h_0)$-factors. Hence, $\mathcal U(\mathfrak h_0)W$ is a finite sum of irreducible $\mathcal U(\mathfrak h_0)$ factors. In \cite{GW}, we find a proof that each irreducible summand for $res_{H_0}(\pi_\lambda)$ is a square integrable representations for $H_0$, hence,  the equivariance and continuity of $r_0$ yields  $r_0(\mathrm{Cl}(\mathcal U(\mathfrak h_0)W))$ is contained in $L^2(H_0\times_{\tau} W)_{H_0-disc}$. \ref{sec:kernelprop} shows $K_T(e, \cdot)\in V_\lambda[V_\mu^H][V_{\mu+\rho_n^H}^L]$, hence, $D(K_T(e,\cdot)z) (\cdot)\in \mathcal U(\mathfrak h_0)W$, and   the claim follows.

\subsubsection{}   The map $Z \ni z \mapsto r_0(D(K_T(e,\cdot)z)) (\cdot)\in  L^2(H_0 \times_\tau W)$ is a $L$-map.

For this,
   we recall the equalities $$K_T(hl,gk)=\tau(k^{-1})K_T(h,g)\sigma (l), k \in K, l\in L, g\in G, h\in H. $$ $$K_T(hh_1,hx)=K_T(h_1,x), h, h_1 \in H, x\in G . $$ Therefore,  $K_T(e, hl_1)\sigma(l_2)z=\tau(l_1^{-1})K_T(l_2,h)z=\tau(l_1^{-1})K_T(e, l_2^{-1}h)z$ for $l_1, l_2 \in L, h\in H_0$ and we have shown the claim.

 \medskip

We have enough information to verify   the injectivity we have claimed  in $i)$ as well as the injectivity of the map $r_0^D$. For these, we show a   fact valid for a arbitrary reductive pair $(G,H)$ and arbitrary Discrete Series representation.

\subsection{  Kernel   of the restriction map  } In this paragraph we show  a fact valid for any reductive pair $(G,H)$ and arbitrary representation $\pi_\lambda$. The objects involved here are the  restriction map $r$ from $H^2(G,\tau)$ into $L^2(H\times_\tau W)$ and the subspace

 \begin{equation}\label{eq:uhzerow}    \mathcal U(\mathfrak h)W:=L_{\mathcal U(\mathfrak h)}H^2(G,\tau)[V_{\lambda+\rho_n^\lambda}^K]\equiv\pi_\lambda(\mathcal U(\mathfrak h))(V_\lambda^G[V_{\lambda+\rho_n^\lambda}^K]). \end{equation}      We write $\mathrm{Cl}(\mathcal U(\mathfrak h)W)$ for the closure of $\mathcal U(\mathfrak h)W$.  The subspace  $\mathrm{Cl}(\mathcal U(\mathfrak h)W)$ is the closure of the left translates by the algebra  $\mathcal U(\mathfrak h) $ of the subspace of $K$-finite vectors

 \smallskip
\phantom{xxxxxxxxxxxxx} $ \{ K_\lambda (\cdot, e)^\star w: w \in W   \}=H^2(G,\tau)[W]$.  \\ Thus, $\mathcal U(\mathfrak h)W$ consists of analytic vectors for $\pi_\lambda$. Hence,   $\mathrm{Cl}(\mathcal U(\mathfrak h)W)$  is invariant by left translations by $H$.
 Therefore the subspace \\ \phantom{xx} $ L_H(H^2(G,\tau)[W])=\{ K_\lambda (\cdot, h)^\star w=L_h(K_\lambda (\cdot,e)^\star w) : w \in W,   h\in H\}$ \\ is contained in $\mathrm{Cl}(\mathcal U(\mathfrak h)W)$. Actually,

\phantom{xxxxxxx} $\mathrm{Cl}(L_H(H^2(G,\tau)[W]))=\mathrm{Cl}(\mathcal U(\mathfrak h)W)$.

 The other inclusion follows from that $\mathrm{Cl}(L_H(H^2(G,\tau)[W]))$ is invariant by left translation by $H$ and $ \{ K_\lambda (\cdot, e)^\star w: w \in W   \}$ is contained in the subspace of smooth vectors  in $\mathrm{Cl}(L_H(H^2(G,\tau)[W]))$.

\smallskip

  The result pointed out in the title of   this paragraph is: \begin{lem} \label{lem:injecrh}Let $(G,H)$ be a arbitrary reductive pair and an arbitrary representation $(\pi_\lambda, H^2(G,\tau))$.   Then, $Ker(r)$  is equal to the orthogonal    subspace to $\mathrm{Cl}(\mathcal U(\mathfrak h)W)$.
\end{lem}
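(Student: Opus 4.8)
The plan is to prove the two inclusions $\ker(r)^\perp = \mathrm{Cl}(\mathcal U(\mathfrak h)W)$ by showing each side is the orthogonal complement of the other. The natural object to identify is $\ker(r)^\perp$, and I claim it equals $\mathrm{Cl}(L_H(H^2(G,\tau)[W]))$, which by the discussion immediately preceding the lemma is exactly $\mathrm{Cl}(\mathcal U(\mathfrak h)W)$. So it suffices to show $\ker(r)^\perp = \mathrm{Cl}(L_H(H^2(G,\tau)[W]))$, or equivalently that $\ker(r)$ is the orthogonal complement of the (not yet closed) linear span $L_H(H^2(G,\tau)[W]) = \{K_\lambda(\cdot,h)^\star w : h \in H,\ w \in W\}$.

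First I would unwind the definition of $r$ using the reproducing kernel. For $f \in H^2(G,\tau)$ and $h \in H$, $w \in W$, the defining property of $K_\lambda$ gives
\begin{equation*}
(r(f)(h), w)_W = (f(h), w)_W = \int_G (f(y), K_\lambda(y,h)^\star w)_W\, dy = (f,\ K_\lambda(\cdot,h)^\star w)_{V_\lambda}.
\end{equation*}
Hence $f \in \ker(r)$ if and only if $r(f)(h) = 0$ for a.e.\ $h \in H$, and since $f$ (hence $r(f)$) is smooth this is the same as $r(f)(h)=0$ for all $h\in H$; by the displayed identity this is equivalent to $(f,\ K_\lambda(\cdot,h)^\star w)_{V_\lambda} = 0$ for all $h \in H$ and all $w \in W$. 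Thus $\ker(r)$ is precisely the orthogonal complement in $H^2(G,\tau)$ of the set $S := \{K_\lambda(\cdot,h)^\star w : h \in H,\ w \in W\}$. Taking orthogonal complements once more and using that for any subset $S$ of a Hilbert space $(S^\perp)^\perp = \overline{\operatorname{span}}\, S$, we get $\ker(r)^\perp = \mathrm{Cl}(\operatorname{span} S) = \mathrm{Cl}(L_H(H^2(G,\tau)[W]))$. Combining with the pre-lemma identity $\mathrm{Cl}(L_H(H^2(G,\tau)[W])) = \mathrm{Cl}(\mathcal U(\mathfrak h)W)$ finishes the proof.

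The one point that needs genuine care — and which I expect to be the main (modest) obstacle — is the passage from ``$r(f) = 0$ in $L^2(H \times_\tau W)$'' (i.e.\ a.e.\ on $H$) to ``$f(h) = 0$ for every $h \in H$''. This uses that every $f \in H^2(G,\tau)$ is smooth (recalled in Section~\ref{sec:prelim} via ellipticity of $R_{\Omega_G}$ and continuity of point evaluations), so $r(f) = f|_{H_0}$—here $f|_H$—is a continuous section, and a continuous section vanishing almost everywhere vanishes identically. I would also note explicitly that the inner product computation above is legitimate: $K_\lambda(\cdot,h)^\star w \in H^2(G,\tau)$ by the reproducing property recalled around \eqref{eq:Klambda}, so pairing against it inside $V_\lambda = H^2(G,\tau)$ makes sense, and $L_h(K_\lambda(\cdot,e)^\star w) = K_\lambda(\cdot,h)^\star w$ by $G$-invariance (left translation) of the kernel, which is what identifies $S$ with $L_H(H^2(G,\tau)[W])$. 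With these remarks in place the argument is a short Hilbert-space duality, and no further structure of $(G,H)$ or of $\pi_\lambda$ is needed, consistent with the stated generality of the lemma.
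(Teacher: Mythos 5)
Your proposal is correct and follows essentially the same route as the paper: use the reproducing identity $(f(h),w)_W=(f,K_\lambda(\cdot,h)^\star w)_{V_\lambda}$ to identify $\ker(r)$ with the orthogonal complement of $\{K_\lambda(\cdot,h)^\star w: h\in H,\ w\in W\}=L_H(H^2(G,\tau)[W])$, then pass to closures and invoke the pre-lemma identity $\mathrm{Cl}(L_H(H^2(G,\tau)[W]))=\mathrm{Cl}(\mathcal U(\mathfrak h)W)$. Your explicit handling of the almost-everywhere versus everywhere vanishing of $r(f)$ via smoothness of elements of $H^2(G,\tau)$ is a point the paper leaves implicit, but it is the same argument.
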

\begin{proof}  Since, \cite{OV1}, $r: H^2(G,\tau)\rightarrow L^2(H\times_\tau W)$ is a continuous map, we have $Ker(r)$ is a closed subspace of $H^2(G,\tau)$. Next, for $f\in H^2(G,\tau)$,  it holds the  identity   \\ \phantom{xxx} $(f(x),w)_W=\int_G(f(y), K_\lambda (y,x)^\star w)_W dy, \forall x\in G, \forall w\in W$. \\ Thus,  $r(f)=0$ if and only if $f$ is orthogonal to the subspace spanned by $\{ K_\lambda (\cdot, h)^\star w: w \in W,   h\in H\}$.

 Hence, $\mathrm{Cl}(Ker(r))=(\mathrm{Cl}(L_H(H^2(G,\tau)[W])))^\perp$.  Applying the considerations after the definition of $\mathrm{Cl}(\mathcal U(\mathfrak h)W)$ we obtain $Ker(r)^\perp =  \mathrm{Cl}(\mathcal U(\mathfrak h)W)$.
    Thus $Ker(r)=(Ker(r)^\perp)^\perp =\mathrm{Cl}(\mathcal U(\mathfrak h)W)$.
\end{proof}
\begin{cor} Any irreducible $H$-discrete factor $M$ for $\mathrm{Cl}(\mathcal U(\mathfrak h)W)$ contains a $L$-type in $res_L(\tau)$. That is, $M[res_L(\tau)]\not=\{0\}.$
\end{cor}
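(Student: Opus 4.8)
The plan is to derive the statement directly from Lemma~\ref{lem:injecrh} by evaluating elements of $M$ at the identity of $G$. Concretely, I would use the point evaluation $e_e\colon H^2(G,\tau)\to W$, $e_e(v)=v(e)$, which is continuous by the remarks in Section~\ref{sec:prelim}. Since $L\subset K$ and $v(gk)=\tau(k)^{-1}v(g)$, one checks for $l\in L$ that
\[
e_e(L_l v)=v(l^{-1})=\tau(l^{-1})^{-1}v(e)=\tau(l)\,e_e(v),
\]
so $e_e$ is $L$-equivariant from $H^2(G,\tau)$, with $L$ acting by left translation, to $(W,res_L(\tau))$. (Morally this is the Frobenius-reciprocity ingredient behind the corollary.)

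Next I would show that $e_e$ does not vanish identically on $M$. Since $M$ is an $H$-submodule of $\mathrm{Cl}(\mathcal U(\mathfrak h)W)$, it is stable under $L_{h^{-1}}$ for every $h\in H$; hence if $e_e(f)=0$ for all $f\in M$, then $f(h)=e_e(L_{h^{-1}}f)=0$ for all $h\in H$ and all $f\in M$, i.e. $r(f)=0$ for all $f\in M$, where $r\colon H^2(G,\tau)\to L^2(H\times_\tau W)$ is the restriction map. Thus $M\subset Ker(r)$. But Lemma~\ref{lem:injecrh} identifies $Ker(r)$ with $\mathrm{Cl}(\mathcal U(\mathfrak h)W)^{\perp}$, while by hypothesis $M\subset\mathrm{Cl}(\mathcal U(\mathfrak h)W)$, so $M\subset\mathrm{Cl}(\mathcal U(\mathfrak h)W)\cap\mathrm{Cl}(\mathcal U(\mathfrak h)W)^{\perp}=\{0\}$, contradicting $M\neq\{0\}$. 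Therefore $e_e|_M$ is a nonzero $L$-morphism $res_L(M)\to(W,res_L(\tau))$.

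Finally I would conclude: since $L$ is compact, $res_L(M)$ is a Hilbert direct sum of irreducible $L$-modules, so a nonzero $L$-map is nonzero on some irreducible summand $\sigma$ of $res_L(M)$, and its image is then a nonzero $L$-submodule of $W=res_L(\tau)$ isomorphic to $\sigma$; hence $\sigma$ occurs as an $L$-type of $\tau$, and $M[res_L(\tau)]\neq\{0\}$. I do not anticipate a genuine obstacle here: the argument is formal once Lemma~\ref{lem:injecrh} is in hand, and it uses no admissibility or symmetry assumption, matching the generality of the statement. The only points requiring a little care are the bookkeeping of the left $L$-action through $e_e$ and the two facts already recorded in the text, namely that $H^2(G,\tau)$ consists of smooth sections with continuous point evaluations and that $\mathrm{Cl}(\mathcal U(\mathfrak h)W)$ is invariant under left translation by $H$.
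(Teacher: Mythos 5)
Your argument is correct and is essentially the paper's proof: the paper deduces the corollary from the injectivity of $r$ on $\mathrm{Cl}(\mathcal U(\mathfrak h)W)$ (Lemma~\ref{lem:injecrh}) together with Frobenius reciprocity for $L^2(H\times_\tau W)$, and your evaluation-at-the-identity map $e_e$ is precisely the explicit realization of that Frobenius reciprocity. The only difference is that you have unwound the reciprocity into its concrete form, which is a harmless (and arguably clarifying) expansion of the same reasoning.
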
 The corollary follows from that $r$ restricted to $\mathrm{Cl}(\mathcal U(\mathfrak h)W)$ is injective and that Frobenius reciprocity for $L^2(H\times_\tau W)$ holds.
 \subsection{The map $r_0^D$ is injective} As a consequence of the general fact shown in the previous subsection, we obtain the injectivity in $i)$ and the map $r_0^D$ is injective.
\begin{cor}Let $(G,H)$ be a symmetric pair and $H_0=G^{\sigma \theta}$. Then, the restriction map  $r_0 :H^2(G,\tau)\rightarrow L^2(H_0\times_\tau W) $ restricted to the subspace $\mathrm{Cl}(\mathcal U(\mathfrak h_0)W)$ is  injective.
\end{cor}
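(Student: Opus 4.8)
The plan is to obtain this as an immediate consequence of Lemma~\ref{lem:injecrh}. The point is that Lemma~\ref{lem:injecrh} was deliberately stated for an \emph{arbitrary} reductive pair, so it applies verbatim with $H$ replaced by the associated subgroup $H_0$. First I would record that since $(G,H)$ is a symmetric pair, its associated subgroup $H_0=(G^{\sigma\theta})_0$ is again symmetric, hence reductive, and that under our standing hypotheses it is $\theta$-stable with $L=H_0\cap K$ a maximal compact subgroup of $H_0$. Thus $(G,H_0)$ is a reductive pair to which Lemma~\ref{lem:injecrh} applies, the relevant subspace $\mathrm{Cl}(\mathcal U(\mathfrak h_0)W)$ being exactly the one defined in~\eqref{eq:uhzerow} with $\mathfrak h$ replaced by $\mathfrak h_0$, and the relevant restriction map being precisely $r_0$.

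Next I would invoke Lemma~\ref{lem:injecrh} for the pair $(G,H_0)$ to conclude that $Ker(r_0)$ equals the orthogonal complement of $\mathrm{Cl}(\mathcal U(\mathfrak h_0)W)$ inside $H^2(G,\tau)$. From here injectivity on $\mathrm{Cl}(\mathcal U(\mathfrak h_0)W)$ is pure linear algebra: if $f\in\mathrm{Cl}(\mathcal U(\mathfrak h_0)W)$ satisfies $r_0(f)=0$, then $f$ lies simultaneously in $\mathrm{Cl}(\mathcal U(\mathfrak h_0)W)$ and in its orthogonal complement $Ker(r_0)$, whence $f=0$. Therefore $r_0$ restricted to $\mathrm{Cl}(\mathcal U(\mathfrak h_0)W)$ has trivial kernel, i.e. is injective, which is the assertion.

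I expect no genuine obstacle: the entire analytic content — continuity of the restriction map, the reproducing kernel identity, and the identification $\mathrm{Cl}(L_{H_0}(H^2(G,\tau)[W]))=\mathrm{Cl}(\mathcal U(\mathfrak h_0)W)$ — has already been carried out in the proof of Lemma~\ref{lem:injecrh} and in the discussion preceding it. The only thing worth spelling out is the compatibility of hypotheses, namely that $H_0$ is a $\theta$-stable reductive subgroup sharing the maximal compact $L$ with $H$, so that Lemma~\ref{lem:injecrh} may legitimately be applied with $H=H_0$. One may equally phrase the proof by appealing directly to the remark, made just after Lemma~\ref{lem:injecrh}, that the statement holds for any reductive pair and any Discrete Series representation, specialized to $H_0$.
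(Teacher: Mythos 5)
Your proposal is correct and is exactly the paper's argument: the paper derives this corollary as an immediate consequence of Lemma~\ref{lem:injecrh} applied to the reductive pair $(G,H_0)$, which gives $Ker(r_0)=(\mathrm{Cl}(\mathcal U(\mathfrak h_0)W))^\perp$, so the restriction of $r_0$ to $\mathrm{Cl}(\mathcal U(\mathfrak h_0)W)$ has trivial kernel. Your extra check that $H_0$ is a $\theta$-stable reductive subgroup with maximal compact $L$ is the right compatibility point to note, and nothing further is needed.
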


\begin{cor}\label{prop:r0dinj} Let $(G,H)$ be a symmetric pair, $H_0=G^{\sigma \theta}$ and we assume $res_H(\pi_\lambda)$ is $H$-admissible. Then, the  map $r_0^D$ is injective. \end{cor}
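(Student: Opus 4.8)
The plan is to deduce Corollary~\ref{prop:r0dinj} from the general kernel computation in Lemma~\ref{lem:injecrh}, applied with the reductive pair $(G,H_0)$ in place of $(G,H)$, together with the structural information about $K_T$ collected in \ref{sec:kernelprop}. First I would recall that, by the $H$-admissibility of $\pi_\lambda$ and the string of implications already recorded just after the statement of the theorem (via \cite{DV}, \cite[Theorem 1.2]{Kob1} and \cite[Prop.~1.6]{Kob}), the representation $\pi_\lambda$ is $H_0$-admissible and $\mathcal U(\mathfrak h_0)W$ is a finite algebraic sum of irreducible $(\mathfrak h_0,L)$-modules, each underlying a square integrable representation of $H_0$; hence $\mathrm{Cl}(\mathcal U(\mathfrak h_0)W)$ is the Hilbert space direct sum of those Discrete Series, and $r_0$ restricted to it is injective by the previous Corollary (which is exactly Lemma~\ref{lem:injecrh} for the pair $(G,H_0)$). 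This is the input that makes $r_0^D$ have any chance of being injective.

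Next I would unwind what injectivity of $r_0^D$ means concretely. Suppose $T\in Hom_H(H^2(H,\sigma),H^2(G,\tau))$ satisfies $r_0^D(T)=0$, i.e. $r_0\bigl(D(K_T(e,\cdot)z)\bigr)=0$ for every $z\in V_{\mu+\rho_n^\mu}^L$. By \eqref{eq:kten} the vectors $K_T(e,\cdot)z$ lie in $H^2(G,\tau)[V_\mu^H][V_{\mu+\rho_n^\mu}^L]=\mathcal L_\lambda[V_{\mu+\rho_n^\mu}^L]$, which is precisely the domain of the fixed $L$-isomorphism $D$; thus $D(K_T(e,\cdot)z)\in(\mathcal U(\mathfrak h_0)W)[V_{\mu+\rho_n^\mu}^L]\subset\mathrm{Cl}(\mathcal U(\mathfrak h_0)W)$. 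Since $r_0$ is injective on $\mathrm{Cl}(\mathcal U(\mathfrak h_0)W)$ and $D$ is a bijection, we conclude $K_T(e,\cdot)z=0$ for all $z\in V_{\mu+\rho_n^\mu}^L$. The remaining task is to upgrade this to $K_T\equiv 0$, hence $T=0$.

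For that upgrade I would exploit the equivariance identities for $K_T$ recalled in \ref{sec:kernelprop}, namely $K_T(hh_1,x)=K_T(h_1,h^{-1}x)$ and $K_T(hl,gk)=\tau(k^{-1})K_T(h,g)\sigma(l)$. Writing an arbitrary $h\in H$ and using that $\{\sigma(l)z: l\in L,\ z\in V_{\mu+\rho_n^\mu}^L\}$ spans the lowest $L$-type $Z_0:=V_{\mu+\rho_n^\mu}^L$ of $\sigma$, the vanishing $K_T(e,\cdot)z=0$ propagates to $K_T(h,\cdot)z'=0$ for all $h\in H$ and all $z'$ in that lowest $L$-type. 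But $K_T(h,\cdot)z'=K_{T^\star}(\cdot,h)^\star z'\in H^2(G,\tau)$, so for fixed $z'$ the map $h\mapsto K_T(h,\cdot)z'$ is (a component of) an element of $Hom_H(Z_0,H^2(H,\sigma))$ read through the reproducing kernel, and since $Z_0$ is a cyclic (indeed lowest) $L$-type generating $V_\mu^H$ under $H$, vanishing on $Z_0$ forces $K_T(h,g)=0$ identically; equivalently, the adjoint kernel $K_{T^\star}(\cdot,h)^\star z'=0$ for all $h$ means $T^\star$ kills the dense span of $\{K_\sigma(\cdot,h)^\star z': h\in H,\ z'\in Z_0\}=H^2(H,\sigma)$, whence $T^\star=0$ and $T=0$.

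I expect the main obstacle to be precisely this last propagation step: turning the pointwise vanishing $K_T(e,\cdot)\!\restriction_{Z_0}=0$ into global vanishing of the kernel, since one must be careful that the lowest $L$-type $Z_0\subset W$ under the embedding $Z_0=V_{\mu+\rho_n^\mu}^L$ really does generate the $H$-representation $V_\mu^H$ (this is where the Vogan-type statement that a Discrete Series is determined by, and generated by, its lowest $L$-type is used) and that the equivariance relations are applied with the correct sides (left $H$-action on the first variable of $K_T$, right $K$-action on the second). Once that cyclicity is in place, the rest is the formal manipulation of reproducing kernels together with the already-established injectivity of $r_0$ on $\mathrm{Cl}(\mathcal U(\mathfrak h_0)W)$, so the corollary follows.
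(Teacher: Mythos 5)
Your proposal is correct and follows essentially the same route as the paper: apply the previous corollary (Lemma~\ref{lem:injecrh} for the pair $(G,H_0)$) to get $D(K_T(e,\cdot)z)=0$, use injectivity of $D$ to get $K_T(e,\cdot)z=0$, and then use $K_T(h,x)=K_T(e,h^{-1}x)$ to conclude $K_T\equiv 0$, hence $T=0$. The final ``propagation'' step you flag as the main obstacle is actually immediate, since the kernel $K_T(h,x)$ is by construction a map on all of $Z=V_{\mu+\rho_n^\mu}^L$ (the full lowest $L$-type space), so vanishing for every $z\in Z$ and every $x\in G$ together with the left-translation identity already gives $K_T\equiv 0$ without any cyclicity argument.
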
 In fact,  for $T\in Hom_H(H^2(H,\sigma), H^2(G,\tau))$, if $ r_0(D(K_T(e,\cdot)z))=0 \, \forall z \in Z$, then, since $ D(K_T(e,x)z) \in \mathcal U(\mathfrak h_0)W$,  the previous corollary    implies $ D(K_T(e,x)z)=0 \, \forall z, x \in G$. Since, $ K_T(e,\cdot)z \in V_\lambda^G[V_\mu^H][V_{\mu+\rho_n^H}^L]$, and $D$ is injective we obtain $ K_T(e,x)z=0 \, \forall z, \forall x$. Lastly, we recall equality $K_T(h,x)=K_T(e,h^{-1}x)$. Whence we have verified the corollary.

Before we show the surjectivity for the map $r_0^D$ we would like to comment   on other works on the topic   of this note.

\subsection{Previous work on   duality formula and Harish-Chandra parameters} The setting for this subsection is: $(G,H)$ is a symmetric pair and $(\pi_\lambda , V_\lambda^G)$ is a irreducible square integrable representation of $G$ and $H$-admissible. As before, we fix $K,L=H\cap K, T, U=H\cap T$.
The following Theorem has been shown by \cite{GW}, a different proof is in \cite{KO}.
\begin{thm}[Gross-Wallach, T. Kobayashi-Y. Oshima]\label{prop:GWKO} We assume $(G,H)$ is symmetric pair, $\pi_\lambda^G$-is $H$-admissible,  then

\smallskip
a) $res_H(\pi_\lambda^G)$ is the Hilbert sum of {\bf inequivalent} Square integrable representations for $H$,  $\pi_{\mu_j}^H, j=1, 2, \dots$, with respective finite multiplicity $0<m_j<\infty$.

\smallskip

b) The Harish-Chandra parameters of the totality of discrete factors for $res_H(\pi_\lambda^G)$ belong to a "unique" Weyl Chamber in $i\mathfrak u^\star $.

\smallskip
 That is, \phantom{xx} $V_\lambda^G =\oplus_{1\leq j<\infty} V_\lambda^G[V_{\mu_j}^H]\equiv \oplus_{ j} \,\, Hom_H(V_{\mu_j}^H, V_\lambda^G)\otimes V_{\mu_j}^H$,

\medskip
\phantom{xxxx}  $\dim  Hom_H(V_{\mu_j}^H, V_\lambda^G)=m_j$,  \phantom{xx} $\pi_{\mu_j}^H \neq \pi_{\mu_i}^H $ iff $i\not= j$, \\   and  there exists a system of positive roots  $ \Psi_{H,\lambda} \subset \Phi(\mathfrak h,\mathfrak u),$  such that for all $j$, $(\alpha,\mu_j)>0 $\,\, for all $\alpha \in  \Psi_{H,\lambda} $.

\end{thm}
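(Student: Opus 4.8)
The plan is to dispatch (a) by a short structural argument and to concentrate the real work on the uniform-chamber assertion (b).

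\emph{Assertion (a).} By the definition of $H$-admissibility (Kobayashi), $res_H(\pi_\lambda^G)$ is a discrete Hilbert direct sum of irreducible unitary representations of $H$, each of finite multiplicity; so it remains only to see that the constituents are \emph{square integrable} and to group the equivalent ones. Now $L^2(G)$, viewed as a unitary $H$-module under left translation, decomposes along the right cosets $Hx\cong H$ as a (generally infinite) multiple of the left regular representation of $H$; since $V_\lambda^G\subset L^2(G)$ and $res_H(\pi_\lambda)$ is discrete, each irreducible constituent is a discrete summand of a multiple of $L^2(H)$, hence of $L^2(H)$ itself, i.e. a discrete series $\pi_\mu^H$ with Harish-Chandra parameter $\mu\in i\u^\star$ regular for $\Phi(\h,\u)$ (this is the argument of \cite{GW}; see also \cite{KO}). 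Collecting the constituents into isotypic blocks and invoking finiteness of multiplicities gives the displayed form with the $\pi_{\mu_j}^H$ pairwise inequivalent and $0<m_j<\infty$.

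\emph{Assertion (b).} Fix the compact Cartan subgroup $U\subset L$ and, for each $j$, the parameter $\mu_j\in i\u^\star$; we must produce one system of positive roots $\Psi_{H,\lambda}\subset\Phi(\h,\u)$ making every $\mu_j$ strictly dominant. The primary route is Kobayashi's moment-map picture: $H$-admissibility of $\pi_\lambda$ holds precisely when the asymptotic $K$-support $AS_K(\pi_\lambda)\subset i\t^\star$ meets the normal cone attached to $\h$ only at the origin, and under this hypothesis the set $\{\mu_j\}$ of $H$-infinitesimal characters is, after a fixed normalization and up to a bounded translation, contained in the image under restriction to $\u$ of the closed convex cone $C_\lambda\subset i\t^\star$ spanned by $\lambda$ together with the finitely many $\t$-weights occurring in $Spec_K(\pi_\lambda)$ (Blattner's description of the $K$-spectrum). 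Because $C_\lambda$ lies in a single Weyl chamber for $\Phi(\g,\t)$ and the restriction map is linear, its image is a convex cone that, by admissibility, is pointed and whose interior avoids every root hyperplane of $\Phi(\h,\u)$; hence it lies in one closed chamber, and combined with the regularity of the $\mu_j$ from (a) this confines all of them to the corresponding open chamber, which we name $\Psi_{H,\lambda}$. A second, more paper-intrinsic route uses the associated subgroup $H_0$: $V_{\mu_j}^H$ occurs in $res_H(\pi_\lambda)$ exactly when its lowest $L$-type $\pi^L_{\mu_j+\rho_n^{\mu_j}}$ occurs in the \emph{finite} sum of discrete series of $H_0$ packaged as $\mathbf H^2(H_0,\tau)$, so the lowest-$L$-type highest weights $\mu_j+\rho_n^{\mu_j}-\rho_c$ all lie in a finite union of cones translated along the noncompact directions of a fixed system for $\Phi(\h_0,\u)$, hence in one chamber's closure; a Blattner-type $\rho$-shift argument then transports this to the $\mu_j$ themselves.

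\emph{Main obstacle.} The crux is the uniform-chamber statement in (b). Since the sum over $j$ is typically infinite the parameters $\mu_j$ are unbounded, so no norm estimate on infinitesimal characters (for instance through the Casimir of $\h$) suffices; one genuinely needs the convexity and connectedness of the support of $res_H(\pi_\lambda)$ in $i\u^\star$ --- the Duistermaat--Heckman/Kostant-type convexity packaged into Kobayashi's admissibility machinery --- together with the fact that this support stays off the root hyperplanes of $\Phi(\h,\u)$, which in turn relies on every constituent being a genuine discrete series rather than a limit of discrete series. Once this single chamber is secured, everything else in the statement (finiteness of the $m_j$, pairwise inequivalence, and the Hilbert-sum decomposition) is formal.
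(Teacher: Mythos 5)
The paper does not prove this theorem: it is quoted from Gross--Wallach \cite{GW} (with an alternative proof in \cite{KO}), and the systems $\Psi_{H,\lambda}$ it produces are then taken as input for everything that follows. So your attempt must be judged on its own merits. Your part (a) is fine and is the standard argument: admissibility gives the discrete Hilbert sum with finite multiplicities by definition, and square integrability of each constituent follows because $L^2(G)$, as an $H$-module under left translation, is a multiple of the left regular representation of $H$; grouping into isotypic blocks gives the displayed form.

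Part (b) is where the genuine gap lies, and your own ``main obstacle'' paragraph essentially concedes it. In your primary route, the assertion that the cone $C_\lambda$ spanned by $\lambda$ and the $\mathfrak t$-weights of $Spec_K(\pi_\lambda)$ ``lies in a single Weyl chamber for $\Phi(\mathfrak g,\mathfrak t)$'' is false in general: the $K$-types sit at $\lambda+\rho_n-\rho_c$ plus arbitrary nonnegative integer combinations of the noncompact roots in $\Psi_\lambda$, and an individual noncompact positive root is typically not $\Psi_\lambda$-dominant, so $C_\lambda$ only lies in the half-space $(\lambda,\cdot)>0$, not in a chamber. More importantly, the step that actually carries the content of (b) --- that the image of this cone under $q_{\mathfrak u}$ has interior avoiding every root hyperplane of $\Phi(\mathfrak h,\mathfrak u)$ --- is asserted ``by admissibility'' with no argument; that is precisely what must be proved, and it is where both \cite{GW} (via the $A_{\mathfrak q}(\lambda)$-realization and a Blattner-type formula for $(\Gamma^{\widetilde K})^{p_0+q_0}(N(\Lambda))$) and \cite{DV} (via the explicit support $\bigcup_{w\in W_K}\bigl(q_{\mathfrak u}(w\lambda)+\mathrm{cone}(S_w^H)\bigr)$ of the multiplicity function) do real case analysis. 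Your second route is circular relative to this paper's logic: the module $\mathbf H^2(H_0,\tau)$, the positive systems $\Psi_{H,\lambda}$ and $\Psi_{H_0,\lambda}$, and the duality Theorem~\ref{prop:Rwelldefgc}/\ref{prop:equaldim} are all constructed by first invoking Theorem~\ref{prop:GWKO} for $H$ and for $H_0$, so they cannot be used to prove it. As it stands, (b) is not established.
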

  In \cite{Vaint}\cite{KO} (see Tables 1,2,3) we find the list of pairs $(\mathfrak g,\mathfrak h)$, as well as systems of positive roots $\Psi_G \subset \Phi(\mathfrak g, \mathfrak t), \Psi_{H,\lambda}\subset \Phi(\mathfrak h, \mathfrak u)$   such that,

-  $ \lambda$ dominant with respect to $ \Psi_G  $ implies $res_H(\pi_\lambda^G)$ is admissible.

- For all $\mu_j$ in $a)$ we have  $(\mu_j, \Psi_{H,\lambda})>0$.

\smallskip
- When $U=T$,  we have $\Psi_{H,\lambda}=\Psi_\lambda \cap \Phi(\mathfrak h,\mathfrak t).$

\smallskip Since $(G,H_0)$ is a symmetric pair, Theorem~\ref{prop:GWKO} as well as its comments apply to $(G,H_0)$ and $\pi_\lambda$.  Here,   when $U=T$,  $\Psi_{H_0, \lambda}=\Psi_\lambda \cap \Phi(\mathfrak h_0,\mathfrak t)$.
\medskip

 From the tables in \cite{Vaint} it follows that any of  the system  $\Psi_\lambda$,  $\Psi_{H,\lambda}, \Psi_{H_0, \lambda}$  has,  at most,   two  noncompact simple roots,  and the sum  of the respective multiplicity   of each noncompact simple root in  the highest root   is less or equal than two.

\subsubsection{Computing Harish-Chandra parameters from Theorem~\ref{prop:Rwelldefgc}}\label{sub:paramhc}
As usual,  $\rho_n=\frac12 \sum_{\beta \in \Psi_\lambda \cap \Phi_n} \beta$, $  \rho_n^H=\frac12 \sum_{\beta \in \Psi_{H,\lambda} \cap \Phi_n} \beta $, $\rho_K= \frac12 \sum_{\alpha \in \Psi_\lambda \cap \Phi_c} \alpha$, \\ $\rho_L= \frac12 \sum_{\alpha \in \Psi_{H,\lambda} \cap \Phi_c} \alpha$.   We write $res_L(\tau)=res_L(V_{\lambda+\rho_n}^K )=\oplus_{1\leq j\leq r} \,  q_j\, \pi_{\nu_j}^L=\sum_j q_j \sigma_j$, with $\nu_j$  dominant with respect to $\Psi_{H,\lambda} \cap \Phi_c$.
we recall  $\nu_j$ is the infinitesimal character (Harish-Chandra parameter) of $\pi_{\nu_j}^L$. Then, the Harish-Chandra parameter for $H^2(H_0, \pi_{\nu_j}^L)$ is $\eta_j=\nu_j -\rho_n^{H_0}$.

 According to  \cite[Lemma 2.22]{Sc}(see Remark~\ref{ktypessch}), the infinitesimal character of a $L$-type of $H^2(H_0, \pi_{\nu_j}^L)$ is equal to $\nu_j +B=\eta_j+\rho_n^{H_0}+B $ where $B$ is a sum of roots in $\Psi_{H_0,\lambda}\cap \Phi_n$.

The isomorphism $r_0^D$ in Theorem~\ref{prop:Rwelldefgc}, let us conclude:

For each subrepresentation  $V_{\mu_s}^H $ of $res_H(\pi_\lambda)$, we have $\mu_s +\rho_n^H$ is a $L$-type of \\ \phantom{xxxxxxx} $\mathbf H^2(H_0,\tau)\equiv \oplus_j\, q_j \, H^2(H_0,\pi_{\nu_j}^L)\equiv \oplus_j \,  \underbrace { V_{\eta_j}^{H_0}\oplus \cdots \oplus V_{\eta_j}^{H_0}}_{q_j}$, \\  and the multiplicity  of $V_{\mu_s}^H$ is equal to the multiplicity of $V_{\mu_s +\rho_n^H}^L$   in $\mathbf H^2(H_0,\tau)$.

\subsubsection{Gross-Wallach multiplicity formula}\label{sec:gwmultip}

To follow we describe the duality Theorem due to \cite{GW}. $(G,H)$ is a symmetric pair. For this paragraph, in order to avoid subindexes we write  $\mathfrak g=Lie(G), \mathfrak h=Lie(H) $ etc.   We recall $\mathfrak h_0=\mathfrak g^{\sigma \theta}$. We have the decompositions $\mathfrak g=\mathfrak k+\mathfrak p=\mathfrak h+\mathfrak q=\mathfrak h_0 +\mathfrak p \cap \mathfrak h + \mathfrak q \cap \mathfrak k$. The {\it dual}   real Lie algebra to $\mathfrak g$ is $\mathfrak g^d=\mathfrak h_0 +i(\mathfrak p \cap \mathfrak h + \mathfrak q \cap \mathfrak k)$, the  algebra  $\mathfrak g^d$ is a real form for  $\mathfrak g_\mathbb C$.  A  maximal compactly embedded subalgebra for $\mathfrak g^d$  is $\widetilde{\mathfrak k}=\mathfrak h \cap \mathfrak k +i(\mathfrak h \cap \mathfrak p)$. Let $\pi_\lambda$ be a $H$-admissible  Discrete Series for $G$. One of the main results of \cite{GW} attach to $\pi_\lambda$ a finite sum of the underlying Harish-Chandra module of finitely many fundamental representations  for $G^d$, $(\Gamma_{H\cap L}^{\widetilde K})^{p_0+q_0}(N(\Lambda))$,  so that for each subrepresentation $V_\mu^H$ of $V_\lambda^G$ we compute the multiplicity $m^{G,H}(\lambda, \mu)$ of $V_\mu^H$ by means of   Blattner's formula \cite{HS} applied to $(\Gamma_{H\cap L_1}^{\widetilde K})^{p_0+q_0}(N(\Lambda))$ . In more detail, since $Lie(H)_\mathbb C =Lie(\widetilde K)_\mathbb C$, and the center of $H$ is equal to the center of $\widetilde K$, for the infinitesimal character $\mu$ and the central character $\chi$ of $V_\mu^H$, we may associate a  finite dimensional irreducible representation $F_{\mu,\chi}$ for $\widetilde K$. Then, they show \begin{equation*} \dim  Hom_{\mathfrak h,H\cap K}(V_\mu^H, V_\lambda^G)=\dim  Hom_{\widetilde K}(F_{\mu,\chi}, (\Gamma_{H\cap L_1}^{\widetilde K})^{p_0+q_0}(N(\Lambda))), \end{equation*}
\begin{equation*} m^{G,H}(\lambda,\mu) =(-1)^{\frac12 \dim  ( H/H\cap L_1)} \sum_{i=1}^d \sum_{s\in W_{\widetilde K}} \epsilon(s)p(\Lambda_i +\rho_{\widetilde K} +ss_{ H\cap K } \mu), \end{equation*}
where $\tau=F^\Lambda=\sum_i M^{\Lambda_i}$ as a sum of irreducible $H\cap L_1$-module and $p$ is the partition function associated to $\Phi(\mathfrak u_1 /\mathfrak u_1 \cap\mathfrak h_\mathbb C, \mathfrak u)$, here, $\mathfrak u_1$ is the nilpotent radical of  certain parabolic subalgebra $\mathfrak q=\mathfrak l_1 +\mathfrak u_1$ used to define the $A_\mathfrak q(\lambda)$-presentation for $\pi_\lambda$.  {\it Explicit example IV} presents the result of \cite{GW} for the pair $(SO(2m,2n), SO(2m,2n-1))$.

\subsubsection{Duflo-Vargas multiplicity formula,  \cite{DV}}   We keep notation and hypothesis as in the previous paragraph. Then, \begin{equation*} m^{G,H}(\lambda,\mu)=\pm \sum_{w \in W_K} \epsilon (w)p_{S_w^H}(\mu -q_\mathfrak u(w\lambda)).\end{equation*}
Here, $q_\mathfrak u :\mathfrak t^\star \rightarrow \mathfrak u^\star $ is the restriction map. $p_{S_w^H}$ is the partition function associated to the multiset \begin{equation*} S_w^H :=S_w^L \backslash \Phi(\mathfrak h/\mathfrak l,\mathfrak u), \,\mathrm{where,} \,\, S_w^L:=q_\mathfrak u(w(\Psi_\lambda)_n)\cup \Delta(\mathfrak k/\mathfrak l, \mathfrak u). \end{equation*}

We recall for a strict multiset of elements in vector space $V$  the partition function   attached to $S$, roughly speaking,  is the function that counts the number of ways of expressing each vector as a  nonnegative integral linear combinations of elements of $S$. For a precise definition see \cite{DV} or the proof of Lemma~\ref{lem:ddzhfinite}.
\subsubsection{Harris-He-Olafsson multiplicity formula, \cite{HHO}}
Notation and hypothesis as in the previous paragraphs. Let \begin{equation*} r_m : H^2(G,\tau)\rightarrow L^2(H \times_{S^m(Ad)\boxtimes \tau} (S^m(\mathfrak p \cap \mathfrak q)^\star \otimes W)). \end{equation*} the normal derivative map defined in \cite{OV1}.  Let $\Theta_{\pi_\mu^H}$ denote the Harish-Chandra character of $\pi_\mu^H$.   For $f$ a tempered function in $H^2(G,\tau)$, they define $\phi_{\pi_\lambda, \pi_\mu^H, m}(f)=\Theta_{\pi_\mu^H}
\star r_m(f)$ . They show: \begin{equation*} m^{G,H}(\lambda, \mu)= \lim_{m\rightarrow \infty} \dim  \phi_{\pi_\lambda, \pi_\mu^H, m}((H^2(G,\tau)\cap \mathcal C(G,\tau)) [V_{\mu +\rho_n^H}]). \end{equation*}

 \subsection{Completion of the Proof of Theorem~\ref{prop:Rwelldefgc}, the map $r_0^D$ is surjective }
Item $i)$ in Theorem~\ref{prop:Rwelldefgc} is shown in Proposition~\ref{prop:gentau} $c)$. The existence of the map $D$ is shown in Proposition~\ref{prop:gentau} $e)$.

To show the surjectivity of   $r_0^D$ we appeal to Theorem~\ref{prop:equaldim},  \cite[Theorem 1]{Vaint}, where we   show the initial space and the target space are equidimensional, linear algebra concludes  de proof of  Theorem~\ref{prop:Rwelldefgc}. Thus, we conclude the proof of Theorem~\ref{prop:Rwelldefgc} as soon as we complete the proof of Theorem~\ref{prop:equaldim} and Proposition~\ref{prop:gentau}.

\section{Duality Theorem, proof of dimension equality}\label{sec:equaldim}

The purpose of this subsection is to sketch  a proof of  the equality of dimensions in the duality formula presented in Theorem~\ref{prop:Rwelldefgc} as well as some consequences.     Part of the notation has already been introduced in the previous section.   Sometimes notation will be explained after it has been used.  Unexplained notation   is as in \cite{DV}, \cite{OV2}, \cite{Vaint}.

\subsection{Dimension equality Theorem, statement}
The setting is as follows, $(G,H)$ is a symmetric pair,$(\pi_\lambda, V_\lambda^G)=(L,H^2(G,\tau))$ a $H$-admissible irreducible square integrable representation. Then,  the Harish-Chandra parameter $\lambda$ gives rise to systems of positive roots $\Psi_\lambda $ in $\Phi(\mathfrak g,\mathfrak t)$ and by mean of $\Psi_\lambda$,  in \cite{DV} is defined a nontrivial normal connected subgroup $K_1(\Psi_\lambda)=:K_1$ of $K$, and,  it is shown that the $H$-admissibility yields $K_1 \subset H$\footnote{This also follows from the tables in \cite{KO}}.  Thus,   $\mathfrak k=\mathfrak k_1 \oplus \mathfrak k_2$, $\mathfrak l = \mathfrak k_1 \oplus  \mathfrak l \cap \mathfrak k_2$ (as ideals), and $\mathfrak t=\mathfrak t \cap \mathfrak k_1 + \mathfrak t \cap \mathfrak k_2$, $\mathfrak u:=\mathfrak t\cap \mathfrak l =\mathfrak u \cap \mathfrak k_1 + \mathfrak u \cap \mathfrak k_2$ is a Cartan  subalgebra of $\mathfrak l$. Let $q_\mathfrak u$ denote restriction map from $\mathfrak t^\star$ onto $\mathfrak u^\star$.  Let $K_2$ denote the analytic subgroup corresponding to $\mathfrak k_2$. We recall   $H_0:=(G^{\sigma \theta})_0$, $L =K\cap H =K\cap H_0$. We have $K=K_1 K_2$,  $L=K_1 (K_2\cap L)$.  We set    $\Delta:=\Psi_\lambda \cap \Phi(\mathfrak k,\mathfrak t)$. Applying Theorem~\ref{prop:GWKO} to both $H$ and $H_0$ we obtain respective systems of positive roots  $\Psi_{H,\lambda}$ in $\Phi(\mathfrak h, \mathfrak u)$, $\Psi_{H_0,\lambda}$ in $\Phi(\mathfrak h_0, \mathfrak u)$.  For a list of  six-tuples $(G,H,  \Psi_\lambda,$ $ \Psi_{H,\lambda}, \Psi_{H_0,\lambda}, K_1)$ we refer to \cite[Table 1, Table 2, Table 3]{Vaint}. Always, $ \Psi_{H,\lambda}\cap \Phi_c(\mathfrak l, \mathfrak u)=  \Psi_{H_0,\lambda}\cap \Phi_c(\mathfrak l, \mathfrak u)$. As usual, either $\Phi_n (\mathfrak g, \mathfrak t)$ or $\Phi_n$ denotes the subset of noncompact roots in $\Phi(\mathfrak g,\mathfrak t)$, $\rho_n^\lambda$ (resp. $\rho_n^H, \rho_n^{H_0} $) denotes one half of the sum of the elements in $\Psi_\lambda \cap \Phi_n (\mathfrak g, \mathfrak t)$ (resp. $\Phi_n \cap \Psi_{H,\lambda}, \Phi_n \cap \Psi_{H_0,\lambda} $). When $\mathfrak u=\mathfrak t,  \rho_n^\lambda=  \rho_n^H + \rho_n^{H_0} $.   From now on, the infinitesimal character   of an irreducible representation of $K$ (resp $L$) is dominant with respect to $\Delta$ (resp. $\Psi_{H,\lambda} \cap \Phi(\mathfrak l, \mathfrak u))$. The lowest $K$-type $(\tau,W)$ of $\pi_\lambda$ decomposes  $\pi_{\lambda +\rho_n^\lambda}^K =\pi_{\Lambda_1}^{K_1}\boxtimes \pi_{\Lambda_2}^{K_2}$, with $\pi_{\Lambda_s}^{K_s}$ an irreducible representation for $K_s, s=1,2$. We express $\gamma=(\gamma_1, \gamma_2)\in  \mathfrak t^\star=\mathfrak t_1^\star +\mathfrak t_2^\star $. Hence, \cite{HS}\cite{DHV}, $\Lambda_1 =\lambda_1 +(\rho_n^\lambda)_1 , \Lambda_2 =\lambda_2 +(\rho_n^\lambda)_2$.  Sometimes $(\rho_n^\lambda)_2 \not=0$. This happens only for $\mathfrak{su}(m,n)$ and some particular systems $\Psi_\lambda$ (see proof of Lemma~\ref{lem:equalm}).  Harish-Chandra parameters for the irreducible factors of  either $res_H(\pi_\lambda)$ (resp. $res_{H_0}(\pi_\lambda)$))  will always be dominant with respect to $\Psi_{H,\lambda} \cap \Phi (\mathfrak l, \mathfrak u)$ (resp. $\Psi_{H_0,\lambda} \cap \Phi (\mathfrak l, \mathfrak u))$.

 For short, we write  $\pi_{\Lambda_2} :=\pi_{\Lambda_2}^{K_2}$.
  We write $$ res_{L\cap K_2}(\pi_{\Lambda_2})= res_{L\cap K_2}(\pi_{\Lambda_2}^{K_2})= \sum_{\nu_2 \in (\mathfrak u \cap \mathfrak k_2)^\star}  \, m^{K_2, L\cap K_2}(\Lambda_2, \nu_2) \, \pi_{\nu_2}^{L\cap K_2},$$ as a sum of irreducible representations of $L\cap K_2$.\\
  The set of $\nu_2$ so that   $ m^{K_2, L\cap K_2}(\Lambda_2, \nu_2)\not= 0$ is denoted by $Spec_{L\cap K_2}(\pi_{\Lambda_2}^{K_2})$.  Thus,
\begin{equation} \label{eq:resKL1} res_{L}(\pi_{\Lambda_1}^{K_1} \boxtimes \pi_{\Lambda_2}^{K_2})= \sum_{\nu_2 \in Spec_{L\cap K_2}(\pi_{\Lambda_2}^{ K_2})}  \, m^{K_2, L\cap K_2}(\Lambda_2, \nu_2) \, \pi_{\Lambda_1}^{ K_1}\boxtimes \pi_{\nu_2}^{L\cap K_2}, \end{equation} as a sum of irreducible representations of $L$. Besides, for a Harish-Chandra parameter $\eta=(\eta_1,\eta_2)$ for $H_0$, we write
$$  res_L( \pi_{(\eta_1,\eta_2)}^{H_0})= \sum_{(\theta_1,\theta_2) \in Spec_L(\pi_{(\eta_1,\eta_2)}^{H_0}) } \, m^{H_0, L}( (\eta_1,\eta_2),(\theta_1,\theta_2) ) \, \pi_{(\theta_1,\theta_2)}^{L}.$$ The restriction of $\pi_\lambda$ to $H$ is expressed by (see \ref{prop:GWKO})

$$ res_H(\pi_{\lambda})= res_H( \pi_{\lambda}^{G})= \sum_{\mu \in    Spec_H(\pi_\lambda) } \, m^{G, H}(\lambda, \mu) \, \pi_{\mu}^{H}.$$
In the above formulaes, $m^{\cdot,\cdot }(\cdot,\cdot)$ are non negative  integers and represent multiplicities; for $\nu_2 \in Spec_{L\cap K_2}(\pi_{\Lambda_2}^{K_2})$, $\nu_2$ is dominant with respect to $\Psi_{H,\lambda} \cap \Phi(\mathfrak k_2, \mathfrak u \cap \mathfrak k_2)$, and  $(\Lambda_1,\nu_2)$ is $\Psi_{H_0,\lambda}$-dominant  (see \cite{Vaint}); in the third formulae,  $(\eta_1, \eta_2)$ is dominant with respect to $\Psi_{H_0,\lambda} $ and  $(\theta_1, \theta_2)$ is dominant with respect to $\Psi_{H_0,\lambda} \cap \Phi_c(\mathfrak h_0, \mathfrak u )$; in the fourth formula, $\mu$ is dominant with respect to $\Psi_{H,\lambda}$. Sometimes, for $\mu \in Spec_{H}(\pi_{\lambda}^{G})$, we
 replace $\rho_n^\mu$   by $\rho_n^H$.

We make a change of notation:

\phantom{xxxxxxxxxxx}$\sigma_j=\pi_{\Lambda_1}^{K_1}\boxtimes \pi_{\nu_2}^{L\cap K_2}$ and $q_j= m^{K_2, L\cap K_2}(\Lambda_2, \nu_2)$.\\
Then, in order to show either the existence of the map $D$ or the surjectivity of the map $r_0^D$,  we need to show:

\begin{thm} \label{prop:equaldimeigensrea}  \begin{eqnarray*} \lefteqn{ m^{G, H}(\lambda, \mu)=\dim  Hom_H(H^2(H,V_{\mu+\rho_n^H}^L), H^2(G,V_{\lambda +\rho_n^\lambda}^K))  } \\  &  &   \hspace{1.0cm} \mbox{ }\mbox{ }  =  \sum_{\nu_2 \in Spec_{L\cap K_2}(\pi_{\Lambda_2}^{K_2}) }  \, m^{K_2, L\cap K_2}(\Lambda_2, \nu_2) \\  &   & \hspace{4.2cm}\times \dim  Hom_L(  V_{\mu +\rho_n^H}^L,    H^2(H_0, \pi_{\Lambda_1}^{K_1}\boxtimes \pi_{ \nu_2}^{L\cap K_2}))  .\end{eqnarray*}

\end{thm}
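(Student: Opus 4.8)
The plan is to prove the chain of equalities in Theorem~\ref{prop:equaldimeigensrea} by combining the duality of Theorem~\ref{prop:GWKO} applied to the two symmetric pairs $(G,H)$ and $(G,H_0)$ with a careful bookkeeping of $L$-types, using the fact (established in the preceding subsection, e.g. via \cite{DV}, \cite{Kob1}, \cite{Kob}) that the $H$-admissibility of $\pi_\lambda$ forces $K_1 \subset H \cap H_0$ and that $\pi_\lambda$ is simultaneously $L$-, $H$- and $H_0$-admissible. The first equality, $m^{G,H}(\lambda,\mu)=\dim \mathrm{Hom}_H(H^2(H,V_{\mu+\rho_n^H}^L),H^2(G,V_{\lambda+\rho_n^\lambda}^K))$, is essentially the definition of the multiplicity together with the realization of Discrete Series in reproducing kernel Hilbert spaces and the admissible decomposition in Theorem~\ref{prop:GWKO}(a); I would just record it.

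\smallskip
The substance is the second equality. The guiding idea is that, since $K = K_1 K_2$ with $K_1 \subset H$ acting on the $K_1$-factor $\pi_{\Lambda_1}^{K_1}$ and the branching $G \downarrow H$ or $G\downarrow H_0$ only ``sees'' the $K_2$-direction nontrivially, one should be able to transfer the computation of $m^{G,H}(\lambda,\mu)$ to a computation inside $H_0$. Concretely, I would first use the duality for $(G,H_0)$: there is a combinatorial formula (Duflo--Vargas, as quoted, or the Blattner-type expression) for $m^{G,H}(\lambda,\mu)$ and $m^{G,H_0}(\lambda,\eta)$ in terms of partition functions associated to $S_w^H$ resp. $S_w^{H_0}$, and the relation $S_w^H = S_w^{H_0}$ modulo the roots of $\mathfrak h_0/\mathfrak l$ versus $\mathfrak h/\mathfrak l$. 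Then I would decompose $res_L(\tau)$ as in \eqref{eq:resKL1}, so that $\sigma_j = \pi_{\Lambda_1}^{K_1}\boxtimes\pi_{\nu_2}^{L\cap K_2}$ with multiplicity $q_j = m^{K_2,L\cap K_2}(\Lambda_2,\nu_2)$; the right-hand side of the claimed identity is exactly $\sum_j q_j \dim\mathrm{Hom}_L(V_{\mu+\rho_n^H}^L, H^2(H_0,\sigma_j))$, i.e. the $V_{\mu+\rho_n^H}^L$-multiplicity in $\mathbf H^2(H_0,\tau)$. So the claim reduces to: the $L$-type $V_{\mu+\rho_n^H}^L$ occurs in $\mathbf H^2(H_0,\tau)$ with multiplicity equal to $m^{G,H}(\lambda,\mu)$. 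For this I would invoke Schmid's result \cite{Sc} (Lemma 2.22, cited as Remark~\ref{ktypessch}) describing the $L$-types of $H^2(H_0,\pi_{\nu_j}^L)$ as $\nu_j + (\text{sum of noncompact } \Psi_{H_0,\lambda}\text{-roots})$, together with the Hecht--Schmid / Blattner multiplicity formula \cite{HS}: this turns $\dim\mathrm{Hom}_L(V_{\mu+\rho_n^H}^L, \mathbf H^2(H_0,\tau))$ into a partition-function sum in the roots $\Psi_{H_0,\lambda}\cap\Phi_n$, which I would then match term-by-term with the Duflo--Vargas partition-function expression for $m^{G,H}(\lambda,\mu)$, using $\Psi_{H,\lambda}\cap\Phi_c(\mathfrak l,\mathfrak u) = \Psi_{H_0,\lambda}\cap\Phi_c(\mathfrak l,\mathfrak u)$ and the decomposition of the multiset $S_w^H$ relative to $S_w^{H_0}$.

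\smallskip
An alternative, more structural route that I would pursue in parallel (and which is cleaner to present): build the isomorphism directly. By Lemma~\ref{lem:injecrh} applied to $(G,H_0)$, the restriction map $r_0$ is injective on $\mathrm{Cl}(\mathcal U(\mathfrak h_0)W)$; by part $i)$ of Theorem~\ref{prop:Rwelldefgc} (proved separately in Proposition~\ref{prop:gentau}) it carries $\mathrm{Cl}(\mathcal U(\mathfrak h_0)W)$ isomorphically, as an $(\mathfrak h_0,L)$-module, onto $\mathbf H^2(H_0,\tau)$. On the other hand $\mathcal U(\mathfrak h_0)W$ is $L$-equivalent to $\mathcal L_\lambda = \bigoplus_{\mu}H^2(G,\tau)[V_\mu^H][V_{\mu+\rho_n^\mu}^L]$ (this is \cite[(4.5)]{Vaint}, reproved in Proposition~\ref{prop:gentau}). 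Hence the $V_{\mu+\rho_n^H}^L$-isotypic multiplicity in $\mathbf H^2(H_0,\tau)$ equals $\dim H^2(G,\tau)[V_\mu^H][V_{\mu+\rho_n^\mu}^L] = m^{G,H}(\lambda,\mu)\cdot\dim\mathrm{Hom}_L(V_{\mu+\rho_n^\mu}^L,V_\mu^H) = m^{G,H}(\lambda,\mu)$, the last step because the lowest $L$-type of a Discrete Series of $H$ occurs with multiplicity one (Vogan/Blattner). Expanding $\mathbf H^2(H_0,\tau)=\bigoplus_j q_j H^2(H_0,\sigma_j)$ then yields precisely the displayed sum. The main obstacle, and the step I expect to need the most care, is the passage between the $L$-type combinatorics on the $H_0$-side and the $H$-multiplicity on the $G$-side when $(\rho_n^\lambda)_2 \neq 0$ — i.e. the $\mathfrak{su}(m,n)$ exceptional systems $\Psi_\lambda$ — where the naive identification of lowest $L$-types shifts, and where one must track the $\rho_n^\mu$ versus $\rho_n^H$ discrepancy; here I would fall back on the explicit tables \cite[Tables 1,2,3]{Vaint} and the case analysis in the proof of Lemma~\ref{lem:equalm} to verify the matching, rather than attempt a uniform argument.
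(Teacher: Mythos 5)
Your first route is essentially the paper's own proof: the paper establishes the second equality by combining (i) a distributional form of the Duflo--Vargas formula, where the factorization $W_K=W_{K_1}\times W_{K_2}$ with $K_1\subset L$ and the identity $S_w^H=s(\Psi_n^{H_0})\cup\Delta(\mathfrak k/\mathfrak l,\mathfrak u)$ convert $m^{G,H}(\lambda,\mu)$ into the composite branching $K_2\downarrow L\cap K_2$ followed by $H_0\downarrow L$ (Lemma~\ref{lem:ddzhfinite}); (ii) a Blattner/Hecht--Schmid shift lemma $m^{H_0,L}((\lambda_1,\nu_2')+\rho_n^H,\mu+\rho_n^H)=m^{H_0,L}((\lambda_1,\nu_2'),\mu)$, whose proof requires extending \cite[Statement 4.31]{HS} by case analysis exactly for the non--Borel-de-Siebenthal systems you single out (Lemma~\ref{lem:multshift}); and (iii) the $(\rho_n^\lambda)_2$-shift identity for $Spec_{L\cap K_2}$ (Lemma~\ref{lem:equalm}). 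Your outline identifies all three ingredients, including the need to fall back on the tables for the $\mathfrak{su}(m,n)$ exceptional systems, so on this route you are on the paper's track.

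The ``alternative, more structural route'' you propose in parallel is, however, circular in the logical organization of this paper and cannot serve as the proof of Theorem~\ref{prop:equaldimeigensrea}. You invoke Theorem~\ref{prop:Rwelldefgc}(i) and Proposition~\ref{prop:gentau} (the $H_0$-isomorphism $r_0:\mathrm{Cl}(\mathcal U(\mathfrak h_0)W)\to\mathbf H^2(H_0,\tau)$ and the $L$-equivalence $\mathcal L_\lambda\simeq\mathcal U(\mathfrak h_0)W$), but the paper proves precisely these statements \emph{from} Theorem~\ref{prop:equaldim}: parts a), b) and d) of Proposition~\ref{prop:gentau} each begin by applying the dimension equality (e.g.\ ``Theorem~\ref{prop:equaldim} implies $V_{\mu+\rho_n^H}^L$ is a $L$-type for $\mathbf H^2(H_0,\tau)$'' and the multiplicity computation $\dim Hom_H(V_\lambda^G,H^2(H,\sigma_j))=\sum_i q_i\dim Hom_L(\sigma_j,H^2(H,\sigma_i))=q_j$), and the existence of $D$ (part e)) rests on d). So the identity $\dim Hom_L(V_{\mu+\rho_n^H}^L,\mathbf H^2(H_0,\tau))=m^{G,H}(\lambda,\mu)$ that you extract from the structural route is not an independent derivation but a restatement of the theorem being proved. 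If you want a genuinely independent structural proof you would need to establish the $L$-equivalence $\mathcal L_\lambda\simeq\mathcal U(\mathfrak h_0)W$ and the surjectivity of $r_0$ onto $\mathbf H^2(H_0,\tau)$ without appealing to the dimension count, which is not done here (nor in \cite{Vaint}); the injectivity from Lemma~\ref{lem:injecrh} alone does not suffice. Keep the first route as the proof and demote the second to a corollary.
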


A complete proof of the  result is in \cite{Vaint}. However, for sake of completeness and clarity  we would like to sketch a proof. We also present some consequences of the Theorem.

Next, we compute the infinitesimal character, $ic(...)$,   equivalently the Harish-Chandra parameter, for $  H^2(H_0, \pi_{\Lambda_1}^{K_1}\boxtimes \pi_{ \nu_2}^{L\cap K_2})$ and  restate the previous Theorem.

 \begin{equation*}\begin{split} & ic( H^2(H_0, \pi_{\Lambda_1}^{K_1}\boxtimes \pi_{ \nu_2}^{L\cap K_2}) )=(\Lambda_1,\nu_2)-\rho_n^{H_0} \\   & =(\lambda_1 +(\rho_n^{\lambda})_1-(\rho_n^{H_0})_1,\nu_2 -(\rho_n^{H_0})_2)  =(\lambda_1 +(\rho_n^{H})_1,\nu_2-(\rho_n^{H_0})_2)  \\   & =(\lambda_1, \nu_2 -(\rho_n^{H})_2  -(\rho_n^{H_0})_2  )  +\rho_n^H  =(\lambda_1, \nu_2 -q_\mathfrak u(\rho_n^{\lambda})_2\, )  +\rho_n^H. \end{split}\end{equation*}

To follow, we state Theorem~\ref{prop:equaldimeigensrea} regardless of the realization of the involved Discrete Series.
   \begin{thm} \label{prop:equaldim} Duality, dimension formula. The hypothesis is $(G,H)$ is a symmetric pair and $\pi_\lambda$ is a $H$-admissible representation. Then,  \begin{equation*}\begin{split}& m^{G, H}(\lambda, \mu)=\dim  Hom_H(V_{\mu }^H, V_\lambda^G)     \\  &  =  \sum_{\nu_2 \in Spec_{L\cap K_2}(\pi_{\Lambda_2}^{K_2}) }  \, m^{K_2, L\cap K_2}(\Lambda_2, \nu_2)     \dim  Hom_L \big(  V_{\mu +\rho_n^H}^L,    V_{(\lambda_1, \nu_2-q_\mathfrak u (\rho_n^\lambda)_2)+\rho_n^H}^{H_0}\big) \\ &  =  \sum_{\nu_2^\prime \in Spec_{L\cap K_2}(\pi_{\Lambda_2}^{K_2})-q_\mathfrak u (\rho_n^\lambda)_2  }  \, m^{K_2, L\cap K_2}(\Lambda_2, \nu_2^\prime +q_\mathfrak u (\rho_n^\lambda)_2 )  \\ &  \phantom{xxxxxxxxxxxxxxxxxxxxxxxxxxxxxxxxxxxxx} \times \dim  Hom_L\big(  V_{\mu +\rho_n^H}^L,    V_{(\lambda_1, \nu_2^\prime) +\rho_n^H}^{H_0}\big) .\end{split} \end{equation*}  \end{thm}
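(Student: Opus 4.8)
The plan is to reduce Theorem~\ref{prop:equaldim} to Theorem~\ref{prop:equaldimeigensrea} by tracking the infinitesimal characters, and then to prove the latter via a two-step branching. The first step records the observation already made in the excerpt: the lowest $K$-type $(\tau,W) = \pi_{\Lambda_1}^{K_1}\boxtimes\pi_{\Lambda_2}^{K_2}$ branched to $L$ is given by \eqref{eq:resKL1}, so the $\sigma_j$ appearing in $\mathbf H^2(H_0,\tau)$ are exactly the $\pi_{\Lambda_1}^{K_1}\boxtimes\pi_{\nu_2}^{L\cap K_2}$ with multiplicities $q_j = m^{K_2,L\cap K_2}(\Lambda_2,\nu_2)$; this uses the fact (from \cite{DV}, cited after Theorem~\ref{prop:GWKO}) that $K_1 \subset H \cap H_0$ and that $\pi_{\Lambda_1}^{K_1}$ is common to both the $K_1$ and $L$ pictures. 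Granting Theorem~\ref{prop:Rwelldefgc}~$i)$ — which identifies $\mathrm{Cl}(\mathcal U(\mathfrak h_0)W)$ with $\mathbf H^2(H_0,\tau) = \oplus_j q_j\, H^2(H_0,\sigma_j)$ as $H_0$-modules — and the displayed computation $ic(H^2(H_0,\pi_{\Lambda_1}^{K_1}\boxtimes\pi_{\nu_2}^{L\cap K_2})) = (\lambda_1,\nu_2 - q_\mathfrak u(\rho_n^\lambda)_2) + \rho_n^H$, the passage from Theorem~\ref{prop:equaldimeigensrea} to Theorem~\ref{prop:equaldim} is a substitution; the reindexing to the $\nu_2'$ form is the change of variables $\nu_2' = \nu_2 - q_\mathfrak u(\rho_n^\lambda)_2$.

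The substance, therefore, is Theorem~\ref{prop:equaldimeigensrea}, i.e. the identity
\[
m^{G,H}(\lambda,\mu) = \sum_{\nu_2} m^{K_2,L\cap K_2}(\Lambda_2,\nu_2)\,\dim\operatorname{Hom}_L\big(V_{\mu+\rho_n^H}^L,\, H^2(H_0,\pi_{\Lambda_1}^{K_1}\boxtimes\pi_{\nu_2}^{L\cap K_2})\big).
\]
Here I would invoke Theorem~\ref{prop:Rwelldefgc} — once it is established — since its isomorphism $r_0^D$ already equates $m^{G,H}(\lambda,\mu) = \dim\operatorname{Hom}_H(V_\mu^H,V_\lambda^G)$ with $\dim\operatorname{Hom}_L(V_{\mu+\rho_n^\mu}^L, \mathbf H^2(H_0,\tau))$, and then expand $\mathbf H^2(H_0,\tau) = \oplus_j q_j H^2(H_0,\sigma_j)$ and compute the $L$-multiplicity of $V_{\mu+\rho_n^H}^L$ additively over $j$. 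But to avoid circularity (Theorem~\ref{prop:equaldim} is used to prove surjectivity of $r_0^D$), the honest route — the one taken in \cite{Vaint} — is independent: start from the Gross--Wallach multiplicity formula in \ref{sec:gwmultip}, or equivalently the Duflo--Vargas formula $m^{G,H}(\lambda,\mu) = \pm\sum_{w\in W_K}\epsilon(w)p_{S_w^H}(\mu - q_\mathfrak u(w\lambda))$, and do the same computation for the right-hand side using $m^{K_2,L\cap K_2}$ (an ordinary compact branching multiplicity, again a signed sum of partition functions over $W_{K_2}$) and Blattner's formula for $\dim\operatorname{Hom}_L(V_{\mu+\rho_n^H}^L, H^2(H_0,\sigma_j))$ (Hecht--Schmid, \cite{HS}). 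One then checks that both sides are the same alternating sum of partition functions; the key combinatorial point is that the multiset $S_w^H = q_\mathfrak u(w(\Psi_\lambda)_n)\cup\Delta(\mathfrak k/\mathfrak l,\mathfrak u)\setminus\Phi(\mathfrak h/\mathfrak l,\mathfrak u)$ factors, via the decomposition $\mathfrak u = \mathfrak u\cap\mathfrak k_1 + \mathfrak u\cap\mathfrak k_2$ and $W_K = W_{K_1}\times W_{K_2}$, into a $\mathfrak k_1$-part that matches the $K_2$-to-$(L\cap K_2)$ branching data on the $\nu_2$ side and an $\mathfrak h_0$-part that matches the Blattner partition function for $H^2(H_0,\cdot)$. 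The role of the shift $q_\mathfrak u(\rho_n^\lambda)_2$ is exactly to absorb the discrepancy between the $\rho_n$ normalization inside $G$ and the $\rho_n^{H_0}$ normalization inside $H_0$ on the $\mathfrak k_2$-component, which is why it appears only for $\mathfrak{su}(m,n)$.

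The main obstacle is this combinatorial matching of alternating sums of partition functions: one must verify that the Weyl-group sum and the support of the partition function on the $G$-side reorganize exactly into the product of a compact branching sum (for $K_2 \downarrow L\cap K_2$) and a Blattner sum (for $H_0 \downarrow L$). Concretely this requires the root-system facts tabulated in \cite[Tables 1--3]{Vaint} — in particular $\Psi_{H,\lambda}\cap\Phi_c(\mathfrak l,\mathfrak u) = \Psi_{H_0,\lambda}\cap\Phi_c(\mathfrak l,\mathfrak u)$, the inclusion $K_1\subset H\cap H_0$, and the identity $\rho_n^\lambda = \rho_n^H + \rho_n^{H_0}$ when $\mathfrak u=\mathfrak t$ — together with careful bookkeeping of the $(\rho_n^\lambda)_2$ correction in the cases where it is nonzero. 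I would carry this out by treating the generic case $\mathfrak u=\mathfrak t$ first, where $q_\mathfrak u$ is the identity and the partition-function supports split cleanly, and then handle the general $\mathfrak u\subsetneq\mathfrak t$ case by the same argument with $q_\mathfrak u$ inserted, citing Schmid's \cite[Lemma 2.22]{Sc} for the statement that the $L$-types of $H^2(H_0,\pi_{\nu_j}^L)$ are precisely $\nu_j + (\text{sums of roots in }\Psi_{H_0,\lambda}\cap\Phi_n)$, which is what makes the Blattner partition function on the $H_0$-side literally the partition function of $\Phi_n\cap\Psi_{H_0,\lambda}$.
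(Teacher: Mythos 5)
Your plan follows the paper's own route: the paper proves the formula by combining (i) a Dirac-delta/partition-function computation starting from the Duflo--Vargas formula, factoring $W_K=W_{K_1}\times W_{K_2}$ and the multiset $S_w^H$ into the $K_2\downarrow L\cap K_2$ branching sum and the Blattner-type sum for $H_0\downarrow L$ (Lemma~\ref{lem:ddzhfinite}), together with (ii) the shift identity for $\Lambda_2=\lambda_2+(\rho_n^\lambda)_2$ isolating the $\mathfrak{su}(m,n)$ cases (Lemma~\ref{lem:equalm}), essentially as you describe. The one step your sketch passes over too quickly is the $\rho_n^H$-translation invariance $m^{H_0,L}((\lambda_1,\nu_2')+\rho_n^H,\mu+\rho_n^H)=m^{H_0,L}((\lambda_1,\nu_2'),\mu)$ needed to put the answer in the stated form: the paper devotes Lemma~\ref{lem:multshift} to it, and its proof hinges on the fact that only $s\in W_U(\Psi_{H_0,\lambda})$ (which fixes $\rho_n^H$) contribute to Blattner's sum --- Hecht--Schmid's Statement 4.31 gives this only for Borel--de Siebenthal systems, so the remaining systems for $\mathfrak{su}(m,n)$ and $\mathfrak{so}(2m,2n)$ must be handled by a separate case-by-case argument.
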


{\it Remark:} In the proof of Lemma~\ref{lem:equalm} we obtain that $(\rho_n^\lambda)_2\not= 0$ forces $\mathfrak g=\mathfrak{su}(m,n)$, $\mathfrak u=\mathfrak t$ and $(\rho_n^\lambda)_2$ is orthogonal to $(\mathfrak z_\mathfrak k)^\perp$. That is, $(\rho_n^\lambda)_2 \in \mathfrak z_\mathfrak k^\star  $, i.e., for every symmetric pair $(G,H)$ we have $(\rho_n^\lambda)_2$ determines a central character of $\mathfrak k$.   It is also verified that since $\Lambda_2 = (\rho_n^\lambda)_2+ \lambda_2$, we have $Spec_{L\cap K_2}(\pi_{\Lambda_2}^{K_2})=q_\mathfrak u(\rho_n^\lambda)_2 + Spec_{L\cap K_2}(\pi_{\lambda_2}^{K_2})$ and for $Spec_{L\cap K_2}(\pi_{\Lambda_2}^{K_2}) \ni \nu_2=q_\mathfrak u(\rho_n^\lambda)_2 +\nu_2^\prime, \nu_2^\prime \in  Spec_{L\cap K_2}(\pi_{\lambda_2}^{K_2})$ it holds the equality $ m^{K_2, L\cap K_2}(\Lambda_2, \nu_2)= m^{K_2, L\cap K_2}(\lambda_2, \nu_2^\prime)$. Finally, Lemma~\ref{lem:multshift} yields the formula in Theorem~\ref{prop:equaldim} is equivalent to the formula
$$ m^{G, H}(\lambda, \mu) = \sum_{\nu_2^\prime \in Spec_{L\cap K_2}(\pi_{\lambda_2}^{K_2}) } m^{K_2 , L\cap K_2}(\lambda_2, \nu_2^\prime) \dim  Hom_L (V_\mu^L, V_{(\lambda_1, \nu_2^\prime)}^{H_0}).$$
This formula is the one we show in Lemma~\ref{lem:ddzhfinite}.

 \medskip
 The following diagram helps to understand the equalities in   Theorem~\ref{prop:equaldim} and in the next three Lemmas.

\medskip

\xymatrix{
Spec_H( V_{(\lambda_1, \lambda_2)}^G ) \ar[r]^-{\mu \mapsto \mu} \ar[dr]_{\nu \mapsto \nu+\rho_n^H}
&  \cup_{\nu_2^\prime \in Spec_{L\cap K_2}(\pi_{\lambda_2}^{K_2})} Spec_L(V_{(\lambda_1,\nu_2^\prime)}^{H_0}) \ar[d]^{\nu \mapsto \nu+\rho_n^H}  \\
   & Spec_L(\mathbf H^2(H_0,\tau))= \cup_{\nu_2^\prime \in Spec_{L\cap K_2}(\pi_{\lambda_2}^{K_2})} Spec_L(V_{(\lambda_1, \nu_2^\prime) +\rho_n^H}^{H_0}) }

\medskip

\smallskip
A consequence of Theorem~\ref{prop:equaldim},  Lemma~\ref{lem:multshift} and  Lemma~\ref{lem:ddzhfinite} is:

\begin{cor}
  \begin{multline*}Spec_H( \pi_\lambda) +\rho_n^H \\ =Spec_L(\mathbf H^2(H_0,\tau))= \cup_{\nu_2^\prime \in Spec_{L\cap K_2}(\pi_{\lambda_2}^{K_2})} Spec_L(V_{(\lambda_1, \nu_2^\prime) +\rho_n^H}^{H_0}). \end{multline*}

\phantom{xxxxxx} $Spec_H( \pi_\lambda)  =  \cup_{\nu_2^\prime \in Spec_{L\cap K_2}(\pi_{\lambda_2}^{K_2})} Spec_L(V_{(\lambda_1, \nu_2^\prime) }^{H_0}).$
\end{cor}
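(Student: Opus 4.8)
The plan is to read both equalities off the closed multiplicity formula of Theorem~\ref{prop:equaldim}, by locating the supports of the nonnegative integers that occur in it. The elementary facts I would use throughout are: $\mu\in Spec_H(\pi_\lambda)$ if and only if $m^{G,H}(\lambda,\mu)=\dim Hom_H(V_\mu^H,V_\lambda^G)>0$; for an $L$-module $M$, $\nu\in Spec_L(M)$ if and only if $\dim Hom_L(V_\nu^L,M)>0$; and $Spec_L$ carries a Hilbert direct sum to the union of the spectra of its summands.

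First I would invoke the last identity recorded in the Remark after Theorem~\ref{prop:equaldim}, namely
\[
m^{G,H}(\lambda,\mu)=\sum_{\nu_2^\prime\in Spec_{L\cap K_2}(\pi_{\lambda_2}^{K_2})} m^{K_2,L\cap K_2}(\lambda_2,\nu_2^\prime)\,\dim Hom_L\big(V_\mu^L,V_{(\lambda_1,\nu_2^\prime)}^{H_0}\big),
\]
which by Lemma~\ref{lem:multshift} is equivalent to Theorem~\ref{prop:equaldim} and is precisely the identity established in Lemma~\ref{lem:ddzhfinite}. By definition of $Spec_{L\cap K_2}(\pi_{\lambda_2}^{K_2})$, every coefficient $m^{K_2,L\cap K_2}(\lambda_2,\nu_2^\prime)$ appearing in this sum is a strictly positive integer, and every $\dim Hom_L$ term is a nonnegative integer; hence the sum is positive exactly when at least one summand is, that is, exactly when $\mu\in Spec_L\big(V_{(\lambda_1,\nu_2^\prime)}^{H_0}\big)$ for some $\nu_2^\prime\in Spec_{L\cap K_2}(\pi_{\lambda_2}^{K_2})$. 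This gives $Spec_H(\pi_\lambda)=\bigcup_{\nu_2^\prime\in Spec_{L\cap K_2}(\pi_{\lambda_2}^{K_2})} Spec_L\big(V_{(\lambda_1,\nu_2^\prime)}^{H_0}\big)$, the last displayed equality of the Corollary.

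For the first displayed equality I would translate the previous step by $\rho_n^H$ by means of Lemma~\ref{lem:multshift}, which identifies $\dim Hom_L(V_\mu^L,V_{(\lambda_1,\nu_2^\prime)}^{H_0})$ with $\dim Hom_L(V_{\mu+\rho_n^H}^L,V_{(\lambda_1,\nu_2^\prime)+\rho_n^H}^{H_0})$; the same vanishing argument then yields $Spec_H(\pi_\lambda)+\rho_n^H=\bigcup_{\nu_2^\prime} Spec_L\big(V_{(\lambda_1,\nu_2^\prime)+\rho_n^H}^{H_0}\big)$. It then remains to recognise this union as $Spec_L(\mathbf H^2(H_0,\tau))$: by the definition $\mathbf H^2(H_0,\tau)=\sum_j q_j\,H^2(H_0,\sigma_j)$ with $\sigma_j=\pi_{\Lambda_1}^{K_1}\boxtimes\pi_{\nu_2}^{L\cap K_2}$ and each $q_j=m^{K_2,L\cap K_2}(\Lambda_2,\nu_2)>0$, its $L$-spectrum is $\bigcup_{\nu_2\in Spec_{L\cap K_2}(\pi_{\Lambda_2}^{K_2})} Spec_L\big(H^2(H_0,\sigma_j)\big)$, and the Harish-Chandra parameter computation preceding Theorem~\ref{prop:equaldim} identifies $H^2(H_0,\sigma_j)$ with $V_{(\lambda_1,\nu_2-q_\mathfrak u(\rho_n^\lambda)_2)+\rho_n^H}^{H_0}$. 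The substitution $\nu_2^\prime=\nu_2-q_\mathfrak u(\rho_n^\lambda)_2$, which by the Remark after Theorem~\ref{prop:equaldim} is a bijection from $Spec_{L\cap K_2}(\pi_{\Lambda_2}^{K_2})$ onto $Spec_{L\cap K_2}(\pi_{\lambda_2}^{K_2})$, then makes the two unions coincide.

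The argument is essentially bookkeeping and I do not anticipate a genuine obstacle. The one point that needs care is keeping the $\lambda_2$-version and the $\Lambda_2$-version of the $L\cap K_2$-spectrum of $\pi_{\Lambda_2}^{K_2}$ aligned, together with the accompanying $q_\mathfrak u(\rho_n^\lambda)_2$-shift of the $H_0$-parameters --- a shift which is nontrivial only for $\mathfrak g=\mathfrak{su}(m,n)$; this is exactly what the Remark after Theorem~\ref{prop:equaldim} and Lemma~\ref{lem:multshift} are designed to supply, so the deduction reduces to assembling those ingredients.
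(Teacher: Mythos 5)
Your proposal is correct and follows exactly the route the paper intends: the paper gives no separate proof, stating only that the corollary "is a consequence of Theorem~\ref{prop:equaldim}, Lemma~\ref{lem:multshift} and Lemma~\ref{lem:ddzhfinite}", and your argument is precisely the support-reading of those multiplicity identities (positivity of the sum iff positivity of some summand, since the coefficients $m^{K_2,L\cap K_2}$ are strictly positive on the spectrum), together with the $\rho_n^H$- and $q_{\mathfrak u}(\rho_n^\lambda)_2$-shifts supplied by Lemma~\ref{lem:multshift} and Lemma~\ref{lem:equalm}. No gaps; you have simply written out the bookkeeping the paper leaves implicit.
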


  As we pointed out, Theorem~\ref{prop:equaldim}     follows after we verify the next three Lemmas.
\begin{lem}\label{lem:ddzhfinite}   The hypothesis is $(G,H)$ is a symmetric space,   $\pi_\lambda$ is $H$-admissible.  Then
\begin{eqnarray*} \lefteqn{  \dim  Hom_H(V_\mu^H, V_\lambda^G) }\hspace{1.0cm} \\ & & = \sum_{\nu_2^\prime \in Spec_{L\cap K_2}(\pi_{\lambda_2}^{K_2})} m^{K_2 , L\cap K_2}(\lambda_2, \nu_2^\prime) \dim  Hom_L (V_\mu^L, V_{(\lambda_1, \nu_2^\prime)}^{H_0}). \end{eqnarray*}
\end{lem}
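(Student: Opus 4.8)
The plan is to reduce the branching multiplicity $\dim\operatorname{Hom}_H(V_\mu^H,V_\lambda^G)$ to a $K$-branching computation via the duality philosophy underlying Theorem~\ref{prop:GWKO} and the duality formula of Duflo--Vargas, and then to reorganize the resulting partition-function sum so that the $K_2$-multiplicities $m^{K_2,L\cap K_2}(\lambda_2,\nu_2')$ get peeled off. First I would invoke the Duflo--Vargas multiplicity formula recalled in the excerpt,
\begin{equation*}
m^{G,H}(\lambda,\mu)=\pm\sum_{w\in W_K}\epsilon(w)\,p_{S_w^H}(\mu-q_{\mathfrak u}(w\lambda)),
\end{equation*}
with $S_w^H=S_w^L\backslash\Phi(\mathfrak h/\mathfrak l,\mathfrak u)$ and $S_w^L=q_{\mathfrak u}(w(\Psi_\lambda)_n)\cup\Delta(\mathfrak k/\mathfrak l,\mathfrak u)$. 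The key structural input is the decomposition $\mathfrak k=\mathfrak k_1\oplus\mathfrak k_2$ with $K_1\subset H$ (hence $K_1\subset H_0$), so that $W_K=W_{K_1}\times W_{K_2}$, $\mathfrak u=\mathfrak u\cap\mathfrak k_1\oplus\mathfrak u\cap\mathfrak k_2$, and $\mathfrak l=\mathfrak k_1\oplus(\mathfrak l\cap\mathfrak k_2)$. Because $K_1\subset L$, the contribution of $W_{K_1}$ and of the $\mathfrak k_1$-roots is ``the same'' on the $G/H$ side and on a corresponding $H_0$-side; the alternating sum over $W_{K_1}$ on the $\mathfrak k_1$-part factors out and collapses to the effect of restricting the $K_1$-type $\pi_{\Lambda_1}^{K_1}$ to itself, i.e.\ it is invisible. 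This isolates the $\mathfrak k_2$-part of the Weyl-group sum and of the root multisets.

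Next I would identify the multiset $S_w^{H_0}$ appearing in the Duflo--Vargas formula applied to the symmetric pair $(G,H_0)$, and compare $S_w^H$ with $S_w^{H_0}$. Since $\Psi_{H,\lambda}\cap\Phi_c(\mathfrak l,\mathfrak u)=\Psi_{H_0,\lambda}\cap\Phi_c(\mathfrak l,\mathfrak u)$ and $\mathfrak g=\mathfrak h+\mathfrak q=\mathfrak h_0+(\mathfrak p\cap\mathfrak h)+(\mathfrak q\cap\mathfrak k)$, one checks that the partition functions $p_{S_w^H}$ and $p_{S_w^{H_0}}$ differ precisely by the datum $\Phi(\mathfrak k_2/\mathfrak l\cap\mathfrak k_2,\mathfrak u\cap\mathfrak k_2)$, which is exactly the root datum governing the $K_2$-branching $res_{L\cap K_2}(\pi_{\Lambda_2}^{K_2})$. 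Concretely, the plan is to use the elementary identity for partition functions (convolution of counting functions) to write
\begin{equation*}
p_{S_w^H}(x)=\sum_{\nu_2'} \Big(\text{number of ways to write } (\rho_n^\lambda)_2\text{-shifted }\nu_2'\Big)\;p_{S_w^{H_0}}(x-\text{(that }\nu_2'\text{)}),
\end{equation*}
and then recognize the inner count, after summing over $W_{K_2}$ with signs, as $m^{K_2,L\cap K_2}(\lambda_2,\nu_2')$ by Blattner's formula / the Duflo--Vargas formula applied to the compact pair $(K_2,L\cap K_2)$ (this is where the diagram in the excerpt with $Spec_{L\cap K_2}(\pi_{\lambda_2}^{K_2})$ enters). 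After interchanging the finite sums, the remaining factor $\sum_{w\in W_K}\epsilon(w)p_{S_w^{H_0}}(\mu+\rho_n^H-\text{shift})$ is by the same Duflo--Vargas formula (now for $(G,H_0)$ restricted further to $L$, equivalently for $H_0\supset L$) exactly $\dim\operatorname{Hom}_L(V_\mu^L,V_{(\lambda_1,\nu_2')}^{H_0})$, after absorbing the $\rho_n$-shift bookkeeping via Lemma~\ref{lem:multshift} as indicated in the Remark following Theorem~\ref{prop:equaldim}.

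I expect the main obstacle to be the careful bookkeeping of the various $\rho_n$-shifts and of the anomalous term $(\rho_n^\lambda)_2$, which as noted is nonzero only for $\mathfrak{su}(m,n)$ and then lies in $\mathfrak z_{\mathfrak k}^\star$; one must verify that $Spec_{L\cap K_2}(\pi_{\Lambda_2}^{K_2})=q_{\mathfrak u}(\rho_n^\lambda)_2+Spec_{L\cap K_2}(\pi_{\lambda_2}^{K_2})$ with matching multiplicities, and that the shift is a pure central-character translation so that it commutes with all the partition-function manipulations. The second delicate point is justifying that the signed $W_{K_1}$-sum and the $\mathfrak k_1$-part of the root data genuinely cancel against the $\mathfrak k_1$-contribution on the $H_0$-side; this uses that $K_1$ is a common normal subgroup of $K$, $L$, $H$, $H_0$ and that $\pi_{\Lambda_1}^{K_1}$ appears as the $K_1$-factor on both sides, so the relevant $K_1$-branching $res_{K_1}(\pi_{\Lambda_1}^{K_1})$ is trivial. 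Everything else — the reduction to Duflo--Vargas, the convolution identity for partition functions, the finite interchange of sums, and the final re-identification as $\dim\operatorname{Hom}_L$ — I regard as routine once the root-datum comparison $S_w^H$ versus $S_w^{H_0}\sqcup\Phi(\mathfrak k_2/\mathfrak l\cap\mathfrak k_2,\mathfrak u\cap\mathfrak k_2)$ is nailed down. I would then remark that a conceptual alternative proof, avoiding partition functions, runs through Theorem~\ref{prop:Rwelldefgc}(i): $r_0$ identifies $\mathrm{Cl}(\mathcal U(\mathfrak h_0)W)$ with $\mathbf H^2(H_0,\tau)$, and taking $L$-isotypic components of both sides directly yields the displayed equality; but since Theorem~\ref{prop:equaldim} is used to prove the surjectivity in Theorem~\ref{prop:Rwelldefgc}, the logically self-contained route is the partition-function computation sketched above, following \cite{Vaint}.
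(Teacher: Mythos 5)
Your proposal is correct and follows essentially the same route as the paper: the paper's proof is exactly the Duflo--Vargas partition-function identity, written in the equivalent language of Dirac-delta/Heaviside distributions, factored through $W_K=W_{K_1}\times W_{K_2}$ using $K_1\subset L$ to split $S_w^H$ into $s(\Psi_n^{H_0})\cup\Delta(\mathfrak k/\mathfrak l,\mathfrak u)$, with the $\Delta(\mathfrak k/\mathfrak l,\mathfrak u)$-factor recognized (via Heckman) as $res_{L\cap K_2}(\pi_{\lambda_2}^{K_2})$ and the remainder as the Blattner expansion of $res_L(\pi_{(\lambda_1,\nu_2')}^{H_0})$, followed by comparison of coefficients of the linearly independent $\delta_\nu$. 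The only inessential difference is that the $\rho_n$-shift and $(\rho_n^\lambda)_2$ bookkeeping you worry about is not needed for this unshifted statement (it is deferred to Lemma~\ref{lem:multshift} and Lemma~\ref{lem:equalm}), and the signed $W_{K_1}$-sum is not cancelled but absorbed into the $H_0\downarrow L$ formula via $W_L=W_{K_1}\times W_{K_2\cap L}$.
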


\begin{proof}[Proof of Lemma \ref{lem:ddzhfinite}]
The hypothesis $(G,H)$ is a symmetric pair and $\pi_\lambda$ is $H$-admissible, let us to apply    notation and facts in \cite{DV}, \cite{Vaint} as well as in \cite{H} \cite{DHV} \cite{GW} \cite{KO}. The proof is based on an idea in \cite{DHV} of piling up multiplicities by means of Dirac delta distributions. That is, let $\delta_\nu$ denote the Dirac delta distribution  at $\nu \in i\mathfrak u^\star$. Under our hypothesis, the function   $m^{G,H}(\lambda,\mu)$ has polynomial growth in $\mu$, whence,  the series $\sum_\mu m^{G,H}(\lambda, \mu) \,\,\delta_\mu $ converges in the space of distributions in $i\mathfrak u^\star$. Since Harish-Chandra parameter is regular, we may and will extend the function $m^{G,H}(\lambda, \cdot)$ to a $W_L$-skew symmetric function  by the rule $m^{G,H}(\lambda, w\mu) =\epsilon (w) m^{G,H}(\lambda, \mu), w \in W_L$. Thus, the series $\sum_{\mu \in HC-param(H)} m^{G,H}(\lambda, \mu)\delta_\mu$ converges in the space of distributions in $i\mathfrak u^\star$.  Next, for $0\not= \gamma \in i\mathfrak u^\star$ we consider the discrete Heaviside distribution $y_\gamma :=\sum_{n\geq 0}\delta_{\frac{\gamma}{2} +n\gamma}$,  and for a strict, finite, multiset $S=\{\gamma_1, \dots, \gamma_r\}$ of elements in $i\mathfrak u^\star$, we set \begin{equation*}y_S:= y_{\gamma_1}\star \cdots \star y_{\gamma_r}=\sum_{\mu \in i\mathfrak u^\star} p_S(\mu) \delta_\mu .\end{equation*}  Here, $\star$ is  the convolution product in the space of distributions on $i\mathfrak u^\star$. $p_S$ is called the {\it partition} function attached to the set $S$. Then, in \cite{DV} there is presented the equality
$$\sum_{\mu   \in HC-param(H)}  m^{G,H}(\lambda, \mu) \,\,\delta_\mu =\sum_{w \in W_K} \epsilon(w)\,\,\delta_{q_\mathfrak u(w\lambda)} \star y_{S_w^H}.$$
Here,  $W_S$ is the Weyl group of the compact connected Lie group $S$; for a $ad(\mathfrak u)$-invariant linear subspace $R$ of $\mathfrak g_\mathbb C$, $\Phi(R,\mathfrak u)$ denotes the multiset of elements in $\Phi(\mathfrak g,\mathfrak u)$ such that its root space is contained in $R$, and    $S_w^H=[q_\mathfrak u(w(\Psi_\lambda)_n)\cup \Delta(\mathfrak k/\mathfrak l,\mathfrak u)]\backslash \Phi(\mathfrak h/\mathfrak l,\mathfrak u)$.

Since, $K=K_1K_2$,  $W_K =W_{K_1}\times W_{K_2}$, we write $W_K \ni  w=st, s\in W_{K_1}, t\in W_{K_2}$. We recall the hypothesis yields $K_1\subset L$. It readily follows:  $s \Phi(\mathfrak h/\mathfrak l,\mathfrak u)=\Phi(\mathfrak h/\mathfrak l,\mathfrak u)$,  $ s\Delta(\mathfrak k/\mathfrak l,\mathfrak u)=\Delta(\mathfrak k/\mathfrak l,\mathfrak u),   t(\Psi_\lambda)_n=(\Psi_\lambda)_n,  t\eta_1=\eta_1, s\eta_2=\eta_2 \, \text{for}\, \eta_j \in \mathfrak k_j \cap \mathfrak u  $, $sq_\mathfrak u (\cdot)=q_\mathfrak u(s\cdot)$.
Hence,
$$S_w^H= s([q_\mathfrak u ( (\Psi_\lambda)_n) \cup \Delta(\mathfrak k/\mathfrak l,\mathfrak u)]\backslash \Phi(\mathfrak h/\mathfrak l,\mathfrak u))=s(\Psi_n^{H_0}) \cup \Delta(\mathfrak k/\mathfrak l,\mathfrak u).$$
Thus,
\begin{eqnarray*}\lefteqn{  \sum_{w \in W_K} \epsilon(w)\,\,\delta_{q_\mathfrak u(w\lambda)} \star y_{S_w^H}  =\sum_{s,t} \epsilon(st)\delta_{q_\mathfrak u(st\lambda)}\star y_{s(\Psi_n^{H_0}) \cup \Delta(\mathfrak k/\mathfrak l,\mathfrak u)}}  \\ & & \mbox{\phantom{xxxxx}} = \sum_{s,t} \epsilon(st)\delta_{q_\mathfrak u(s\bcancel{t}\lambda_1+\bcancel{s}t\lambda_2) }\star y_{s(\Psi_n^{H_0})}\star y_{ \Delta(\mathfrak k/\mathfrak l,\mathfrak u)}  \\ & & \mbox{\phantom{xxxxxxxxxx}} = \sum_{s} \epsilon(s)\delta_{(s\lambda_1,0)} \star y_{s(\Psi_n^{H_0})}\star \sum_{t} \epsilon(t) \delta_{q_\mathfrak u(t\lambda_2) }  \star y_{ \Delta(\mathfrak k/\mathfrak l,\mathfrak u)}. \end{eqnarray*}

Following \cite{H}, we write   the restriction of $\pi_{\lambda_2}^{K_2}$  to $L\cap K_2$ in the language of Dirac, Heaviside distributions in $i\mathfrak u^\star$, whence     \begin{eqnarray*} \lefteqn{\sum_{t \in W_{K_2}} \epsilon(t) \delta_{ q_\mathfrak u (t \lambda_{ 2 }) }  \star y_{ \Delta(\mathfrak k_2 /(\mathfrak k_2 \cap \mathfrak l),\mathfrak u)}} \\ & & \mbox{\phantom{xxxxxx}} =\sum_{\nu_2^\prime \in Spec_{L\cap K_2}(\pi_{\lambda_2}^{K_2})} m^{K_2 , L\cap K_2}(\lambda_2, \nu_2^\prime)\sum_{w_2 \in W_{K_2\cap L}} \epsilon(w_2) \delta_{(0,w_2 \nu_2^\prime)}. \end{eqnarray*} In the previous formula, we will apply   $ \Delta(\mathfrak k_2 /(\mathfrak k_2 \cap \mathfrak l),\mathfrak u)= \Delta(\mathfrak k /\mathfrak l,\mathfrak u)$.\\
We also write in the same language the restriction to $L$ of a Discrete Series $\pi_{(\lambda_1, \nu_2^\prime)}^{H_0}$ for $H_0$. This is.
$$\sum_{\nu \in i\mathfrak u^\star} m^{H_0,L}((\lambda_1,\nu_2^\prime), \nu) \, \delta_\nu =                                                        \sum_{s\in W_{K_1},t\in W_{K_2\cap L}} \epsilon(st)\delta_{ st(\lambda_1, \nu_2^\prime) }\star y_{st(\Psi_n^{H_0})}.$$
 Putting together the previous equalities, we obtain
 \begin{eqnarray*}\lefteqn{\sum_\mu  m^{G,H}( \lambda, \mu ) \,\,\delta_\mu }  \\ &   =\sum_{\nu_2^\prime \in Spec_{L\cap K_2}(\pi_{\lambda_2}^{K_2})} m^{K_2 , L\cap K_2}(\lambda_2, \nu_2^\prime) & \\ &   \mbox{\phantom{xxxxxxxxxxccccccccccc}}    \times \sum_{s\in W_{K_1},t\in W_{K_2\cap L}} \epsilon(st)\delta_{(st\lambda_1,st\nu_2^\prime) }\star y_{st(\Psi_n^{H_0})} &  \\ & \mbox{\phantom{xxxx}}
 =\sum_\nu (\sum_{\nu_2^\prime \in Spec_{L\cap K_2}(\pi_{\lambda_2}^{K_2})} m^{K_2 , L\cap K_2}(\lambda_2, \nu_2^\prime)   m^{H_0,L}((\lambda_1,\nu_2^\prime),\nu)) \,\,\delta_\nu . & \end{eqnarray*}

  Since,    the family $\{ \delta_\nu \}_{\nu \in i\mathfrak u^\star}$   is linearly independent,
 we have shown Lemma~\ref{lem:ddzhfinite}.
\end{proof}

 In order to conclude the proof of  the dimension equality  we state and prove a translation invariant property of multiplicity.
\begin{lem}  \label{lem:multshift} For a dominant integral $\mu \in i\mathfrak u^\star$    it holds: \\
\phantom{xxxxxxxxxxxxxx} $ m^{H_0 ,L} ((\lambda_1  ,  \nu_2^\prime )+\rho_n^H,  \mu +\rho_n^H )= m^{H_0 ,L} ((\lambda_1, \nu_2^\prime) ,\mu)  $.

\end{lem}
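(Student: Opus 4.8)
The plan is to prove Lemma~\ref{lem:multshift} by reducing the multiplicity $m^{H_0,L}(\cdot,\cdot)$ to a combinatorial object via the same Dirac/Heaviside distribution calculus used in the proof of Lemma~\ref{lem:ddzhfinite}, and then observe that both sides of the claimed identity are obtained from exactly the same partition function, merely evaluated after a common translation. Concretely, recall that branching a Discrete Series $\pi_{(\lambda_1,\nu_2')}^{H_0}$ of $H_0$ to $L$ is governed by Blattner's formula (via \cite{HS}), so that
\begin{equation*}
\sum_{\nu \in i\mathfrak u^\star} m^{H_0,L}\big((\lambda_1,\nu_2'),\nu\big)\,\delta_\nu = \sum_{u \in W_L} \epsilon(u)\,\delta_{u(\lambda_1,\nu_2')}\star y_{u(\Psi_n^{H_0})},
\end{equation*}
where $y_{u(\Psi_n^{H_0})}$ is the Heaviside convolution product attached to the noncompact positive roots of $H_0$. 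The key structural input, already available in the excerpt, is that $K_1 \subset L$ and $\mathfrak l = \mathfrak k_1 \oplus (\mathfrak l \cap \mathfrak k_2)$ as ideals, together with the fact that $\rho_n^H \in i\mathfrak u^\star$ is fixed by $W_L$ (being half-sum over $W_L$-stable sets of noncompact $H$-roots, or at least $W_L$-invariant as an element of the relevant chamber data).

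First I would record that $\rho_n^H$ is $W_L$-invariant: since $\Psi_{H,\lambda}\cap\Phi_c(\mathfrak l,\mathfrak u)$ and $\Psi_{H_0,\lambda}\cap\Phi_c(\mathfrak l,\mathfrak u)$ coincide and $W_L$ permutes the noncompact roots of $\mathfrak h$ that restrict nontrivially, the sum defining $2\rho_n^H$ is $W_L$-stable, hence $u\rho_n^H = \rho_n^H$ for all $u\in W_L$. Next I would write out the right-hand side of the desired identity using the displayed Blattner expansion with parameter $(\lambda_1,\nu_2')$, and the left-hand side using the same expansion with parameter $(\lambda_1,\nu_2')+\rho_n^H$. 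Because convolution with $\delta_{\rho_n^H}$ commutes with everything and $u(\lambda_1,\nu_2' + \rho_n^H) = u(\lambda_1,\nu_2') + u\rho_n^H = u(\lambda_1,\nu_2') + \rho_n^H$, we get
\begin{equation*}
\sum_{\nu} m^{H_0,L}\big((\lambda_1,\nu_2')+\rho_n^H,\nu\big)\,\delta_\nu = \delta_{\rho_n^H}\star\Big(\sum_{u\in W_L}\epsilon(u)\,\delta_{u(\lambda_1,\nu_2')}\star y_{u(\Psi_n^{H_0})}\Big) = \sum_{\nu} m^{H_0,L}\big((\lambda_1,\nu_2'),\nu\big)\,\delta_{\nu+\rho_n^H}.
\end{equation*}
Comparing coefficients of $\delta_{\mu+\rho_n^H}$ on both sides — legitimate since $\{\delta_\nu\}$ is linearly independent — yields $m^{H_0,L}((\lambda_1,\nu_2')+\rho_n^H,\mu+\rho_n^H) = m^{H_0,L}((\lambda_1,\nu_2'),\mu)$, which is the claim.

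The only genuine obstacle is the $W_L$-invariance of $\rho_n^H$, i.e. making sure the translation vector is not disturbed by the Weyl-group averaging — if $\rho_n^H$ failed to be $W_L$-fixed the shift would not pass through the sum cleanly. I would handle this by appealing to the explicit description of $\Psi_{H,\lambda}$ from \cite{GW} and Tables 1--3 of \cite{Vaint}: the noncompact simple roots there have multiplicity data making $\Psi_{H,\lambda}\cap\Phi_n$ a $W_L$-stable set (equivalently, $\rho_n^H$ restricted to $\mathfrak u$ lies in the $W_L$-fixed subspace, being essentially a central character contribution plus a $W_L$-invariant piece, exactly as in the parallel remark about $(\rho_n^\lambda)_2$). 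Once that invariance is in hand, the rest is the formal bookkeeping sketched above, identical in spirit to the end of the proof of Lemma~\ref{lem:ddzhfinite}.
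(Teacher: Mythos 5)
Your argument hinges on the claim that $\rho_n^H$ is fixed by all of $W_L$, and that claim is false in general; it is precisely the difficulty this lemma exists to overcome. The set $\Psi_{H,\lambda}\cap\Phi_n(\mathfrak h,\mathfrak u)$ is stable under the subgroup $W_U(\Psi_{H,\lambda})$ generated by reflections in the \emph{compact simple} roots of $\Psi_{H,\lambda}$ (that is \cite[Lemma 3.3]{HS}), but not under the full Weyl group $W_L$. A concrete counterexample from the paper's own Explicit Example I: for the quaternionic pair $(\mathfrak{sp}(1,d),\,\mathfrak{sp}(1,k)+\mathfrak{sp}(d-k))$ one has $\rho_n^H=k\delta$, while $L$ contains the factor $K_1\equiv SU_2(2\delta)$, so $W_L$ contains the reflection $s_{2\delta}$, and $s_{2\delta}(k\delta)=-k\delta\neq k\delta$. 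Consequently $\delta_{u((\lambda_1,\nu_2')+\rho_n^H)}\neq\delta_{\rho_n^H}\star\delta_{u(\lambda_1,\nu_2')}$ for such $u$, the translation does not pass through the alternating sum over $W_L$, and your displayed identity of distributions breaks down at its central step. The lemma is therefore not a formal consequence of the Dirac--Heaviside calculus.

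The paper's proof confronts exactly this point. It writes the multiplicity via Blattner's formula as an alternating sum over $s\in W_L$ of values of the partition function $Q_0$ attached to $\Psi_{H_0,\lambda}\cap\Phi_n$, and then invokes \cite[Statement 4.31]{HS}: when $\Psi_{H_0,\lambda}$ has the Borel--de Siebenthal property, only those $s$ lying in $W_U(\Psi_{H_0,\lambda})$ contribute nonzero summands. Those $s$ do fix $\rho_n^H$ (using OBS1, that $\Psi_{H,\lambda}$ and $\Psi_{H_0,\lambda}$ share the same compact simple roots, together with \cite[Lemma 3.3]{HS}), so the shift cancels summand by summand. The rest of the paper's proof --- the observations OBS0--OBS5 and Remark~\ref{obs:wuforsum} --- is a case-by-case extension of this Hecht--Schmid vanishing statement to the systems where Borel--de Siebenthal fails ($\mathfrak{su}(m,n)$ with $\Psi_a,\tilde\Psi_b$, and certain $\mathfrak{so}(2m,2n)$ configurations). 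None of that is optional bookkeeping: without first cutting the Weyl sum down to the stabilizer of $\rho_n^H$, the translation invariance you want simply does not follow. To repair your proposal you would need to supply exactly this vanishing input, at which point you would be reproducing the paper's argument.
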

\begin{proof}
We recall that the hypothesis of the Lemma~\ref{lem:multshift} is: $(G,H)$ is a symmetric pair and $\pi_\lambda$ is $H$-admissible. The proof of Lemma~\ref{lem:multshift} is an application of Blattner's multiplicity formula, facts from \cite{HS} and     observations  from   \cite[Table 1,2,3]{Vaint}. In the next paragraphs we only consider systems $\Psi_\lambda$ so that $res_H(\pi_\lambda)$ is admissible. We   check the following statements by means of case by case analysis and  the tables in \cite{GW} and \cite{Vaint}.

OBS0.    Every   quaternionic system of positive roots that we are dealing with, satisfies the Borel de Siebenthal property, except for the algebra $\mathfrak{su}(2,2n)$ and the systems $\Psi_1 $ (see \ref{obs:wuforsum}).   Its Dynkin diagram is \xymatrix{
{\bullet}   \ar@{-}[r] & {\circ} \ar@{-}[r]& \cdots   \ar@{-}[r]& {\circ}   \ar@{-}[r]
& {\bullet}    }  .
 Bullet represents non compact roots, circle compact.

OBS1.  Always the systems $\Psi_{H,\lambda},  \Psi_{H_0,\lambda}$ have the same compact simple roots.

OBS2.  When $ \Psi_\lambda$ satisfies the Borel de Siebenthal property, it follows that both systems $\Psi_{H,\lambda},  \Psi_{H_0,\lambda}$ satisfy the Borel de Siebenthal property. Except for\footnote{We are indebted to the referee for this observation}: i) $\mathfrak g=\mathfrak{so}(2m,2n), \mathfrak h=\mathfrak{so}(2m,2)+\mathfrak{so}(2n-2)  $ and $\Psi_{H,\lambda} $;  ii) $\mathfrak g=\mathfrak{so}(2m,2n), \mathfrak h=\mathfrak{so}(2m,2n-2)+\mathfrak{so}( 2)  $ and $\Psi_{H_0,\lambda} $, for details see the proof of Remark~\ref{obs:wuforsum}.

OBS3.   $\Psi_\lambda$ satisfies the Borel de Siebenthal property except for two families of algebras: a) the algebra $ \mathfrak{su}(m,n) $ and the systems $\Psi_a,  a=1,\cdots, m-1$, $\tilde \Psi_b,  b=1, \cdots, n-1$,     the corresponding systems $\Psi_{H_0,\lambda}, \Psi_{H,\lambda}$ do not satisfy the Borel de Siebenthal property. They   have two noncompact simple roots; b) For the algebra $\mathfrak{so}(2m,2)$   each system   $\Psi_{\pm }$ does  not satisfy the Borel de Siebenthal property, however,  each associated  system $ \Psi_{SO(2m,1),\lambda},\Psi_{H_0,\lambda} $ satisfies the Borel de Siebenthal property.

OBS4. For   the pair $(\mathfrak{su}(2,2n),
\mathfrak{sp}(1,n))$.  $\Psi_1$ does not satisfy the Borel de Siebenthal property. Here,  $\Psi_{H,\lambda} =\Psi_{H_0,\lambda}$ and they   have Borel de Siebenthal property.

 OBS5. Summing up. Both systems  $\Psi_{H,\lambda},  \Psi_{H_0,\lambda}$ satisfy the Borel de Siebenthal property   except for: i) $(\mathfrak{su}(m,n), \mathfrak{su}(m,k)+ \mathfrak{su}(n-k)+\mathfrak  u(1))$, $(\mathfrak{su}(m,n), \mathfrak{su}(k,n)+ \mathfrak{su}( m-k)+\mathfrak  u(1))$ and the systems $\Psi_a, a=1,\cdots, m-1$, $\tilde \Psi_b, b=1, \cdots, n-1$;  ii) $\mathfrak g=\mathfrak{so}(2m,2n), \mathfrak h=\mathfrak{so}(2m,2)+\mathfrak{so}(2n-2)  $ and $\Psi_{H,\lambda} $;  iii) $\mathfrak g=\mathfrak{so}(2m,2n), \mathfrak h=\mathfrak{so}(2m,2n-2)+\mathfrak{so}( 2)  $ and $\Psi_{H_0,\lambda} $, for details see the proof of Remark~\ref{obs:wuforsum}.

\smallskip

To continue, we explicit   Blattner's formula according to our setting, we recall   fact's from   \cite{HS} and finish the proof of Lemma~\ref{lem:multshift} under the assumption $\Psi_{H_0,\lambda}$ satisfies the property of Borel de Siebenthal. Later on, we consider  other systems.

\medskip

Blattner's multiplicity formula applied to the $L$-type $V_{\mu +\rho_n^H}^L$ of $V_{(\lambda_1, \nu_2^\prime)+\rho_n^H }^{H_0}$ yields
\begin{eqnarray} \label{eqn:(A)}\lefteqn{   \dim  Hom_L(V_{\mu +\rho_n^H}^L, V_{(\lambda_1,\nu_2^\prime) +\rho_n^H}^{H_0}) } & & \\   & & =\sum_{s\in W_L} \epsilon (s) Q_0(s(\mu +\rho_n^H)-((\lambda_1,\nu_2^\prime) +\rho_n^H+\rho_n^{H_0})).\nonumber
\end{eqnarray}
   Here, $Q_0$ is the partition function associated to the set $\Phi_n (\mathfrak h_0) \cap \Psi_{H_0,\lambda}$.

 We recall a fact that allows to simplify the formula of above under our setting.

Fact 1: \cite[Statement 4.31]{HS}. For a system  $\Psi_{H_0,\lambda}$ having  the Borel de Siebenthal property,   it is shown that in the above sum, if the summand attached to $s \in W_L$ contributes nontrivially, then $ s$ belongs to the subgroup $W_U(\Psi_{H_0,\lambda})$   spanned by the reflections about the   compact simple roots in $\Psi_{H_0,\lambda}$.

 From  OBS1 we have
$W_U(\Psi_{H_0,\lambda})= W_U(\Psi_{H,\lambda})$. Owing that either $\Psi_{H_0,\lambda}$ or $\Psi_{H,\lambda}$ has the Borel de Siebenthal property we apply  \cite[Lemma 3.3]{HS}, whence $W_U(\Psi_{H,\lambda})=\{ s \in W_L : s(\Psi_{H,\lambda}\cap \Phi_n(\mathfrak h,\mathfrak u))   =\Psi_{H,\lambda}\cap \Phi_n(\mathfrak h,\mathfrak u)\}$. Thus, for $s \in W_U(\Psi_{H_0,\lambda})$ we have $s\rho_n^H= \rho_n^H$. We apply  the equality $s\rho_n^H= \rho_n^H$ in \ref{eqn:(A)} and we obtain $$\dim  Hom_L (V_{\mu+\rho_n^H}^L, V_{(\lambda_1, \nu_2^\prime)+\rho_n^H }^{H_0})= \sum_{s \in W_U(\Psi_{H_0,\lambda})} \epsilon(s) Q_0( s\mu -((\lambda_1, \nu_2^\prime )+\rho^{H_0}_n)).$$  Blattner's formula and the previous observations gives us that the right hand side of the above equality is

\phantom{cccccccccccccccc}$\dim  Hom_L(V_{\mu}^L, V_{(\lambda_1, \nu_2^\prime)}^{H_0})= m^{H_0,L}((\lambda_1, \nu_2^\prime), \mu)$,\\ whence, we have shown Lemma~\ref{lem:multshift} when  $\Psi_{H_0,\lambda}$ has the Borel de Siebenthal property.
\medskip

  In order to complete the proof of Lemma~\ref{lem:multshift}, owing to OBS5,  we are left to consider the pairs
$ (\mathfrak{su}(m,n), \mathfrak{su}(m,k)+ \mathfrak{su}(n-k)+\mathfrak  u(1)) $ as well as $ (\mathfrak{su}(m,n), \mathfrak{su}(k,n)+ \mathfrak{su}(m-k)+\mathfrak  u(1)) $ and the systems $\Psi_a, a=1,\cdots, m-1$, $\tilde \Psi_b, b=1, \cdots, n-1$; $\mathfrak g=\mathfrak{so}(2m,2n), \mathfrak h=\mathfrak{so}(2m,2)+\mathfrak{so}(2n-2)  $ and $\Psi_{H,\lambda} $;   $\mathfrak g=\mathfrak{so}(2m,2n), \mathfrak h=\mathfrak{so}(2m,2n-2)+\mathfrak{so}( 2)  $ and $\Psi_{H_0,\lambda} $, for details see the proof of Remark~\ref{obs:wuforsum}.

 The previous reasoning says  we are left     to extend Fact 1, \cite[Statement (4.31)]{HS}, for the pairs $ (\mathfrak{su}(m,n), \mathfrak{su}(m,k)+ \mathfrak{su}(n-k)+\mathfrak  u(1)) $ (resp.  $ (\mathfrak{su}(m,n), \mathfrak{su}(k,n)+ \mathfrak{su}(m-k)+\mathfrak  u(1)) $) and the systems $(\Psi_a)_{ a=1,\cdots, m-1}$  (resp. $(\tilde \Psi_b)_{ b=1, \cdots, n-1}$);        $\mathfrak g=\mathfrak{so}(2m,2n), \mathfrak h=\mathfrak{so}(2m,2n-2)+\mathfrak{so}( 2)  $ and $\Psi_{H_0,\lambda} $, for details see the proof of Remark~\ref{obs:wuforsum}. Under this setting   we first    verify:
   \begin{rmk}\label{obs:wuforsum} If $w \in W_L$ and $Q_0(w\mu -(\lambda + \rho_n^{H_0}))\not= 0$, then $ w \in W_U(\Psi_{H_0,\lambda})$.
   \end{rmk}

   To show {\it Remark}~\ref{obs:wuforsum} we follow \cite{HS}. To begin with we consider the pairs associated to $\mathfrak g= \mathfrak {su}(m,n)$.  We fix as Cartan subalgebra   $\mathfrak t$ of $\mathfrak {su}(m,n)$ the set of diagonal matrices in $\mathfrak {su}(m,n).$   For certain orthogonal basis $\epsilon_1, \dots ,\epsilon_m, \delta_1, \dots, \delta_n$ of the dual vector space to the subspace of diagonal matrices in $\mathfrak{gl}(m+n, \mathbb C),$ we may, and will choose  $\Delta =\{ \epsilon_r - \epsilon_s, \delta_p -\delta_q, 1 \leq  r < s \leq m, 1 \leq p < q \leq n \},$ the set of noncompact roots is $ \Phi_n= \{ \pm (\epsilon_r - \delta_q) \}.$ We recall the  positive roots systems for $\Phi(\mathfrak g, \mathfrak t)$  containing $\Delta$ are in a bijective  correspondence with the totality of lexicographic orders for the basis $\epsilon_1, \dots ,\epsilon_m, \delta_1, \dots, \delta_n$ which contains the "suborder" $\epsilon_1 > \dots > \epsilon_m, \,\, \delta_1 > \dots > \delta_n.$ The two holomorphic systems correspond to the orders $\epsilon_1 > \dots > \epsilon_m > \delta_1 > \dots > \delta_n ; \,\,  \delta_1 > \dots > \delta_n >\epsilon_1 > \dots > \epsilon_m.$ We fix $1 \leq a \leq m-1, $ and let $ \Psi_a$ denote the set of positive roots associated to the order $\epsilon_1 >     \dots > \epsilon_a >\delta_1 > \dots > \delta_n > \epsilon_{a+1} > \dots > \epsilon_m$.   We fix $1 \leq b \leq n-1 $ and let $\tilde{\Psi}_b$ denote the set of positive roots associated to the order $\delta_1 >  \dots > \delta_b >\epsilon_1 > \dots > \epsilon_m > \delta_{b+1} > \dots > \delta_n $. Since, $\mathfrak h=\mathfrak{su}(m,k)+\mathfrak u(n-k)$, $\mathfrak h_0=\mathfrak{su}(m,n-k)+\mathfrak u(k)$.  The root systems for  $(\mathfrak h, \mathfrak t)$ and  $(\mathfrak h_0, \mathfrak t)$ respectively are:
       \begin{multline*}
     \Phi(\mathfrak h, \mathfrak t)=\{ \pm (\epsilon_r -\epsilon_s), \pm(\delta_p -\delta_q), \pm (\epsilon_i -\delta_j),
    1 \leq r < s \leq m,  \\  1 \leq p <  q \leq k, \,\,   or, \,\, k+1 \leq p <  q \leq n, 1\leq i \leq m,  1 \leq j \leq k \}.
    \end{multline*}
   \begin{multline*} \Phi(\mathfrak h_0, \mathfrak t)=\{ \pm (\epsilon_r -\epsilon_s), \pm(\delta_p -\delta_q), \pm (\epsilon_i -\delta_j),  1 \leq r < s \leq m, \\  1 \leq p <  q \leq k\,\, or\,\, k+1 \leq p < q \leq n,  1 \leq i \leq m,  k+1 \leq j \leq n \}.
   \end{multline*}

       The system $\Psi_{H,\lambda}=\Psi_\lambda \cap \Phi(\mathfrak h,\mathfrak t),$  $\Psi_{H_0,\lambda}=\Psi_\lambda \cap \Phi(\mathfrak h_0,\mathfrak t)$ which correspond to  $\Psi_a$ are the system associated to the respective lexicographic orders  $$ \epsilon_1> \dots > \epsilon_a > \delta_1> \dots> \delta_k > \epsilon_{a+1}  > \dots > \epsilon_m,\, \delta_{k+1}> \dots> \delta_n.  $$ $$\epsilon_1> \dots > \epsilon_a > \delta_{k+1}> \dots> \delta_n > \epsilon_{a+1}  > \dots > \epsilon_m, \, \delta_1> \dots> \delta_k  .$$

Without loss of generality, and in order to simplify notation,   we may and will assume $\mathfrak h_0=\mathfrak{su}(m,n)$, $\Psi_{H_0,\lambda}=\Psi_a$ and we show {\it Remark}~\ref{obs:wuforsum} for $\mathfrak{su}(m,n)$ and $\Psi_a$.   $Q$   denotes the partition function for $\Psi_a \cap \Phi_n$.

  The subroot system spanned by the compact simple roots in $\Psi_a$ is

  $\Phi_U =\{ \epsilon_i -\epsilon_j, 1\leq i\leq a, 1\leq j\leq a \,\text{or}\,  a+1\leq i\leq m, a+1\leq j\leq m  \} \cup \{ \delta_i -\delta_j, 1\leq i\not= j\leq n  \}.$

   $\Psi_a \cap \Phi_c \backslash \Phi_U =\{ \epsilon_i -\epsilon_j, 1\leq i\leq a, a+1\leq j\leq m \} $.

   $\Psi_a \cap \Phi_n =\{\epsilon_i -\delta_j, \delta_j -\epsilon_r, 1\leq i \leq a, a+1\leq r \leq m, 1\leq j \leq n \}$.

   $2\rho_n^{H_0}= n(\epsilon_1 +\dots +\epsilon_a)- n(\epsilon_{a+1} +\dots +\epsilon_m)-(a-(m-a))(\delta_1 +\dots +\delta_n)$.

   A finite sum of non compact roots in $\Psi_a$ is equal a to

   $B= \sum_{1\leq j \leq a} A_j \epsilon_j -\sum_{a+1\leq i \leq m} B_i \epsilon_i +\sum_r C_r \delta_r $ with $A_j, B_i$ non negative numbers.

   Let $ w \in W_L $ so that $Q(w\mu -(\lambda + \rho_n^{H_0}))\not= 0$. Hence, $\mu =w^{-1}(\lambda +\rho_n^{H_0} +B)$, with $B$ a sum of roots in $\Psi_a\cap \Phi_n$.  Thus, $w^{-1}$ is the unique element in $W_L$ that takes $\lambda +\rho_n^{H_0} +B $  to the Weyl chamber determined by $\Psi_a \cap \Phi_c$.

   Let $w_1 \in W_U(\Psi_a)$ so that $ w_1(\lambda +\rho_n +B)$ is $  \Psi_a \cap \Phi_U  $-dominant. Next we verify $w_1(\lambda +\rho_n +B)$ is $\Psi_a \cap \Phi_c$-dominant. For this, we fix $\alpha \in  \Psi_a  \cap\Phi_c    \backslash \Phi_U$ and check $(w_1(\lambda +\rho_n +B),\alpha)>0 $. $\alpha =\epsilon_i -\epsilon_j , i\leq a < j$,  and $  w_1 \in W_U(\Psi_a)$,   hence, $w_1^{-1}(\alpha)=\epsilon_r -\epsilon_s, r\leq a < s$ belongs to $\Psi_a$. Thus,
$(w_1(\lambda +\rho_n +B),\alpha)=( \lambda +\rho_n +B, w_1^{-1}\alpha)= ( \lambda , w_1^{-1}\alpha) +(\rho_n, w_1^{-1}\alpha) +(B, w_1^{-1}\alpha)= ( \lambda , w_1^{-1}\alpha) +n-(-n) +A_r +B_s $, the first summand is positive because $\lambda$ is $\Psi_a$-dominant, the third and fourth are nonnegative. Therefore, $w^{-1}=w_1$ and  we have shown {\it Remark}~\ref{obs:wuforsum} for $\mathfrak g=\mathfrak {su}(m,n)$.

\smallskip

To follow we fix $\mathfrak g=\mathfrak{so}(2m,2n), n>1, \mathfrak h= \mathfrak{so}(2m,2n-2)+\mathfrak{so}(2), \mathfrak h_0=\mathfrak{so}(2m,2 )+ \mathfrak{so}(2n-2)$, then,    for certain orthogonal basis $\epsilon_1, \dots ,\epsilon_m, \delta_1, \dots, \delta_n$ of the dual vector space to $\mathfrak t$, the system of positive roots $\Psi_\lambda =\{ \epsilon_i \pm \epsilon_j, \delta_r \pm \delta_s, \epsilon_a \pm \delta_b, 1 \leq i<j\leq m, 1 \leq r<s\leq n, 1 \leq a\leq m, 1 \leq b \leq n\}$ is so that $K_1(\Psi_\lambda)=SO(2m)\subset H$.  Since $n>1$ this is a Borel de Siebenthal system.

$\Psi_{H,\lambda}=\{ \epsilon_i \pm \epsilon_j, \delta_r \pm \delta_s, \epsilon_a \pm \delta_b, 1 \leq i<j\leq m, 1 \leq r<s\leq n-1, 1 \leq a\leq m, 1 \leq b \leq n-1  \}$.

  For $n>2 $, $\Psi_{H,\lambda} $ is Borel de Siebenthal, for $n=2$, $\Psi_{H,\lambda}$ is not Borel de Siebenthal.

$\Psi_{H_0,\lambda}=\{ \epsilon_i \pm \epsilon_j, \delta_r \pm \delta_s, \epsilon_a \pm \delta_n, 1 \leq i<j\leq m, 1 \leq r<s\leq n-1, 1 \leq a\leq m      \}$. Since $n>1$, $\Psi_{H_0,\lambda}$ is not a Borel de Siebenthal system. The simple roots for $\Psi_{H_0,\lambda}$ are $\epsilon_1 -\epsilon_2, \cdots, \epsilon_{m-1} -\epsilon_m, \epsilon_m \pm \delta_n, \delta_1 - \delta_2, \cdots, \delta_{n-3}- \delta_{n-2}, \delta_{n-2} \pm \delta_{n-1}$.

$ \Phi_U =\{\epsilon_i - \epsilon_j , \delta_r \pm \delta_s ,  1 \leq i<j\leq m, 1 \leq r<s\leq n-1 \}$.

$W_L=span\{ S_{\epsilon_i \pm \epsilon_j}, S_{\delta_r \pm \delta_s},  1 \leq i<j\leq m, 1 \leq r<s\leq n-1 \}$.

$W_U(\Psi_{H_0,\lambda})=span\{S_{\epsilon_i - \epsilon_j}, S_{\delta_r \pm \delta_s},  1 \leq i<j\leq m, 1 \leq r<s\leq n-1 \}$.

$\Psi_\lambda \cap \Phi_c \backslash \Phi_U=\{\epsilon_i + \epsilon_j , 1\leq i<j\leq m  \}.$

$Q_0$ is the partition function associated to $\Psi_{H_0, \lambda}\cap \Phi_n
 =\{ \epsilon_i \pm \delta_n, 1\leq i \leq m \}$.

  $\rho_n^{H_0}=\epsilon_1 +\dots +\epsilon_m$.

A finite sum of noncompact roots $B$ in $\Psi_{H_0,\lambda}$ is equal to   \\
  $B=\sum_{j=1}^{j=m} (a_j +b_j)\epsilon_j +(\sum_{1\leq j \leq m} (a_j -b_j)) \delta_n, a_j \geq 0, b_j \geq 0 \, \forall j $.

  Let $ w \in W_L $ so that $Q_0(w\mu -(\lambda + \rho_n))\not= 0$. Hence, $\mu =w^{-1}(\lambda +\rho_n +B)$, with $B$ a sum of roots in $\Psi_\lambda\cap \Phi_n$.  Thus, $w^{-1}$ is the unique element in $W_L$ that takes $\lambda +\rho_n +B $  to the Weyl chamber determined by $\Psi_\lambda \cap \Phi_c$.

   Let $w_1 \in W_U(\Psi_{H_0,\lambda})$ so that $ w_1(\lambda +\rho_n +B)$ is $  \Psi_{H_0,\lambda} \cap \Phi_U  $-dominant. Next we verify $w_1(\lambda +\rho_n +B)$ is $\Psi_{H_0,\lambda} \cap \Phi_c$-dominant. For this, we fix $\alpha \in  \Psi_{H_0,\lambda}  \cap\Phi_c    \backslash \Phi_U$ and check $(w_1(\lambda +\rho_n +B),\alpha)>0 $. $\alpha =\epsilon_i +\epsilon_j  $,  and $  w_1 \in W_U(\Psi_{H_0,\lambda})$,   hence, $w_1^{-1}(\alpha)=\epsilon_r +\epsilon_s $ belongs to $\Psi_{H_0,\lambda}$. Thus,
$(w_1(\lambda +\rho_n +B),\alpha)=( \lambda +\rho_n +B, w_1^{-1}\alpha)= ( \lambda , w_1^{-1}\alpha) +(\rho_n, w_1^{-1}\alpha) +(B, w_1^{-1}\alpha) $, the first summand is positive because $\lambda$ is $\Psi_{H_0,\lambda}$-dominant, the second and third   are nonnegative. Therefore, $w^{-1}=w_1$ and  we have shown {\it Remark}~\ref{obs:wuforsum} for $\mathfrak g=\mathfrak {so}(2m,2n)$.

 Whence, we have concluded the proof of Lemma~\ref{lem:multshift}. \end{proof}

\begin{lem}\footnote{We thank the referee for improving this Lemma}\label{lem:equalm} We recall  $\Lambda_2=\lambda_2 +(\rho_n^\lambda)_2$.  We claim: \\
\phantom{xxxxxxxxxxx} $m^{K_2,L\cap K_2}(\Lambda_2,\nu_2^\prime +q_\mathfrak u(\rho_n^\lambda)_2)=m^{K_2,L\cap K_2}(\lambda_2,\nu_2^\prime)$.\\
\phantom{xxxxxxxxxxx} $Spec_{L\cap K_2}(\pi_{\Lambda_2}^{K_2})= Spec_{L\cap K_2}(\pi_{\lambda_2}^{K_2})+ q_\mathfrak u (\rho_n^\lambda)_2$.
\end{lem}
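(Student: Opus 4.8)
The plan is to derive both identities from a single character twist on $K_2$, once the nature of $(\rho_n^\lambda)_2$ is settled. If $(\rho_n^\lambda)_2=0$, there is nothing to prove: then $\Lambda_2=\lambda_2$ and $q_\mathfrak u(\rho_n^\lambda)_2=0$, and both displayed equalities are tautologies. So the content lies in the case $(\rho_n^\lambda)_2\ne 0$, and the first task is to identify that case.

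First I would show $(\rho_n^\lambda)_2\in\mathfrak z_\mathfrak k^\star$, i.e.\ that it is the differential of a character of $K_2$. The conceptual reason: by the construction of the decomposition $\mathfrak k=\mathfrak k_1\oplus\mathfrak k_2$ in \cite{DV}, the semisimple part $[\mathfrak k_2,\mathfrak k_2]$ is orthogonal to $\Phi_n$, so for every root $\alpha$ of $[\mathfrak k_2,\mathfrak k_2]$ one has $(\rho_n^\lambda,\alpha)=\tfrac12\sum_{\beta\in(\Psi_\lambda)_n}(\beta,\alpha)=0$; hence the $\mathfrak t_2^\star$-component $(\rho_n^\lambda)_2$ is orthogonal to all roots of $[\mathfrak k_2,\mathfrak k_2]$ and lies in $\mathfrak z_{\mathfrak k_2}^\star\subset\mathfrak z_\mathfrak k^\star$. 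Inspecting Tables 1, 2 and 3 of \cite{Vaint} one sees, moreover, that this component is actually nonzero only for $\mathfrak g=\mathfrak{su}(m,n)$ (and then $\mathfrak u=\mathfrak t$, so $q_\mathfrak u$ is the identity on $\mathfrak t^\star$ and the shift in the statement is literally $(\rho_n^\lambda)_2$). This case-by-case check against the tables is the only laborious point, and I expect it to be the main obstacle.

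The rest is then a one-line twist. Since $\lambda$ is $\Delta$-dominant regular, $\lambda_2$ is a regular $\Delta\cap\Phi(\mathfrak k_2,\mathfrak t_2)$-dominant Harish-Chandra parameter for $K_2$, so $\pi_{\lambda_2}^{K_2}$ is a genuine irreducible representation of $K_2$; the same holds for $\pi_{\Lambda_2}^{K_2}$. Their highest weights differ by $\Lambda_2-\lambda_2=(\rho_n^\lambda)_2$, an integral weight that is central by the previous step, hence exponentiates to a character $\chi$ of $K_2$; comparing highest weights gives $\pi_{\Lambda_2}^{K_2}\cong\pi_{\lambda_2}^{K_2}\otimes\chi$. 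Restricting to $L\cap K_2$ — and noting that $\chi|_{L\cap K_2}$ is the character of weight $q_\mathfrak u(\rho_n^\lambda)_2$ — one gets $res_{L\cap K_2}(\pi_{\Lambda_2}^{K_2})\cong res_{L\cap K_2}(\pi_{\lambda_2}^{K_2})\otimes\chi|_{L\cap K_2}$. Tensoring an irreducible $\pi_{\nu_2^\prime}^{L\cap K_2}$ by $\chi|_{L\cap K_2}$ shifts its highest weight, hence its Harish-Chandra parameter, by $q_\mathfrak u(\rho_n^\lambda)_2$, and the shifted parameter is still dominant regular (one adds a vector orthogonal to the roots of $L\cap K_2$), so $\pi_{\nu_2^\prime}^{L\cap K_2}\otimes\chi|_{L\cap K_2}\cong\pi_{\nu_2^\prime+q_\mathfrak u(\rho_n^\lambda)_2}^{L\cap K_2}$. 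Reading off multiplicities from the two decompositions yields $m^{K_2,L\cap K_2}(\Lambda_2,\nu_2^\prime+q_\mathfrak u(\rho_n^\lambda)_2)=m^{K_2,L\cap K_2}(\lambda_2,\nu_2^\prime)$, and taking the supports of these multiplicity functions gives $Spec_{L\cap K_2}(\pi_{\Lambda_2}^{K_2})=Spec_{L\cap K_2}(\pi_{\lambda_2}^{K_2})+q_\mathfrak u(\rho_n^\lambda)_2$ — the two asserted equalities.
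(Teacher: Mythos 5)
Your overall route is the same as the paper's: observe that $(\rho_n^\lambda)_2$ is (when nonzero) the differential of a central character, so that $\pi_{\Lambda_2}^{K_2}\cong\pi_{\lambda_2}^{K_2}\otimes\chi$, and read off both identities by twisting. The paper likewise disposes of the holomorphic and $K$-semisimple cases, reduces via the tables to $\mathfrak{su}(m,n)$ and $\mathfrak{so}(2m,2)$, and for $\mathfrak{su}(m,n)$ writes out $\rho_n^{\Psi_a}$ explicitly to see that its $\mathfrak{t}_2^\star$-component lies in $\mathfrak z_{\mathfrak k}^\star$, concluding exactly with your character twist. So the architecture of your argument is correct and matches the source.

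There is, however, one step whose stated justification fails. You assert that $[\mathfrak k_2,\mathfrak k_2]$ is orthogonal to $\Phi_n$, so that $(\rho_n^\lambda,\alpha)=\tfrac12\sum_{\beta\in(\Psi_\lambda)_n}(\beta,\alpha)$ vanishes \emph{term by term}. That is false: for $\mathfrak g=\mathfrak{su}(m,n)$ with $\Psi_a$ one has $\mathfrak k_2\supset\mathfrak{su}(n)$, and the root $\delta_1-\delta_2$ of $\mathfrak k_2$ pairs nontrivially with the noncompact root $\epsilon_1-\delta_1$. Only the \emph{sum} vanishes, and the correct reason is the one the paper uses: under the admissibility hypothesis the simple roots of $\Psi_\lambda\cap\Phi(\mathfrak k_2,\mathfrak t_2)$ are compact simple roots of $\Psi_\lambda$, and $\rho_n^\lambda=\rho-\rho_c$ is orthogonal to every compact simple root of $\Psi_\lambda$; since these simple roots generate the root system of $\mathfrak k_2$, one gets $(\rho_n^\lambda)_2\in\mathfrak z_{\mathfrak k_2}^\star$. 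With that repair your argument goes through. Do note also that the point you defer as ``the only laborious point'' --- checking against the tables that the nonzero case occurs only for $\mathfrak{su}(m,n)$ (with $\mathfrak u=\mathfrak t$, so $q_{\mathfrak u}$ acts as the identity on the shift), together with the explicit formula for $\rho_n^{\Psi_a}$ and the verification that $\rho_n^{\Psi_\pm}$ for $\mathfrak{so}(2m,2)$ lies entirely in $\mathfrak t_1^\star$ --- is where the paper actually spends its effort; a complete write-up cannot omit it.
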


In fact, when $\Psi_\lambda$ is holomorphic, $\rho_n^\lambda$ is in $\mathfrak z_\mathfrak k =\mathfrak k_1$ hence $(\rho_n^\lambda)_2=0$. In \cite{Vaint} it is shown that when $K$ is semisimple $(\rho_n^\lambda)_2=0$. Actually, this is so, owing that the simple roots for  $\Psi_\lambda \cap \Phi(\mathfrak k_2,\mathfrak t_2)$ are simple roots for $\Psi_\lambda$ and that $\rho_n^\lambda$ is orthogonal to every compact simple root for $\Psi_\lambda$. For general $\mathfrak g$, the previous considerations together with that $(\rho_n^\lambda)_2$ is orthogonal to $\mathfrak k_1$ yields that $(\rho_n^\lambda)_2$ belongs to the dual of  the center of $\mathfrak l \cap \mathfrak k_2$. From Tables 1,2,3 we deduce we are left to analyze $(\rho_n^\lambda)_2$ for $\mathfrak{su}(m,n), \mathfrak{so}(2m,2)$.
For $\mathfrak{so}(2m,2)$ we follow the notation in \ref{tauisirred}, then $\mathfrak t_1^\star=span(\epsilon_1,\dots,\epsilon_m), \mathfrak t_2^\star=span(\delta_1)$ and $\rho_n^{\Psi_{\pm}}=c(\epsilon_1+\dots +\epsilon_m) \in \mathfrak t_1^\star$. For $\mathfrak{su}(m,n)$ we follow the notation in Lemma~\ref{lem:multshift}.  It readily follows that for $1\leq a<m$,   $\rho_n^{\Psi_a}=\frac{n}{m}((m-a)(\epsilon_1+\dots +\epsilon_a) -a(\epsilon_{a+1}+\dots +\epsilon_m))+\frac{2a-m}{2m}((n(\epsilon_1+\dots +\epsilon_m)-m(\delta_1+\dots +\delta_n))$. The first summand is in $(\mathfrak t\cap \mathfrak{su}(m))^\star$, the second summand belongs to $\mathfrak z_\mathfrak k^\star$, thus,
$(\rho_n^{\Psi_a})_2=0$ if and only if $2a=m$. Hence, for $(\mathfrak{su}(2,2m), \mathfrak{sp}(1,m))$, we have $(\rho_n^{\Psi_1})_2=0$. For $(\mathfrak{su}(m,n), \mathfrak{su}(m,k)+ \mathfrak{su}( n-k)+\mathfrak z_\mathfrak l) $, always,  $(\rho_n^{\Psi_a})_2$ determines a character of the center of $\mathfrak k$. In this case,  $\lambda_2=\Lambda_2$ except for  $(\mathfrak{su}(m,n), \mathfrak{su}(m,k)+ \mathfrak{su}( n-k)+\mathfrak z_\mathfrak l), \Psi_a $ and $a\not=  2m$, hence, $\pi_{\Lambda_2}^{K_2}$ is equal to $\pi_{\lambda_2}^{K_2}$ times a central character of $K$. Thus, the equality holds.

As a corollary we obtain: $(\rho_n^\lambda)_2 \not= 0$ forces $\mathfrak g= \mathfrak{ su}(m,n), \mathfrak h=\mathfrak s(\mathfrak{ u}(m,k)+\mathfrak{ u}(n-k)), m>1,n>1, 1\leq k \leq n-1$, $\mathfrak u=\mathfrak t$. Thus, we always have $q_\mathfrak u((\rho_n^\lambda)_2)=(\rho_n^\lambda)_2$.

\subsubsection{Conclusion proof of  Theorem~\ref{prop:equaldim}} We just put
 together Lemma~\ref{lem:multshift}, Lemma~\ref{lem:ddzhfinite} and Lemma~\ref{lem:equalm}, hence,   we obtain the equalities we were searching for.  This concludes the proof of Theorem~\ref{prop:equaldim}.

\subsubsection{Existence of $D$}
To follow we show the existence of the isomorphism $D$ in Theorem~\ref{prop:equaldim} $ii)$ and derive the decomposition into irreducible factors of the semisimple  $\mathfrak h_0$-module $\mathcal U(\mathfrak h_0)W$. On the mean time, we also consider some particular cases of Theorem~\ref{prop:equaldim}. Before, we proceed we comment on the structure of the representation $\tau$.

\subsubsection{Representations $\pi_\lambda$ so that $res_L(\tau)$ is irreducible} \label{tauisirred}  Under our $H$-admissibility hypothesis of $\pi_\lambda$ we analyze the cases so that the representation $res_L(\tau)$ is irreducible. The next structure statements are verified in \cite{Vaint}.  To begin with,  we recall the decomposition $K=K_1Z_K K_2 $, (this is not a direct product!, $Z_K$ connected center of $K$)  and the direct product $K=K_1K_2$, we also recall that actually, either $K_1$ or $K_2$ depend on $\Psi_\lambda$.  When $\pi_\lambda$ is a holomorphic representation $K_1=Z_K$ and $\mathfrak k_2=[\mathfrak k,\mathfrak k]$;   when, $Z_K$ is nontrivial and $\pi_\lambda$ is not a holomorphic representation we have $Z_K \subset K_2$; for $\mathfrak g=\mathfrak{su}(m,n), \mathfrak h=\mathfrak{su}(m,k)+\mathfrak{su}(n-k)+\mathfrak{z}_L$,  we have $\mathbf T\equiv Z_K \subset Z_L\equiv \mathbf T^2$. Here, $Z_K\subset L$, and, $\tau_{\vert_L}$ irreducible, forces $\tau=\pi_{\Lambda_1}^{SU(m)}\boxtimes \pi_\chi^{Z_K}\boxtimes \pi_{\rho_{SU(n)}}^{SU(n)}$ ; for $\mathfrak g\ncong \mathfrak{su}(m,n)$  and $G/K$ a Hermitian symmetric space, we have to consider the next two examples.

 For both cases we have  $K_2 =Z_K (K_2)_{ss}$ and $Z_K \nsubseteq L$.

 1) When $\mathfrak g=\mathfrak{so}(2m,2), \mathfrak h=\mathfrak{so}(2m,1)$ and $\Psi_\lambda =\Psi_{\pm}$, then $\mathfrak k_1=\mathfrak{so}(2m)$, $\mathfrak k_2=\mathfrak z_K$ and obviously $res_L(\tau)$ is always an irreducible representation. Here, $\pi_{\Lambda_2}^{K_2}$ is one dimensional representation.

\smallskip

2) When $\mathfrak g=\mathfrak{su}(2,2n), \mathfrak h=\mathfrak{sp}(1,n)$, $\Psi_\lambda=\Psi_1$,  then $\mathfrak k_1=\mathfrak{su}_2(\alpha_{max})$, $\mathfrak k_2=\mathfrak{su}(2n)+\mathfrak z_\mathfrak k$,  $\mathfrak l \equiv \mathfrak{su}_2(\alpha_{max}) +\mathfrak{sp}(n)$. Examples of  $\tau_{\vert_L}$ irreducible are\footnote{This was pointed out by the referee},  $\tau=\pi_{b\Lambda_1}^{K_1}\boxtimes \pi_\chi^{Z_K} \boxtimes \pi_{\rho_{SU(2n)}+a\widetilde{\Lambda}_1 }^{ SU(2n)  }$, $\tau=\pi_{b\Lambda_1}^{K_1}\boxtimes \pi_\chi^{Z_K} \boxtimes \pi_{\rho_{ SU(2n)}+a \widetilde{\Lambda}_{2n-1} }^{ SU(2n)  }, a\geq 0, b\geq 2$, $\Lambda_1$ (resp. $\widetilde \Lambda_1$) is the highest weight of the  first fundamental representation of $\mathfrak{su}_2$ (resp. $\mathfrak{su}(2n)$), $\widetilde{\Lambda}_{2n-1}$ is the highest weight of the dual representation to the first fundamental representation for $\mathfrak{su}(2n)$.

\smallskip

For $ \mathfrak g=\mathfrak{so}(2m,2n), \mathfrak h=\mathfrak{so}(2m,2n-1), n>1,$ $\Psi_\lambda \cap \Phi_n =\{ \epsilon_i \pm \delta_j \},$ $\mathfrak k_1=\mathfrak{so}(2m)$, and if $\lambda$ is so that  $\lambda +\rho_n^\lambda=ic(\tau) =(\sum c_i \epsilon_i, k(\delta_1 +\dots +\delta_{ n-1}  \pm \delta_n) ) +\rho_K $,  then $res_L(\tau)$ is irreducible and $\pi_{\Lambda_2}^{K_2}=\pi_{ k(\delta_1 +\dots +\delta_{ n-1}  \pm \delta_n)   +\rho_{K_2}}^{K_2}$ is not a one dimensional representation for  $k>0$. It follows from the classical branching laws that these are the unique $\tau's$ such that $res_L(\tau)$ is irreducible.

\smallskip
We do not know the pairs $(\pi_\lambda, \tau) $ so that $\pi_\lambda$ is $H$-admissible and
   $res_L(\tau)$ is   irreducible. We believe that for $\mathfrak g \ncong \mathfrak{so}(2m,2n)$ or $\mathfrak g \ncong \mathfrak{su}(m, n)$  we could  conclude that $\tau$ is the tensor product of a irreducible representation of $K_1$ times a one dimensional representation of $K_2$. That is,
$\tau \equiv \pi_{\phi_1}^{K_1}\boxtimes \pi_{\rho_{K_2}}^{K_2}\otimes \pi_{\chi}^{Z_{K_2}}$.

\smallskip
In \ref{sub:existencep} we show that whenever a symmetric pair $(G,H)$ is so that some Discrete Series for $G$ is $H$-admissible, then there exists $H$-admissible Discrete Series for $G$ so that its lowest $K$-type restricted to $L$  is irreducible.

\subsubsection{Analysis of $\mathcal U(\mathfrak h_0)W$, $\mathcal L_\lambda$, existence of $D$,   case $\tau_{\vert_L}$ is irreducible}\label{tauirred} As before, our hypothesis is $(G,H)$ is a symmetric space and $\pi_\lambda^G$ is $H$-admissible. For this paragraph we add the hypothesis  $\tau_{\vert_L}=res_L(\tau)$ is irreducible. We recall that $\mathcal U(\mathfrak h_0)W=L_{\mathcal U (\mathfrak h_0)} (H^2(G,\tau)[W])$, $\mathcal L_\lambda= \oplus_{\mu \in Spec_H(\pi_\lambda)} H^2(G,\tau)[V_\mu^H][V_{\mu +\rho_n^\mu}^L] $. We claim:

\smallskip

   a) if a $H$-irreducible discrete factor of $V_\lambda^G$ contains a copy of $\tau_{\vert L}$, then   $\tau_{\vert L}$ is the lowest $L$-type of such factor.

    b) the multiplicity of $res_L(\tau)$ in $H^2(G,\tau)$ is one.

c) $\mathrm{Cl}(\mathcal U(\mathfrak h_0)W)$ is $H_0$-equivalent to $H^2(H_0,\tau)$.

d) $\mathcal L_\lambda$ is $L$-equivalent to $H^2(H_0,\tau)_{L-fin}$.

e) $\mathcal L_\lambda$ is $L$-equivalent to $ \mathcal U(\mathfrak h_0)W$. Thus, $D$ exists.

\smallskip

We rely on:

\begin{rmk}\label{ktypessch} 1) Two Discrete Series for $H$ are equivalent if and only if their respective lowest $L$-types are equivalent. \cite{VoT}.

2)\label{ktypessch} For any Discrete Series $\pi_\lambda$,  the highest weight (resp. infinitesimal character) of any $K$-type is equal to the highest weight of the lowest $K$-type (resp. the infinitesimal character of the lowest $K$-type) plus a sum of noncompact roots in $\Psi_\lambda$ \cite[Lemma 2.22]{Sc}.
\end{rmk}
 From now on $ic(\phi)$ denotes the infinitesimal character (Harish-Chandra parameter) of the representation $\phi$.

Let $V_\mu^H$ a discrete factor for $res_H(\pi_\lambda)$ so that $\tau_{\vert_L}$ is a $L$-type.
Then, Theorem~\ref{prop:equaldim} implies $V_{\mu+\rho_n^H}^L$ is a $L$-type for $H^2(H_0,\tau)$. Hence, after we apply {\it Remark}~\ref{ktypessch},  we obtain

$\mu +\rho_n^H + B_1 =ic(\tau_{\vert_L})$ with $B_1$ a sum of roots in $\Psi_{H,\lambda}\cap \Phi_n$.

  $\mu +\rho_n^H=ic(\tau_{\vert_L})+ B_0$ with $B_0$ a sum of roots in $\Psi_{H_0,\lambda}\cap \Phi_n$.

Thus, $B_0 +B_1 = 0$, whence $B_0=B_1=0$  and $\mu +\rho_n^H
 =ic(\tau_{\vert_L})$, we have verified a).

Due to $H$-admissibility hypothesis, we have $\mathcal U(\mathfrak h)W$ is a finite sum of irreducible underlying modules of Discrete Series for $H$. Now, Corollary 1 to Lemma~\ref{lem:injecrh}, yields that  a copy of a $V_\mu^H$  contained in $\mathcal U(\mathfrak h)W$  contains a copy of $V_\lambda^G[W]$. Thus, a) implies $\tau_{\vert_L}$ is the lowest $L$-type of such $V_\mu^H$. Hence, $H^2(H,\tau)$ is nonzero. Now, Theorem~\ref{prop:equaldim} together with the fact that the lowest $L$-type of a Discrete Series has multiplicity one  yields that $\dim  Hom_H(H^2(H,\tau), V_\lambda^G) =1$. Also, we obtain $\dim  Hom_{H_0}(H^2(H_0,\tau), V_\lambda^G) =1$. Thus, whenever $\tau_{\vert L}$ occurs in $res_L(V_\lambda^G)$, we have  $\tau_{\vert_L}$ is realized in $V_\lambda^G[W]$. In other words, the isotypic compoent $V_\lambda^G[\tau_{\vert_L}]\subset V_\lambda^G[W]$.  Hence, b) holds.

Owing our hypothesis, we may write $\mathcal U(\mathfrak h_0)W=N_1+...+N_k$, with each $N_j$ being the underlying Harish-Chandra module of a irreducible square integrable representation for $H_0$. Since Lemma~\ref{lem:injecrh} shows $r_0$ is injective in $\mathcal U(\mathfrak h_0)W$,  we have $r_0(\mathrm{Cl}(N_j))$ is a Discrete Series in $L^2(H_0 \times_{res_L(\tau)} W)$, hence Frobenius reciprocity implies    $\tau_{\vert_L}$ is a $L$-type for $N_j$. Hence, b) and a) yields $\mathcal U(\mathfrak h_0)W$ is $\mathfrak h_0$-irreducible and c) follows.

By definition, the subspace $\mathcal L_\lambda$ is the linear span of the subspaces $V_\lambda^G[V_\mu^H][V_{\mu+\rho_n^H}^L]$ with $\mu \in Spec_H(\pi_\lambda)$. Since, \begin{eqnarray*}\lefteqn{\dim Hom_L(V_{\mu +\rho_n^H}^L ,  V_\lambda^G[V_\mu^H][V_{\mu+\rho_n^H}^L]) = \dim  Hom_H(V_\mu^H, V_\lambda^G)}    & & \\ & &   =\dim  Hom_L(V_{\mu +\rho_n^H}^L, H^2(H_0, \tau))=\dim Hom_L ( V_{\mu +\rho_n^H}^L,  H^2(H_0, \tau)[V_{\mu +\rho_n^H}^L]), \end{eqnarray*} and, both $L$-modules are isotypical,   d) follows. Finally,  e) follows from c) and d).

\medskip

 Whenever   $\pi_{\Lambda_2}^{K_2}$  is the trivial representation,    Theorem~\ref{prop:equaldim} and Lemma~\ref{lem:multshift} justifies:
\begin{eqnarray*}\lefteqn{\dim  Hom_H( V_\mu^H, V_\lambda^G)= \dim  Hom_L(  V_{\mu+\rho_n^H}^L , V_{(\lambda_1,\rho_{K_2\cap L})+\rho_n^H }^{H_0})} & & \\ & & = \dim  Hom_L(  V_{\mu+\rho_n^H}^L , H^2(H_0,\tau))= \dim  Hom_L(  V_{\mu}^L , V_{(\lambda_1,\rho_{K_2\cap L})}^{H_0}),\end{eqnarray*}

\noindent
       the infinitesimal character of  $H^2(H_0, \tau)$ is $(\lambda_1, \rho_{K_2\cap L})+q_\mathfrak u(\rho_n^\lambda)-\rho_n^{H_0}=(\lambda_1, \rho_{K_2\cap
        L})+\rho_n^H$.
Thus,  $H^2(H_0, \tau)\equiv V_{(\lambda_1, \rho_{K_2\cap L})+\rho_n^{H}}^{H_0} $.

\subsubsection{
Analysis of $\mathcal U(\mathfrak h_0)W$, $\mathcal L_\lambda$, existence of $D$, for general $(\tau, W)$}

  We recall that by definition,   $\mathcal L_\lambda= \oplus_{\mu \in Spec_H(\pi_\lambda)} H^2(G,\tau)[V_\mu^H][V_{\mu +\rho_n^\mu}^L] $,  $\mathcal U(\mathfrak h_0)W=L_{\mathcal U(\mathfrak h_0)} (H^2(G,\tau)[W])$.
\begin{prop}\label{prop:gentau}  The hypothesis is: $(G,H)$ is a symmetric pair and $\pi_\lambda$ a $H$-admissible square integrable representation  of lowest $K$-type $(\tau,W)$.   We write\\ \phantom{xxxxx} $res_L(\tau)=q_1 \sigma_1 +\cdots + q_r \sigma_r, $ with $(\sigma_j, Z_j)\in \widehat L, q_j >0$. Then,

   a) if a $H$-irreducible discrete factor for $res_H(\pi_\lambda)$  contains a copy of $\sigma_j$, then   $\sigma_j$ is the lowest $L$-type of such factor.

    b) the multiplicity of $\sigma_j $ in $res_L(H^2(G,\tau))$ is equal to $q_j$.

c) $r_0: \mathrm{Cl}(\mathcal U(\mathfrak h_0)W)  \rightarrow \mathbf H^2(H_0,\tau)$
is a $H_0$-equivalence.

d) $\mathcal L_\lambda$ is $L$-equivalent to $\mathbf H^2(H_0,\tau)_{L-fin}$.

e) $\mathcal L_\lambda$ is $L$-equivalent to $ \mathcal U(\mathfrak h_0)W$. Therefore, $D$ exists.

\end{prop}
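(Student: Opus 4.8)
The plan is to transfer, \emph{mutatis mutandis}, the argument of the $res_L(\tau)$-irreducible case of subsection~\ref{tauirred}, now keeping track of the multiplicities $q_j$. Since $K_1=K_1(\Psi_\lambda)\subset L$ we have $\tau=\pi_{\Lambda_1}^{K_1}\boxtimes\pi_{\Lambda_2}^{K_2}$, and by~\eqref{eq:resKL1} each $\sigma_j$ is of the form $\pi_{\Lambda_1}^{K_1}\boxtimes\pi_{\nu_2^{(j)}}^{L\cap K_2}$ with pairwise distinct $\nu_2^{(j)}$, so that the $K_1$-part is common to all $j$. Throughout I will use: (i) Theorem~\ref{prop:equaldim} and its Corollary, giving $Spec_H(\pi_\lambda)+\rho_n^H=Spec_L(\mathbf H^2(H_0,\tau))$ with matching multiplicities, together with its analogue for the pair $(G,H_0)$ (legitimate since $(G,H_0)$ is symmetric, $\pi_\lambda$ is $H_0$-admissible, and $H$ is the associated subgroup of $H_0$); and (ii) Remark~\ref{ktypessch}, by which the infinitesimal character of any $L$-type of a Discrete Series of $H$ (resp. of $H_0$) equals that of the lowest $L$-type plus a nonnegative integral combination of roots in $\Psi_{H,\lambda}\cap\Phi_n$ (resp. $\Psi_{H_0,\lambda}\cap\Phi_n$), and that a Discrete Series is determined by its lowest $L$-type.

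To prove a), fix a discrete factor $V_\mu^H$ of $res_H(\pi_\lambda)$ containing $\sigma_j$. Applying Remark~\ref{ktypessch} in $H$ gives $ic(\sigma_j)=(\mu+\rho_n^H)+B_1$ with $B_1$ a nonnegative sum of roots in $\Psi_{H,\lambda}\cap\Phi_n$. On the other hand, by the Corollary to Theorem~\ref{prop:equaldim} the $L$-type $V_{\mu+\rho_n^H}^L$ occurs in $\mathbf H^2(H_0,\tau)=\bigoplus_k q_k H^2(H_0,\sigma_k)$, hence in some $H^2(H_0,\sigma_k)$, and Remark~\ref{ktypessch} in $H_0$ gives $\mu+\rho_n^H=ic(\sigma_k)+B_0$ with $B_0$ a nonnegative sum of roots in $\Psi_{H_0,\lambda}\cap\Phi_n$. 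Thus $ic(\sigma_j)-ic(\sigma_k)=B_0+B_1$. Since $ic(\sigma_j)-ic(\sigma_k)=\nu_2^{(j)}-\nu_2^{(k)}$ is a difference of two dominant $L\cap K_2$-weights occurring in $\pi_{\Lambda_2}^{K_2}$, while $B_0+B_1$ is a nonnegative combination of noncompact positive roots, the cancellation $B_0+B_1=0$ (hence $j=k$ and $\mu+\rho_n^H=ic(\sigma_j)$) follows from the positivity analysis already carried out in the proof of Lemma~\ref{lem:multshift}: the disjoint decomposition $\Phi_n=(\Psi_{H,\lambda}\cap\Phi_n)\sqcup(\Psi_{H_0,\lambda}\cap\Phi_n)$ in the Borel--de Siebenthal case, and the case-by-case $q_\mathfrak u$-tracking of Remark~\ref{obs:wuforsum} for the pairs attached to $\mathfrak{su}(m,n)$, $\mathfrak{so}(2m,2)$ and $\mathfrak{so}(2m,2n)$. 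This also shows $\{ic(\sigma_1),\dots,ic(\sigma_r)\}$ is an antichain for the order induced by the noncompact roots, and that no $\sigma_j$ appears as a non-lowest $L$-type of any $H^2(H_0,\sigma_k)$. For b): by a), every occurrence of $\sigma_j$ in $res_L(H^2(G,\tau))$ sits inside an $H$-discrete factor $V_\mu^H$ with $\mu+\rho_n^H=ic(\sigma_j)$ and contributes exactly once (lowest $L$-type, multiplicity one); summing $m^{G,H}(\lambda,\mu)$ over such $\mu$ and invoking Theorem~\ref{prop:equaldim} (the antichain remark decoupling the summands of $\mathbf H^2(H_0,\tau)$) produces the value $q_j$.

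For c): $r_0$ is $H_0$-equivariant, and the Corollary to Lemma~\ref{lem:injecrh} applied with $H$ replaced by $H_0$ shows $r_0$ is injective on $\mathrm{Cl}(\mathcal U(\mathfrak h_0)W)$ (its kernel is the orthocomplement of $\mathrm{Cl}(\mathcal U(\mathfrak h_0)W)$). By $H_0$-admissibility, $\mathcal U(\mathfrak h_0)W$ is a finite sum of underlying Harish-Chandra modules of Discrete Series of $H_0$; by that same Corollary each summand contains some $\sigma_j$, which by a) applied to $(G,H_0)$ is its lowest $L$-type. Since $W\subset\mathcal U(\mathfrak h_0)W\subset H^2(G,\tau)$, and $W\cong res_L(\tau)$ contains $\sigma_j$ with multiplicity $q_j$ while, by b), so does $H^2(G,\tau)$, the multiplicity of $\sigma_j$ in $\mathcal U(\mathfrak h_0)W$ is exactly $q_j$; as $\sigma_j$ occurs in each summand only as its lowest $L$-type (antichain remark), exactly $q_j$ of the summands have lowest $L$-type $\sigma_j$, and Remark~\ref{ktypessch} identifies $\mathcal U(\mathfrak h_0)W\cong\bigoplus_j q_j\,H^2(H_0,\sigma_j)_{L-fin}$ as $(\mathfrak h_0,L)$-modules. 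Passing to closures, $\mathrm{Cl}(\mathcal U(\mathfrak h_0)W)$ is abstractly $\bigoplus_j q_j H^2(H_0,\sigma_j)$; and since Frobenius reciprocity for $L^2(H_0\times_\tau W)$, together with the antichain remark, shows the $H^2(H_0,\sigma_j)$-isotypic component of $L^2(H_0\times_\tau W)_{H_0-disc}$ has multiplicity exactly $q_j$, the image $r_0(\mathrm{Cl}(\mathcal U(\mathfrak h_0)W))$ must fill it out and hence equal $\mathbf H^2(H_0,\tau)$. For d): as an $L$-module, $\mathcal L_\lambda=\bigoplus_{\mu\in Spec_H(\pi_\lambda)}H^2(G,\tau)[V_\mu^H][V_{\mu+\rho_n^\mu}^L]\cong\bigoplus_\mu m^{G,H}(\lambda,\mu)\,V_{\mu+\rho_n^\mu}^L$ (the lowest $L$-type has multiplicity one in $V_\mu^H$, the distinct $H$-isotypic components are $L$-independent, and $\rho_n^\mu=\rho_n^H$ for all relevant $\mu$ by Theorem~\ref{prop:GWKO}), and Theorem~\ref{prop:equaldim} with its Corollary identifies the multiplicities with those of $\mathbf H^2(H_0,\tau)_{L-fin}$. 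Finally e) is immediate from c) and d): $\mathcal U(\mathfrak h_0)W\cong\mathbf H^2(H_0,\tau)_{L-fin}\cong\mathcal L_\lambda$ as $L$-modules, and restricting any such $L$-isomorphism to the $V_{\mu+\rho_n^\mu}^L$-isotypic subspaces --- which equal $H^2(G,\tau)[V_\mu^H][V_{\mu+\rho_n^\mu}^L]$ and $(\mathcal U(\mathfrak h_0)W)[V_{\mu+\rho_n^\mu}^L]$ respectively, by the multiplicity bookkeeping above --- yields the map $D$ of Theorem~\ref{prop:Rwelldefgc}.

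The step I expect to be the main obstacle is the positivity input to a): that the correction terms $B_0$ (built from $\mathfrak h_0$-noncompact roots) and $B_1$ (built from $\mathfrak h$-noncompact roots) cannot cancel against the $K_2$-weight difference $ic(\sigma_j)-ic(\sigma_k)$ unless all three vanish. For $\mathfrak u=\mathfrak t$ and Borel--de Siebenthal $\Psi_\lambda$ this is immediate from $\Phi_n=(\Psi_{H,\lambda}\cap\Phi_n)\sqcup(\Psi_{H_0,\lambda}\cap\Phi_n)$ and the fact that the $\sigma_j$ differ only in their $L\cap K_2$-constituent; for the pairs attached to $\mathfrak{su}(m,n)$ and $\mathfrak{so}(2m,2)$ with the exceptional systems $\Psi_a$, $\tilde\Psi_b$, $\Psi_\pm$, and for the $\mathfrak{so}(2m,2n)$ pairs, it is exactly the case-by-case analysis of $q_\mathfrak u$-images carried out in the proof of Lemma~\ref{lem:multshift} and Remark~\ref{obs:wuforsum}. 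Everything past a) and b) is linear algebra together with Lemma~\ref{lem:injecrh}, Theorem~\ref{prop:equaldim} and Frobenius reciprocity.
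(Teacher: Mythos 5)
Your proposal follows the same route as the paper's proof: a) via the two applications of Remark~\ref{ktypessch} combined with Theorem~\ref{prop:equaldim} (giving $B_0+B_1=ic(\sigma_j)-ic(\sigma_k)$), b) by locating every occurrence of $\sigma_j$ as a lowest $L$-type and reading off the multiplicity from the dimension formula, c) via the injectivity from Lemma~\ref{lem:injecrh} together with the decomposition of $\mathcal U(\mathfrak h_0)W$ into underlying modules of Discrete Series of $H_0$, and d), e) by matching multiplicities. The one point where you diverge --- and which you yourself flag as the main obstacle --- is the justification of the cancellation $B_0+B_1=0$ in a). You attribute it to the positivity analysis of Lemma~\ref{lem:multshift} and Remark~\ref{obs:wuforsum}, but those arguments settle a different question (which $w\in W_L$ contribute nontrivially to Blattner's formula) and do not by themselves deliver the statement you need; in particular the disjointness $\Phi_n=(\Psi_{H,\lambda}\cap\Phi_n)\sqcup(\Psi_{H_0,\lambda}\cap\Phi_n)$ alone does not rule out a nonzero sum. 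The paper's actual argument is simpler and uniform, with no Borel--de Siebenthal dichotomy and no case-by-case analysis: writing $\sigma_s=\pi_{\Lambda_1}^{K_1}\boxtimes\phi_s$ with $\phi_s$ an irreducible constituent of $res_{L\cap K_2}(\pi_{\Lambda_2}^{K_2})$, the difference $ic(\sigma_j)-ic(\sigma_k)=ic(\phi_j)-ic(\phi_k)$ is a difference of sums of roots in $\Phi(\mathfrak k_2,\mathfrak t\cap\mathfrak k_2)$, and since the simple roots of $\Psi_\lambda\cap\Phi(\mathfrak k_2,\mathfrak t\cap\mathfrak k_2)$ are \emph{compact} simple roots of $\Psi_\lambda$, this difference is an integer combination of compact simple roots; whereas $B_0+B_1$ is a sum of noncompact positive roots, and any nonempty such sum has a strictly positive coefficient on some noncompact simple root when expanded in the simple-root basis. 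Hence $B_0=B_1=0$, $i=j$, and $\mu+\rho_n^H=ic(\sigma_j)$. With this repair of the key step, the rest of your argument closes essentially as in the paper.
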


\begin{proof} Let $V_\mu^H$ a discrete factor for $res_H(\pi_\lambda)$ so that some irreducible factor of $\tau_{\vert_L}$ is a $L$-type.
Then, Theorem~\ref{prop:equaldim} implies $V_{\mu+\rho_n^H}^L$ is a $L$-type for $\mathbf H^2(H_0,\tau)=\oplus_j q_j H^2(H_0,\sigma_j)$. Let's say $V_{\mu+\rho_n^H}^L$ is a subrepresentation of $H^2(H_0,\sigma_i)$.  We recall $ic(\phi)$ denotes the infinitesimal character (Harish-Chandra parameter) of the representation $\phi$. Hence, after we apply {\it Remark}~\ref{ktypessch} we obtain

$\mu +\rho_n^H + B_1 =ic(\sigma_j)$ with $B_1$ a sum of roots in $\Psi_{H,\lambda}\cap \Phi_n$.

  $\mu +\rho_n^H=ic(\sigma_i)+ B_0$ with $B_0$ a sum of roots in $\Psi_{H_0,\lambda}\cap \Phi_n$.

Thus, $B_0 +B_1 = ic(\sigma_j)-ic(\sigma_i)  $. Now, since $\mathfrak k=\mathfrak k_1 +\mathfrak k_2$, $\mathfrak k_1 \subset \mathfrak l$, $\tau=\pi_{\Lambda_1}^{K_1} \boxtimes \pi_{\Lambda_2}^{K_2}$, we may write  $\sigma_s=\pi_{\Lambda_1}^{K_1}\boxtimes \phi_s, $ with $\phi_s \in \widehat {L\cap K_2}$, hence, $ic(\sigma_j)-ic(\sigma_i)=ic(\phi_j)-ic(\phi_i)$. Since, each $\phi_t$ is a irreducible factor of $res_{L\cap K_2}(\pi_{\Lambda_2}^{K_2})$, we have $ic(\phi_j)-ic(\phi_i)$ is equal to the difference of two sum  of roots in $\Phi (\mathfrak k_2, \mathfrak t \cap \mathfrak k_2)$. The hypothesis forces that the simple roots for $\Psi_\lambda \cap \Phi (\mathfrak k_2, \mathfrak t \cap \mathfrak k_2)$ are compact simple roots for $\Psi_\lambda$ (see \cite{DV}) whence $ic(\sigma_j)-ic(\sigma_i)$ is a linear combination of compact simple roots for $\Psi_\lambda$. On the other hand, $B_0 +B_1$ is a sum of  noncompact roots in $\Psi_\lambda$. Now $B_0 +B_1  $ can not be a linear combination of compact simple roots, unless  $B_0=B_1=0$. Thus, $ic(\sigma_i)=ic(\sigma_j)$ and $Z_j \equiv V_{\sigma_j}^L$ is the lowest $L$-type of   $V_\mu^H$, we have verified a).

\smallskip
Due to $H$-admissibility hypothesis, we have $\mathcal U(\mathfrak h)W$ is a finite sum of irreducible underlying Harish-Chandra modules of Discrete Series for $H$. Thus,  a copy of certain $V_\mu^H$  contained in $\mathcal U(\mathfrak h)W$  contains $W[\sigma_j]$. Whence, $\sigma_j$ is the lowest $L$-type of such $V_\mu^H$. Hence, $H^2(H,\sigma_j)$ is nonzero and it is equivalent to a subrepresentation of $\mathrm{Cl}(\mathcal U(\mathfrak h)W)$.

We claim, for $i\not= j$, no $\sigma_j$ is a $L$-type of $\mathrm{Cl}(\mathcal U(\mathfrak h)W)[H^2(H,\sigma_i)]$.

Indeed, if $\sigma_j$ were a $L$-type in $\mathrm{Cl}(\mathcal U(\mathfrak h)W)[H^2(H,\sigma_i)]$, then,  $\sigma_j$ would be a $L$-type of a Discrete Series of lowest $L$-type equal to $\sigma_i$, according to  a) this forces $i=j$, a contradiction.
Now, we compute the multiplicity of $H^2(H,\sigma_j)$ in $H^2(G,\tau)$. For this, we apply Theorem~\ref{prop:equaldim}. Thus,  $\dim  Hom_H(V_\lambda^G, H^2(H,\sigma_j))=\sum_i q_i \dim  Hom_L(\sigma_j, H^2(H,\sigma_i))=q_j$.

In order to realize the isotypic component corresponding to $H^2(H,\sigma_j)$ we write $V_\lambda^G[W][\sigma_j]=R_1 +\cdots +R_{q_j}$ a explicit sum of $L$-irreducibles modules. Then, owing to a), $L_{\mathcal U(\mathfrak h)}(R_r)$ contains a copy $N_r$ of $H^2(H,\sigma_j)$ and  $R_r$ is the lowest $L$-type of $N_r$. Therefore, the multiplicity computation yields $H^2(G,\tau)[H^2(H,\sigma_j)]= N_1 +\cdots +N_{q_j} $. Hence, b) holds.
\noindent
A corollary of this computation is:

\phantom{xxxxxxxxxxxxx} $Hom_H(H^2(H,\sigma_j), (\mathrm{Cl}(\mathcal U(\mathfrak h)W))^\perp ) =\{0\}$.

Verification of c).  After we recall Lemma~\ref{lem:injecrh}, we have $r_0 :\mathrm{Cl}(\mathcal U(\mathfrak h_0)W)\rightarrow  L^2(H_0\times_\tau W)$ is injective and we apply  to the algebra $\mathfrak h:=\mathfrak h_0$, the statement b) together with the computation to show b), we make the choice of the $q_j's$ subspaces  $Z_j$ as a lowest $L$-type  subspace of $W[Z_j]$. Thus, the image via $r_0$ of $\mathcal U(\mathfrak h_0)Z_j$ is a subspace of $L^2(H_0 \times_{ \sigma_j} Z_j)$. Since, Hotta-Parthasarathy \cite{ho}, Atiyah-Schmid \cite{AS},  Enright-Wallach \cite{EW} have shown $H^2(H_0, \sigma_j)$ has multiplicity one in $L^2(H_0 \times_{ \sigma_j} Z_j)$  we obtain the image of $r_0$ is equal to $\mathbf H^2(H_0,\tau)$.

 The proof of d) and e) are word by word as the one for \ref{tauirred}. \end{proof}
\begin{cor} The multiplicity of $H^2(H,\sigma_j)$ in $res_H(H^2(G,\tau))$ is equal to \\ \phantom{xxxxxxxxxxxxxxxxxxxxxxxxxxxxxxxx} $q_j=\dim Hom_L(\sigma_j, H^2(G,\tau))$.
\end{cor}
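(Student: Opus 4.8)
\emph{Proof proposal.} The plan is to obtain this as a bookkeeping consequence of Proposition~\ref{prop:gentau}, essentially by combining its parts a) and b). First I would set up the decomposition: by the $H$-admissibility hypothesis and Theorem~\ref{prop:GWKO}, $res_H(H^2(G,\tau))=\bigoplus_{\mu\in Spec_H(\pi_\lambda)} m_\mu\,V_\mu^H$ is a Hilbert sum of pairwise inequivalent Discrete Series of $H$ with finite multiplicities $m_\mu=\dim Hom_H(V_\mu^H,V_\lambda^G)$; moreover $\pi_\lambda$ is $L$-admissible (see \cite{DV} and \cite[Proposition 2.4]{DGV}), so the $L$-multiplicities appearing below are all finite. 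By Remark~\ref{ktypessch} 1) a Discrete Series of $H$ is determined by its lowest $L$-type, and in the proof of Proposition~\ref{prop:gentau} b) it is shown that $H^2(H,\sigma_j)$ is nonzero and occurs in $res_H(\pi_\lambda)$; write $\mu_j$ for its Harish-Chandra parameter, so $V_{\mu_j}^H=H^2(H,\sigma_j)$ is the unique summand above whose lowest $L$-type is $\sigma_j$.

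Next I would read off the first equality from the multiplicity computation already performed in the proof of Proposition~\ref{prop:gentau} b): $\dim Hom_H(V_\lambda^G,H^2(H,\sigma_j))=\sum_i q_i\,\dim Hom_L(\sigma_j,H^2(H,\sigma_i))=q_j$, the last step because the lowest $L$-type of $H^2(H,\sigma_i)$ equals $\sigma_i$ with multiplicity one. Hence the multiplicity $m_{\mu_j}$ of $H^2(H,\sigma_j)$ in $res_H(H^2(G,\tau))$ equals $q_j$. For the second equality I would restrict the Hilbert sum once more to $L$: $\dim Hom_L(\sigma_j,H^2(G,\tau))=\sum_{\mu} m_\mu\,\dim Hom_L\big(\sigma_j,res_L(V_\mu^H)\big)$. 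By Proposition~\ref{prop:gentau} a), the $L$-type $\sigma_j$ can occur in $res_L(V_\mu^H)$ only when $\sigma_j$ is the lowest $L$-type of $V_\mu^H$, i.e. only for $\mu=\mu_j$, and then it occurs exactly once; so the right-hand side collapses to $m_{\mu_j}=q_j$.

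I do not expect a genuine obstacle here, since the corollary merely repackages Proposition~\ref{prop:gentau}. The one place that deserves a word of care is the interchange of the restriction functors with the infinite direct sum; this is legitimate precisely because the restriction to $H$ is admissible and, by part a), only the single summand $V_{\mu_j}^H$ contributes the $L$-isotype $\sigma_j$, so all the sums involved are in fact finite.
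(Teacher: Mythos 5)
Your argument is correct and takes essentially the same route as the paper: both equalities are exactly the content of Proposition~\ref{prop:gentau} a), b) together with the multiplicity computation $\dim Hom_H(V_\lambda^G,H^2(H,\sigma_j))=\sum_i q_i\,\dim Hom_L(\sigma_j,H^2(H_0,\sigma_i))=q_j$ carried out in the proof of part b), and your use of admissibility to justify exchanging restriction with the Hilbert sum is the right (and only) point of care. The one step you leave slightly underjustified is the collapse of that sum to the $i=j$ term: multiplicity one of the lowest $L$-type handles $i=j$, but the vanishing for $i\neq j$ requires the analogue of part a) for the pair $(G,H_0)$ (equivalently the ``claim'' in the proof of b)), which is the same fact you already invoke for the second equality.
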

\begin{cor}For each $\sigma_j$,   $\mathcal L_\lambda[Z_j]=\mathrm{Cl}(\mathcal U(\mathfrak h_0)W)[Z_j]=H^2(G,\tau)[W][Z_j]="W"[Z_j]$. Thus, we may fix $D=I_{"W"[Z_j]}  : \mathcal L_\lambda[Z_j] \rightarrow \mathrm{Cl}(\mathcal U(\mathfrak h_0)W)[Z_j]$.
\end{cor}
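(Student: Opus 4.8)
The plan is to deduce all three displayed identities directly from Proposition~\ref{prop:gentau} and the Corollary just above, by pinning down, for the fixed index $j$, the $\sigma_j$-isotypic ($=Z_j$-isotypic) component of every space occurring in the chain and showing it coincides with $"W"[Z_j]:=H^2(G,\tau)[W][Z_j]$, the $Z_j$-isotypic part of the lowest $K$-type. Since $res_L(\tau)=\sum_i q_i\sigma_i$, the $L$-module $"W"[Z_j]$ carries $\sigma_j$ with multiplicity exactly $q_j$. First I would check that $"W"[Z_j]$ is contained in each of $\mathcal L_\lambda$, $\mathrm{Cl}(\mathcal U(\mathfrak h_0)W)$ and $H^2(G,\tau)$; then that each of these three spaces carries $\sigma_j$ with multiplicity exactly $q_j$. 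Since the relevant isotypic components are finite-dimensional and nested, the inclusions are then forced to be equalities.

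For the containments: $"W"[Z_j]\subseteq H^2(G,\tau)$ is trivial, and $"W"[Z_j]\subseteq W\subseteq\mathcal U(\mathfrak h_0)W\subseteq\mathrm{Cl}(\mathcal U(\mathfrak h_0)W)$ because $1\in\mathcal U(\mathfrak h_0)$. For $\mathcal L_\lambda$ I would invoke the computation inside the proof of Proposition~\ref{prop:gentau}~b), where $V_\lambda^G[W][\sigma_j]=R_1+\cdots+R_{q_j}$ was realized as the sum of the lowest $L$-type subspaces of the $q_j$ copies of $H^2(H,\sigma_j)$ occurring in $res_H(\pi_\lambda)$; by the very definition of $\mathcal L_\lambda$ this gives $"W"[Z_j]=R_1+\cdots+R_{q_j}\subseteq\mathcal L_\lambda$. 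For the multiplicities: Proposition~\ref{prop:gentau}~b) gives multiplicity $q_j$ for $\sigma_j$ in $res_L(H^2(G,\tau))$; the preceding Corollary together with Proposition~\ref{prop:gentau}~a) shows the only $H$-factors of $res_H(\pi_\lambda)$ whose lowest $L$-type is $\sigma_j$ are the $q_j$ copies of $H^2(H,\sigma_j)$, hence $\sigma_j$ has multiplicity $q_j$ in $\mathcal L_\lambda$; and Proposition~\ref{prop:gentau}~c) identifies $\mathrm{Cl}(\mathcal U(\mathfrak h_0)W)$ $H_0$-equivariantly with $\mathbf H^2(H_0,\tau)=\oplus_i q_i H^2(H_0,\sigma_i)$, where $H^2(H_0,\sigma_i)$ has lowest $L$-type $\sigma_i$ of multiplicity one. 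Here I would rerun the root-theoretic dichotomy of the proof of Proposition~\ref{prop:gentau}~a) (via Remark~\ref{ktypessch}: every $L$-type of $H^2(H_0,\sigma_i)$ has infinitesimal character $ic(\sigma_i)$ plus a sum of noncompact roots in $\Psi_{H_0,\lambda}$, while $ic(\sigma_j)-ic(\sigma_i)$ is a combination of compact simple roots for $\Psi_\lambda$), concluding that $\sigma_j$ occurs in $H^2(H_0,\sigma_i)$ only for $i=j$, so $\sigma_j$ has multiplicity $q_j$ in $\mathrm{Cl}(\mathcal U(\mathfrak h_0)W)$ as well.

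Combining, for each $X\in\{\mathcal L_\lambda,\ \mathrm{Cl}(\mathcal U(\mathfrak h_0)W),\ H^2(G,\tau)\}$ we have $"W"[Z_j]\subseteq X[Z_j]$ with both sides isotypic of the same finite $\sigma_j$-multiplicity $q_j$, hence $"W"[Z_j]=X[Z_j]$; this is the asserted chain of equalities. In particular $\mathcal L_\lambda[Z_j]=\mathrm{Cl}(\mathcal U(\mathfrak h_0)W)[Z_j]="W"[Z_j]$, so the identity map of $"W"[Z_j]$ is a legitimate choice for the $L$-intertwiner $D$ of Theorem~\ref{prop:Rwelldefgc}~ii), carried out block by block over the $\sigma_j$. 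The only genuinely non-formal ingredient is the rigidity that $\sigma_j$ cannot occur spuriously as an $L$-type of a Discrete Series of $H$ or of $H_0$ whose lowest $L$-type is $\sigma_i$ with $i\ne j$; but this is exactly what Proposition~\ref{prop:gentau}~a) establishes, so the present Corollary is essentially organizational, and I expect no obstacle beyond keeping the multiplicity bookkeeping straight.
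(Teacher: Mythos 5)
Your proposal is correct and follows essentially the route the paper intends: the Corollary is a direct bookkeeping consequence of Proposition~\ref{prop:gentau}, and you assemble exactly the right ingredients — the containment of $"W"[Z_j]$ in each space, the realization $V_\lambda^G[W][\sigma_j]=R_1+\cdots+R_{q_j}$ inside $\mathcal L_\lambda$ from the proof of part b), and the multiplicity count $q_j$ — to squeeze all three isotypic components into equality. The only remark is that rerunning the root-theoretic dichotomy for $\mathrm{Cl}(\mathcal U(\mathfrak h_0)W)$ is not needed: since that space sits between $"W"[Z_j]$ and $H^2(G,\tau)$, part b) alone (multiplicity $q_j$ in all of $H^2(G,\tau)$) already forces its $Z_j$-isotypic part to equal $"W"[Z_j]$.
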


\subsection{ Explicit inverse map to $r_0^D$}\label{sub:inverserd}
We consider three cases: $res_L(\tau)$ is irreducible, $res_L(\tau)$ is multiplicity free,  and   general case. Formally, they are quite alike, however, for us it has been illuminating to consider the three cases. As a byproduct, we obtain information on the compositions $r^\star r, r_0^\star r_0$;  a functional equation that must be satisfied by the kernel of a holographic operator;  for some particular discrete factor $H^2(H,\sigma)$ of $res_H(\pi_\lambda)$ the reproducing kernel for $H^2(G,\tau)$ is a extension of the reproducing kernel for $H^2(H,\sigma)$  as well as that the holographic operator from  $H^2(H,\sigma)$ into  $H^2(G,\tau)$ is just plain extension of functions.
\subsubsection{ Case $(\tau, W)$ restricted to $L$    is irreducible} In  Tables 1,2,3,  we show   the list of  the triples $(G,H,\pi_\lambda)$ such that $(G,H)$ is a symmetric pair,   and $\pi_\lambda$ is $H$-admissible. In \ref{sub:existencep} we show that if there exists $(G,H,\pi_\lambda)$ so that $\pi_\lambda$ is $H$-admissible, then there exists a $H$-admissible $\pi_{\lambda'}$ so that its lowest $K$-type restricted to $L$ is irreducible and $\lambda'$ is dominant with respect to $\Psi_\lambda$. We denote by $\eta_0$ the Harish-Chandra parameter for $ H^2(H_0,\tau)\equiv \mathrm{Cl}(\mathcal U(\mathfrak h_0)W)  $.

We set  $d(\pi)$ for the formal degree of a irreducible square integrable representation $\pi$ and define $c=d(\pi_\lambda)\dim  W /d(\pi_{\eta_0}^{H_0})$.  Next,  we show
\begin{prop} \label{prop:ktfromkto}We assume the setting as well as  the hypothesis      in Theorem~\ref{prop:Rwelldefgc}, and further  $(\tau, W)$ restricted to $L$    is irreducible.  \\
  Let $T_0 \in Hom_L(Z,H^2(H_0,\tau))$, then the kernel $K_T$ corresponding to $T:=(r_0^D)^{-1}(T_0) \in Hom_H(H^2(H,\sigma), H^2(G,\tau))$ is  $$K_T(h,x)z=  (D^{-1}[\int_{H_0} \frac{1}{c} K_\lambda (h_0,\cdot)(T_0(z) (h_0)) dh_0 ])(h^{-1}x).$$
\end{prop}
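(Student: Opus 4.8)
The plan is to run the identification $r_0^D$ backwards, step by step, starting from an $L$-morphism $T_0\in Hom_L(Z, H^2(H_0,\tau))$ and producing the kernel $K_T$ of the $H$-morphism $T=(r_0^D)^{-1}(T_0)$. Since $res_L(\tau)$ is irreducible we are in the situation of \ref{tauirred}, so $\mathrm{Cl}(\mathcal U(\mathfrak h_0)W)$ is $H_0$-irreducible and equals $H^2(H_0,\tau)$, the map $D$ may be taken to be the identity on $"W"$ (Remark~\ref{rmk:estruD}), and by part $i)$ of Theorem~\ref{prop:Rwelldefgc} the restriction map $r_0$ is an $H_0$-isomorphism $\mathrm{Cl}(\mathcal U(\mathfrak h_0)W)\to \mathbf H^2(H_0,\tau)=H^2(H_0,\tau)$. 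The first move is to invert $r_0$: given the function $T_0(z)\in H^2(H_0,\tau)$ on $H_0$, I must recognize it as the restriction to $H_0$ of a unique element of $\mathrm{Cl}(\mathcal U(\mathfrak h_0)W)\subset H^2(G,\tau)$. The natural candidate for $r_0^{-1}$ is a superposition of left translates of the reproducing kernel $K_\lambda(\cdot,e)^\star$: since $\mathrm{Cl}(\mathcal U(\mathfrak h_0)W)$ is the closed span of $\{L_{h_0}(K_\lambda(\cdot,e)^\star w): h_0\in H_0,\ w\in W\}$, one expects
$$ (r_0^{-1}g)(x)=\frac1c\int_{H_0} K_\lambda(h_0,x)^\star\,\bigl(\text{something built from }g(h_0)\bigr)\,dh_0, $$
and the constant $c=d(\pi_\lambda)\dim W/d(\pi_{\eta_0}^{H_0})$ is exactly what one gets by comparing the reproducing identity for $H^2(G,\tau)$ restricted to $H_0$ with the reproducing identity for $H^2(H_0,\tau)$, using the formal-degree normalizations of the two Discrete Series (this is the Schur-orthogonality bookkeeping underlying why $r_0^\star r_0$ is a scalar on the irreducible module $\mathrm{Cl}(\mathcal U(\mathfrak h_0)W)$).

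**Key steps.** First, I would establish that $r_0^\star r_0 = c\cdot \mathrm{Id}$ on $\mathrm{Cl}(\mathcal U(\mathfrak h_0)W)$: by $H_0$-equivariance and irreducibility (Schur), $r_0^\star r_0$ is a positive scalar there; evaluating the scalar by pairing $K_\lambda(\cdot,e)^\star w$ with itself, using $(r_0^\star r_0 f)(x)=\int_{H_0}K_\lambda(h_0,x)^\star f(h_0)\,dh_0$ and the formal-degree identities $\|K_\lambda(\cdot,e)^\star w\|^2$-type computations from \cite{OV2}, pins it to $c$. Consequently $r_0^{-1}=\frac1c r_0^\star$ on the relevant subspace, i.e. $(r_0^{-1}g)(x)=\frac1c\int_{H_0}K_\lambda(h_0,x)^\star g(h_0)\,dh_0$, and taking $\star$ and using $K_\lambda(h_0,x)^\star=K_\lambda(x,h_0)$-type symmetry one rewrites this as $\frac1c\int_{H_0}K_\lambda(h_0,\cdot)(g(h_0))\,dh_0$ as in the statement. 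Second, I unwind the definition of $r_0^D$: $T=(r_0^D)^{-1}(T_0)$ means that for every $z$, $r_0(D(K_T(e,\cdot)z))=T_0(z)$, i.e., since $D$ is the identity here, $K_T(e,\cdot)z = r_0^{-1}(T_0(z)) = \frac1c\int_{H_0}K_\lambda(h_0,\cdot)(T_0(z)(h_0))\,dh_0$ — with $D^{-1}$ inserted to cover the general normalization. Third, I extend from $h=e$ to arbitrary $h$ using the covariance $K_T(h,x)=K_T(e,h^{-1}x)$ (recalled in subsection~\ref{sec:kernelprop}), which immediately yields the displayed formula. Finally, I must check the consistency conditions: that the right-hand side genuinely defines a kernel of an $H$-morphism $H^2(H,\sigma)\to H^2(G,\tau)$ — i.e. the $(K,L)$-covariance $K_T(hl,xk)=\tau(k^{-1})K_T(h,x)\sigma(l)$ and the $H$-invariance $K_T(hh_1,hx)=K_T(h_1,x)$ — which follow from the corresponding covariances of $K_\lambda$, the fact that $T_0$ is an $L$-map, and the substitution $h_0\mapsto l^{-1}h_0$ in the integral; and that $r_0^D(T)=T_0$, which is just running the computation forward again. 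Invoking the injectivity of $r_0^D$ (Corollary~\ref{prop:r0dinj}) and the equidimensionality giving surjectivity, this $T$ is the unique preimage.

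**Main obstacle.** The genuinely delicate point is the evaluation of the scalar $c$, equivalently the identity $r_0^\star r_0=c\,\mathrm{Id}$ with the \emph{correct} constant $d(\pi_\lambda)\dim W/d(\pi_{\eta_0}^{H_0})$. This requires matching the reproducing-kernel normalization of $H^2(G,\tau)$ — which encodes $d(\pi_\lambda)$ through $K_\lambda(e,e)=\frac{\dim V_\lambda\text{-type data}}{\dots}$ and the trace $\mathrm{tr}\,K_\lambda(e,e)=d(\pi_\lambda)\dim W$ after integrating — against that of $H^2(H_0,\tau)$, and being careful that the $H_0$-isotypic decomposition of $res_{H_0}\pi_\lambda$ contributes the single relevant summand with its own formal degree $d(\pi_{\eta_0}^{H_0})$. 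Everything else is formal manipulation of covariance relations; the formal-degree computation is where one must be most careful, and where I would lean on the appendices of \cite{OV2} and the Plancherel-theoretic normalization of $r_0$ from \cite{OV1}.
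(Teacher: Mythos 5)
Your proposal is correct and follows essentially the same route as the paper: Schur's lemma on the irreducible module $\mathrm{Cl}(\mathcal U(\mathfrak h_0)W)$ to show $r_0^\star r_0$ is the scalar $c$ (evaluated via spherical functions and the orthogonality relations for matrix coefficients, exactly as in the paper's subsection on the value of $c$), identification of the kernel of $r_0^\star$ with $K_\lambda(h_0,x)$, inversion of $r_0$ by $\frac1c r_0^\star$, and then unwinding $r_0^D(T)=T_0$ together with the covariance $K_T(h,x)=K_T(e,h^{-1}x)$. The only cosmetic difference is that the paper phrases the Schur argument through the pair of compositions $\tilde r_0 r_0^\star$ and $r_0^\star\tilde r_0$ with constants $b=d$, while you work directly with $r_0^\star r_0$; these are the same computation.
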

\begin{proof} We systematically apply Theorem~\ref{prop:gentau}. Under our assumptions, we have:  $\mathbf H^2(H_0,\tau)$ is a irreducible representation and $\mathbf H^2(H_0,\tau)= H^2(H_0,\tau) $;    \\ $\mathrm{Cl}(\mathcal U(\mathfrak h_0) ( H^2(G,\tau)[W]))$ is $H_0$-irreducible;   We define\\     $ \tilde r_0:= rest(r_0) :\mathrm{Cl}(\mathcal U(\mathfrak h_0)H^2(G,\tau)[W]) \rightarrow H^2(H_0, \tau)$ is a isomorphism. To follow, we notice the inverse of $\tilde r_0$, is up to a constant, equal to  $r_0^\star$ restricted to $H^2(H_0, \tau)$. This is so,   because functional analysis yields the  equalities  $\mathrm{Cl}(Im(r_0^\star))=ker(r_0)^\perp=\mathrm{Cl}(\mathcal U(\mathfrak h_0)W)$, $Ker(r_0^\star)=Im(r_0)^\perp=H^2(H_0,\tau)^\perp$.   Thus, Schur's lemma  applied to the irreducible  modules $H^2(H_0,\tau), \mathrm{Cl}(\mathcal U(\mathfrak h_0)W)$ implies there exists  non zero constants $b,d$ so that $   (\tilde r_0 r_0^\star)_{\vert_{H^2(H_0,\tau)}}=b I_{H^2(H_0,\tau)}$, $r_0^\star \tilde r_0 =d I_{\mathrm{Cl}(\mathcal U(\mathfrak h_0)W)}$. Whence,  the inverse to $\tilde r_0$ follows. In \ref{sub:valuec}, we show $b=d =d(\pi_\lambda)\dim  W/d(\pi_{\eta_0}^{H_0})=c$.

For $x\in G, f\in H^2(G,\tau)$, the identity $f(x)=\int_G K_\lambda(y,x)f(y) dy$   holds. Thus,
$r_0(f)(p)=f(p)=\int_G K_\lambda (y,p) f(y) dy, \, \text{for}\, p\in H_0, f \in H^2(G,\tau)$, and,  we obtain

\medskip
$K_{r_0}(x,h_0)= K_\lambda (x,h_0), \,\,\, K_{r_0^\star}(h_0,x)=K_{r_0}(x,h_0)^\star =K_\lambda (h_0,x)$.

\smallskip
  Hence, for $g \in H^2(H_0, \tau)$  we have,  \\ \phantom{xxx} $\tilde r_0^{-1}(g)(x)=\frac{1}{c}\int_{H_0} K_{r_0^\star}(h_0,x) g(h_0) dh_0 = \frac{1}{c}\int_{H_0} K_{\lambda}(h_0,x) g(h_0) dh_0$.

Therefore, for   $T_0 \in Hom_L (Z, H^2(H_0,\tau))$,  the kernel $K_T$ of the element $T $ in $Hom_H(H^2(H,\sigma),H^2(G,\tau))$ such that $r_0^D(T)=T_0$,  satisfies for $z\in Z$ $$ D^{-1}([r_0^{-1}(T_0(z)(\cdot))])(\cdot)=K_T(e,\cdot)z \in V_\lambda^G[H^2(H,\sigma)][Z]\subset H^2(G,\tau ).$$ More explicitly, after we recall $K_T(e,h^{-1}x)=K_T(h,x)$,
$$K_T(h,x)z=  (D^{-1}[\int_{H_0}\frac{1}{c} K_\lambda (h_0,\cdot)(T_0(z)(h_0))dh_0 ])(h^{-1}x).$$ \end{proof}
\begin{cor}For any $T$ in   $Hom_H(H^2(H,\sigma),H^2(G,\tau))$ we have
$$K_T(h,x)z=  (D^{-1}[\int_{H_0} \frac{1}{c} K_\lambda (h_0,\cdot)(r_0(D(K_T(e,\cdot)z))(h_0))dh_0 ])(h^{-1}x).$$
\end{cor}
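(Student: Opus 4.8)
The plan is to deduce this from Proposition~\ref{prop:ktfromkto} together with the bijectivity of $r_0^D$ established in Theorem~\ref{prop:Rwelldefgc}. Since we are in the case where $(\tau,W)$ restricted to $L$ is irreducible, Proposition~\ref{prop:gentau} gives $\mathbf H^2(H_0,\tau)=H^2(H_0,\tau)$, so the target of $r_0^D$ is $Hom_L(Z,H^2(H_0,\tau))$. Given an arbitrary $T\in Hom_H(H^2(H,\sigma),H^2(G,\tau))$, I would set $T_0:=r_0^D(T)\in Hom_L(Z,H^2(H_0,\tau))$; by the injectivity of $r_0^D$ (Corollary~\ref{prop:r0dinj}) together with its surjectivity, one has $T=(r_0^D)^{-1}(T_0)$.

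Now apply Proposition~\ref{prop:ktfromkto} to this $T_0$: the kernel of $(r_0^D)^{-1}(T_0)=T$ is
$$K_T(h,x)z=\Bigl(D^{-1}\Bigl[\int_{H_0}\frac{1}{c}\,K_\lambda(h_0,\cdot)\bigl(T_0(z)(h_0)\bigr)\,dh_0\Bigr]\Bigr)(h^{-1}x).$$
It remains only to rewrite $T_0(z)(h_0)$. By the very definition of the map $r_0^D$ in Theorem~\ref{prop:Rwelldefgc}, $r_0^D(T)(z)=r_0\bigl(D(K_T(e,\cdot)z)\bigr)$, hence $T_0(z)(h_0)=r_0\bigl(D(K_T(e,\cdot)z)\bigr)(h_0)$. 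Substituting this into the displayed formula yields the asserted identity.

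The only point requiring a moment's care — the closest thing to an obstacle — is the internal consistency of the two occurrences of $D$ and of $K_T(e,\cdot)$. The function $K_T(e,\cdot)z$ lies in $H^2(G,\tau)[V_\mu^H][V_{\mu+\rho_n^H}^L]$ by \ref{sec:kernelprop}, so $D(K_T(e,\cdot)z)$ is defined, and $r_0$ of it lies in $H^2(H_0,\tau)$ by part i) of Theorem~\ref{prop:Rwelldefgc}; thus $T_0(z)=r_0(D(K_T(e,\cdot)z))$ is indeed an element of $H^2(H_0,\tau)$, to which the integral transform $\tilde r_0^{-1}=\frac{1}{c}\,r_0^\star$ from the proof of Proposition~\ref{prop:ktfromkto} applies, with the same constant $c=d(\pi_\lambda)\dim W/d(\pi_{\eta_0}^{H_0})$. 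One then simply reads off the formula; no new computation is needed, the statement being essentially a rephrasing of Proposition~\ref{prop:ktfromkto} that no longer refers to the auxiliary object $T_0$.
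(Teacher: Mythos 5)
Your proposal is correct and follows exactly the route the paper intends: the corollary is the immediate consequence of Proposition~\ref{prop:ktfromkto} obtained by setting $T_0:=r_0^D(T)$ (so that $T=(r_0^D)^{-1}(T_0)$ by the bijectivity from Theorem~\ref{prop:Rwelldefgc}) and substituting $T_0(z)(h_0)=r_0(D(K_T(e,\cdot)z))(h_0)$ from the definition of $r_0^D$. Your check that $K_T(e,\cdot)z$ lies in the domain of $D$ and that $r_0(D(K_T(e,\cdot)z))\in H^2(H_0,\tau)$ is the right consistency verification, and nothing further is needed.
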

\begin{cor}When $D$ is the identity map, we obtain
\begin{eqnarray*}\lefteqn{K_T(h,x)z  =   \int_{H_0} \frac{1}{c}K_\lambda (h_0,h^{-1}x)(T_0(z)(h_0))dh_0} & & \\ & & \mbox{\phantom{ssssssssssss}} =\int_{H_0} \frac{1}{c}K_\lambda (hh_0,x)K_T(e,h_0)z dh_0.\end{eqnarray*} \end{cor}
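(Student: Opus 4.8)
The plan is to reduce the computation of $(r_0^D)^{-1}$ to the determination of a single Schur-type scalar, namely the one relating the isomorphism $r_0$ restricted to $\mathrm{Cl}(\mathcal U(\mathfrak h_0)W)$ to its Hilbert-space adjoint $r_0^\star$. First I would record, from Proposition~\ref{prop:gentau} $c)$ (which is part $i)$ of Theorem~\ref{prop:Rwelldefgc}) together with the extra hypothesis that $res_L(\tau)$ is irreducible: that $\mathbf H^2(H_0,\tau)=H^2(H_0,\tau)$ is $H_0$-irreducible, that $\mathrm{Cl}(\mathcal U(\mathfrak h_0)W)$ is $H_0$-irreducible, and that $\tilde r_0:=r_0|_{\mathrm{Cl}(\mathcal U(\mathfrak h_0)W)}$ is a topological $H_0$-isomorphism onto $H^2(H_0,\tau)$.

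Next I would identify $\tilde r_0^{-1}$ with a scalar multiple of $r_0^\star$. By Lemma~\ref{lem:injecrh} applied to $\mathfrak h_0$ one has $\ker(r_0)^\perp=\mathrm{Cl}(\mathcal U(\mathfrak h_0)W)$, so $\mathrm{Cl}(\mathrm{Im}\,r_0^\star)=\mathrm{Cl}(\mathcal U(\mathfrak h_0)W)$ and $\ker r_0^\star=(\mathrm{Im}\,r_0)^\perp$, which inside $L^2(H_0\times_\tau W)_{H_0-disc}$ is the orthogonal complement of $H^2(H_0,\tau)$, using that $H^2(H_0,\tau)$ occurs with multiplicity one in $L^2(H_0\times_\tau W)$ (Hotta-Parthasarathy, Atiyah-Schmid, Enright-Wallach). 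Hence $r_0^\star|_{H^2(H_0,\tau)}$ is a nonzero $H_0$-morphism between two irreducible modules, and Schur's lemma forces $r_0^\star|_{H^2(H_0,\tau)}=c\,\tilde r_0^{-1}$ for a nonzero scalar; the value $c=d(\pi_\lambda)\dim W/d(\pi_{\eta_0}^{H_0})$ I would take from the formal-degree bookkeeping carried out in \ref{sub:valuec}, since it affects only the normalization and not the shape of the formula.

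Then I would make the kernels explicit. Since for $f\in H^2(G,\tau)$ and $p\in H_0$ one has $r_0(f)(p)=f(p)=\int_G K_\lambda(y,p)f(y)\,dy$, the kernel of $r_0$ is $K_{r_0}(x,h_0)=K_\lambda(x,h_0)$, and hence $K_{r_0^\star}(h_0,x)=K_{r_0}(x,h_0)^\star=K_\lambda(h_0,x)$ by $K_\lambda(a,b)^\star=K_\lambda(b,a)$. Therefore, for $g\in H^2(H_0,\tau)$,
\[
\tilde r_0^{-1}(g)(x)=\frac{1}{c}\int_{H_0}K_\lambda(h_0,x)\,g(h_0)\,dh_0 .
\]
Now unwinding the definition of $r_0^D$: the identity $r_0^D(T)=T_0$ reads $r_0\bigl(D(K_T(e,\cdot)z)\bigr)=T_0(z)$ for all $z\in Z$; since $D(K_T(e,\cdot)z)$ lies in $(\mathcal U(\mathfrak h_0)W)[V_{\mu+\rho_n^\mu}^L]\subset\mathrm{Cl}(\mathcal U(\mathfrak h_0)W)$, I may apply $\tilde r_0^{-1}$ to obtain $D(K_T(e,\cdot)z)=\tilde r_0^{-1}(T_0(z))$, that is
\[
K_T(e,\cdot)z=D^{-1}\Bigl[\frac{1}{c}\int_{H_0}K_\lambda(h_0,\cdot)\,\bigl(T_0(z)(h_0)\bigr)\,dh_0\Bigr].
\]
Finally, the equivariance $K_T(hh_1,hx)=K_T(h_1,x)$ with $h_1=e$ gives $K_T(h,x)=K_T(e,h^{-1}x)$, which yields the stated formula; the two corollaries follow by specializing $T$ (so that $T_0(z)(h_0)=r_0(D(K_T(e,\cdot)z))(h_0)$) and, in the case $D=\mathrm{Id}$, by combining $T_0(z)(h_0)=K_T(e,h_0)z$ with the left $G$-invariance $K_\lambda(hh_0,x)=K_\lambda(h_0,h^{-1}x)$ of the reproducing kernel.

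The step I expect to be the main obstacle is precisely the identification $\tilde r_0^{-1}=\frac{1}{c}\,r_0^\star|_{H^2(H_0,\tau)}$ with the correct constant: it needs the irreducibility of both source and target (from Proposition~\ref{prop:gentau}), the fact that $r_0^\star$ does not annihilate $H^2(H_0,\tau)$ — which rests on the multiplicity-one statement for the discrete part in $L^2(H_0\times_\tau W)$ and on Lemma~\ref{lem:injecrh} — and the separate formal-degree computation pinning down $c$. Everything else is a matter of unwinding definitions and using the standard left-invariance and adjoint identities for the kernels $K_\lambda$, $K_{r_0}$ and $K_T$.
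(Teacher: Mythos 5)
Your proposal is correct and follows essentially the same route as the paper: it reproves Proposition~\ref{prop:ktfromkto} by identifying $\tilde r_0^{-1}$ with $\frac{1}{c}\,r_0^\star|_{H^2(H_0,\tau)}$ via Lemma~\ref{lem:injecrh}, the multiplicity-one statement and Schur's lemma, computes the kernels $K_{r_0}$ and $K_{r_0^\star}$ explicitly, and then specializes to $D=\mathrm{Id}$ using $T_0(z)(h_0)=K_T(e,h_0)z$ and the invariance $K_\lambda(h_0,h^{-1}x)=K_\lambda(hh_0,x)$. This matches the paper's argument, including its reliance on the separate formal-degree computation in \ref{sub:valuec} for the value of $c$.
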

The   equality in the conclusion of Proposition~\ref{prop:ktfromkto} is equivalent to  \\ \phantom{xxxxxx} $D(K_T(e,\cdot))(y)=\int_{H_0} \frac{1}{c} K_\lambda(h_0,y) D(K_T(e,\cdot))(h_0)dh_0, y\in G$. \\
Hence,     we have derived  a  formula that let us to recover the kernel $K_T$ (resp. $D(K_T(e,\cdot)) (\cdot)$) from $K_T(e,\cdot)$ (resp.  $D(K_T(e,\cdot)) (\cdot)$) restricted to $H_0$!
\begin{rmk}

  We notice,  \begin{equation} \label{eq:q}  r_0^\star r_0(f)(y) = \int_{H_0}   K_\lambda(h_0,y) f(h_0)dh_0  , \,\, f\in H^2(G,\tau),\,y\in G.  \end{equation}

  Since we are assuming $\tau_{\vert_L}$ is irreducible, we have  $\mathrm{Cl}(\mathcal U(\mathfrak h_0)W)$ is irreducible, hence, Lemma~\ref{lem:injecrh} let us  to obtain  that a scalar multiple of $r_0^\star r_0$ is the orthogonal projector onto the irreducible factor $\mathrm{Cl}(\mathcal U(\mathfrak h_0)W)$.

\noindent
Whence, the orthogonal projector onto $\mathrm{Cl}(\mathcal U(\mathfrak h_0)W)$
is given by    $ \frac{d(\pi_{\eta_0}^{H_0})}{d(\pi_\lambda) \dim  W}\,\,  r_0^\star r_0$.

 Thus, the kernel $K_{\lambda,\eta_0}$ of the orthogonal projector onto $\mathrm{Cl}(\mathcal U(\mathfrak h_0)W)$ is

\phantom{xxxxxx} $  K_{\lambda, \eta_0}(x,y):=   \frac{d(\pi_{\eta_0}^{H_0})}{d(\pi_\lambda) \dim  W} \int_{H_0}K_{\lambda}(p,y)K_{\lambda}(x,p) dp$.

Doing $H:=H_0$ we obtain a similar result for the kernel of the orthogonal projector  onto $\mathrm{Cl}(\mathcal U(\mathfrak h)W)$.
\end{rmk}

\smallskip

The equality $(r_0r_0^\star)_{\vert_{H^2(H_0,\tau)}} =cI_{H^2(H_0,\tau)}$ yields the first claim in:
\begin{prop}Assume $res_L(\tau) $ is irreducible. Then, \\
 a) for every  $g\in H^2(H_0, \tau_{\vert_L})$ (resp. $g\in H^2(H,\tau_{\vert_L})$), the function $r_0^\star(g)$ (resp. $r^\star(g)$),   is an extension of a scalar multiple of $g$.\\ b) The kernel  $K_\lambda^G$ is a extension of a scalar multiple of $K_{\tau_{\vert_L}}^H$.
 \end{prop}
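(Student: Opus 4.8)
The plan is to deduce both parts from Schur's Lemma applied to the restriction maps $r\colon H^2(G,\tau)\to L^2(H\times_\tau W)$ and $r_0\colon H^2(G,\tau)\to L^2(H_0\times_\tau W)$. First I would assemble the structural input. Since $res_L(\tau)$ is irreducible, Proposition~\ref{prop:gentau}(c) together with \ref{tauirred} shows that $\mathrm{Cl}(\mathcal U(\mathfrak h_0)W)$ is $H_0$-irreducible and that $r_0$ carries it isomorphically onto $H^2(H_0,\tau_{\vert_L})$; the same argument — using that \ref{tauirred}(b) forces $\mathcal U(\mathfrak h)W$ to contain a single discrete summand for $H$ — shows $\mathrm{Cl}(\mathcal U(\mathfrak h)W)$ is $H$-irreducible and is carried by $r$ isomorphically onto $H^2(H,\tau_{\vert_L})$. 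Because $\mathrm{Ker}(r)=\mathrm{Cl}(\mathcal U(\mathfrak h)W)^{\perp}$ by Lemma~\ref{lem:injecrh} and a discrete series has multiplicity one in $L^2(H\times_\tau W)$, the image of $r$ is exactly $H^2(H,\tau_{\vert_L})$; in particular $rf\in H^2(H,\tau_{\vert_L})$ for every $f\in H^2(G,\tau)$, and similarly $r_0f\in H^2(H_0,\tau_{\vert_L})$. Hence $r^\star$ and $r_0^\star$ are the equivariant inverses up to scalar, and Schur's Lemma yields positive constants $c',c$ with $r r^\star=c'\,I$ on $H^2(H,\tau_{\vert_L})$ and $r_0 r_0^\star=c\,I$ on $H^2(H_0,\tau_{\vert_L})$, the value of $c$ being the one computed in \ref{sub:valuec}.

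Part (a) would then be immediate. For $g\in H^2(H_0,\tau_{\vert_L})$, the vector $r_0^\star(g)$ lies in $\mathrm{Cl}(\mathcal U(\mathfrak h_0)W)\subset H^2(G,\tau)$, hence is a smooth $W$-valued function on $G$; since $r_0$ is nothing but restriction of functions to $H_0$, the identity $r_0 r_0^\star=c\,I$ reads $r_0^\star(g)\big|_{H_0}=c\,g$, i.e. $r_0^\star(g)$ extends the scalar multiple $c\,g$ of $g$. The statement for $r^\star$ and $g\in H^2(H,\tau_{\vert_L})$ follows in the same way from $r r^\star=c'\,I$.

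For part (b) the key step I would carry out is the identity
\[
K_\lambda^G(\cdot,h')^\star w \;=\; r^\star\!\big(K_{\tau_{\vert_L}}^H(\cdot,h')^\star w\big),\qquad h'\in H,\ w\in W .
\]
To prove it, fix $f\in H^2(G,\tau)$: the reproducing property of $K_\lambda^G$ gives $(f,\,K_\lambda^G(\cdot,h')^\star w)_{H^2(G,\tau)}=(f(h'),w)_W$, while $rf\in H^2(H,\tau_{\vert_L})$ with $rf(h')=f(h')$, so the reproducing property of $K_{\tau_{\vert_L}}^H$ and adjunction give $(f(h'),w)_W=(rf,\,K_{\tau_{\vert_L}}^H(\cdot,h')^\star w)_{L^2(H\times_\tau W)}=(f,\,r^\star(K_{\tau_{\vert_L}}^H(\cdot,h')^\star w))_{H^2(G,\tau)}$; as $f$ is arbitrary the two elements of $H^2(G,\tau)$ coincide. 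Now $K_{\tau_{\vert_L}}^H(\cdot,h')^\star w$ lies in $H^2(H,\tau_{\vert_L})$, so by part (a) its image under $r^\star$ restricts on $H$ to $c'\,K_{\tau_{\vert_L}}^H(\cdot,h')^\star w$; combining with the displayed identity gives $K_\lambda^G(h,h')^\star w=c'\,K_{\tau_{\vert_L}}^H(h,h')^\star w$ for all $h,h'\in H$ and $w\in W$, that is, the restriction of $K_\lambda^G$ to $H\times H$ equals $c'$ times $K_{\tau_{\vert_L}}^H$. The corresponding statement with $H_0$ in place of $H$ is obtained verbatim. I expect the only genuinely delicate point to be the claim that $r$ (and $r_0$) maps $H^2(G,\tau)$ into the single discrete series $H^2(H,\tau_{\vert_L})$ rather than merely into $L^2(H\times_\tau W)_{H-disc}$: this is exactly where one combines Lemma~\ref{lem:injecrh}, the irreducibility of $\mathrm{Cl}(\mathcal U(\mathfrak h)W)$ forced by irreducibility of $res_L(\tau)$, and the multiplicity-one realization of a discrete series; granting this, the rest is a routine manipulation of reproducing kernels and adjoints.
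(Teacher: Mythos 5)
Your proof is correct and follows essentially the same route as the paper: both parts rest on identifying $r(\mathrm{Cl}(\mathcal U(\mathfrak h)W))$ with the single discrete summand $H^2(H,\tau_{\vert_L})$ of $L^2(H\times_\tau W)$ and on the Schur scalar $rr^\star=c'I$ (resp. $r_0r_0^\star=cI$) on that summand. The only (harmless) difference is that you derive the identity $r^\star\big(K^H_{\tau_{\vert_L}}(\cdot,h')^\star w\big)=K_\lambda(\cdot,h')^\star w$ directly from the two reproducing properties and adjunction, whereas the paper obtains it from the one-dimensionality of $Hom_H(H^2(H,\tau),H^2(G,\tau))$ given by the duality theorem together with the kernel formula $T(K_\mu^H(\cdot,e)^\star z)(x)=K_T(e,x)z$ of \cite{OV1}; your derivation is, if anything, more self-contained.
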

When we restrict holomorphic Discrete Series,
this fact naturally happens, see \cite{Na}, \cite[ Example 10.1]{OV2}  and references therein.
\begin{proof} Let $r:H^2(G,\tau)\rightarrow L^2(H\times_\tau W)$ the restriction map. The duality $H,H_0$, and Theorem~\ref{prop:Rwelldefgc} applied to $H:=H_0$ implies $H^2(H,\tau)=r(\mathrm{Cl}(\mathcal U(\mathfrak h)W))$, as well as  that there exists,   up to a constant, a unique $T\in Hom_H (H^2(H,\tau),H^2(G,\tau))\equiv Hom_L(W,H^2(H_0,\tau))\equiv \mathbb C$. It follows  from the proof of  Proposition~\ref{prop:ktfromkto},  that, up to a constant,  $T=r^\star$ restricted to $H^2(H,\tau)$. After we apply the equality $ T(K_\mu^H(\cdot, e)^\star z)(x)= K_T(e,x)z$, (see \cite{OV1}),  we obtain,     \\ \phantom{xxxxxxxxx} $r^\star(K_\mu^H(\cdot,e)^\star z)(y)=K_\lambda(y,e)^\star z$.\\  Also, Schur's lemma implies $rr^\star $ restricted to $H^2(H,\tau)$ is a constant times the identity map. Thus, for $h\in H$,  we have  $rr^\star   (K_\mu^H(\cdot,e)^\star w)(h) =qK_\mu^H(h,e)^\star w$. For the value of $q$ see \ref{sub:valuec}. Putting together, we obtain,\\ \phantom{xxcccxxxx}  $K_\lambda(h,e)^\star z=r(K_\lambda(\cdot,e)^\star z)(h)=qK_\mu^H(h,e)^\star z$.

Whence, for $h,h_1 \in H$ we have
\begin{center}
$K_\lambda(h_1,h)^\star z=K_\lambda(h^{-1}h_1,e)^\star z=qK_\mu^H(h^{-1}h_1,e)^\star z=qK_\mu^H( h_1,h)^\star z$ \end{center} as we have claimed. \end{proof}

By the same token, after we set $H:=H_0$ we obtain:

\smallskip

  {\it For $res_L(\tau) $ irreducible, $(\sigma,Z)=(res_L(\tau),W)$, and $V_{\eta_0}^{H_0}= H^2(H_0,\sigma)$,   the kernel  $K_\lambda$ extends a scalar multiple of   $K_{\eta_0}^{H_0}$}. Actually, $r_0(K_\lambda(\cdot,e)^\star w )=cK_{\eta_0}^{H_0}(\cdot,e)^\star w$.

\medskip

\begin{rmk} We would like to point out that the equality \\ \phantom{xxxxxxxxxxxxx} $r^\star(K_\mu^H(\cdot,e)^\star(z))(y)= K_\lambda(y,e)^\star z$\\ implies $res_H(\pi_\lambda)$ is $H$-algebraically discretely decomposable. Indeed, we apply  a Theorem shown by Kobayashi \cite[Lemma 1.5]{Kob}, the Theorem says that when $(V_\lambda^G)_{K-fin}$ contains an irreducible $(\mathfrak h,L)$ irreducible submodule, then $V_\lambda^G$ is discretely decomposable. We know $K_\lambda(y,e)^\star z$ is a $K$-finite vector,  the equality implies $K_\lambda(y,e)^\star z$ is $\mathfrak z(\mathcal U(\mathfrak h))$-finite.  Hence, owing to Harish-Chandra \cite[Corollary 3.4.7 and Theorem 4.2.1]{Wa1},  $H^2(G,\tau)_{K-fin}$ contains a nontrivial irreducible $(\mathfrak h,L)$-module and the fact shown by  Kobayashi applies.
\end{rmk}

\subsubsection{Value of $b=d=c$ when $res_L(\tau)$ is irreducible}\label{sub:valuec} We show $b=d=d(\pi_\lambda)\dim  W/d(\pi_{\eta_0}^{H_0})=c $.  In fact, the constant $b,d$ satisfies  $(r_0^\star r_0)_{\mathcal U(\mathfrak h_0)W} =d I_{\mathcal U(\mathfrak h_0)W}$, $  (r_0 r_0^\star)_{\vert_{H^2(H_0,\tau)}}=b I_{H^2(H_0,\tau)}$. Now, it readily follows $b=d$.  To evaluate $r_0^\star r_0$ at $K_\lambda (\cdot,e)^\star w  $,    for $h_1 \in H_0$ we compute, for $h_1 \in H_0$, \\ $bK_\lambda (h_1,e)^\star w =r_0^\star r_0(K_\lambda (\cdot,e)^\star w) (h_1) $
$=  \int_{H_0} K_\lambda (h_0,h_1) K_\lambda (h_0,e)^\star   dh_0 w$ \\ \phantom{nnnnnnnnnnnnnnnnnnnnnnnnnnn} $=d(\pi_\lambda)^2 \int_{H_0}  \Phi(h_1^{-1}h_0 ) \Phi(h_0)^\star  dh_0 w$. \\ Here, $\Phi$ is the spherical function attached to the lowest $K$-type of $\pi_\lambda$. Since, we are assuming $res_L(\tau)$ is a  irreducible representation, we have $\mathcal U(\mathfrak h_0)W$ is a irreducible $(\mathfrak h_0, L)$-module and it is equivalent to the underlying Harish-Chandra module for $H^2(H_0,res_L(\tau))$. Thus, the restriction of  $\Phi$ to $H_0$ is the spherical function attached to the  lowest $L$-type of the irreducible square integrable representation $\mathrm{Cl}(\mathcal U(\mathfrak h_0)W)\equiv H^2(H_0,res_L(\tau)) $.   We fix a orthonormal basis $\{w_i\}$ for $\mathcal U(\mathfrak h_0)W[W]$.    We recall,

$\Phi(x)w=P_W \pi(x)P_W w=\sum_{1\leq i \leq \dim  W} (\pi(x)w,w_i)_{L^2} w_i$,

$\Phi(x^{-1})=\Phi(x)^\star$.

For $h_1 \in H_0$, we compute, to justify steps we appeal to  the invariance of Haar measure and to the orthogonality relations for matrix coefficients of irreducible square integrable representations and we recall $d(\pi_{\eta_0}^{H_0})$ denotes the formal degree for $H^2(H_0,res_L(\tau)) $.
\begin{equation*}
\begin{split}
 \int_{H_0} \Phi (h_1^{-1}h)\Phi(h)^\star w dh & = \sum_{i,j}\int_{H_0} (\pi(h_1^{-1}h)w_j ,w_i)_{L^2} (\pi(h^{-1})w,w_j)_{L^2} w_i dh \\ & = \sum_{i,j}\int_{H_0} (\pi(h)w_j ,\pi(h_1) w_i)_{L^2} \overline{(\pi(h)w_j,w)_{L^2}} w_i dh \\ & =1/d(\pi_{\eta_0}^{H_0}) \sum_{i,j} (w_j,w_j)_{L^2} \overline { (\pi(h_1)w_i,w)_{L^2}} w_i \\ & =\dim  W /d(\pi_{\eta_0}^{H_0})\sum_i (\pi(h_1^{-1}) w,w_i)_{L^2}w_i \\ & = \dim  W /d(\pi_{\eta_0}^{H_0}) \Phi(h_1)^\star w  . \end{split}  \end{equation*}

Thus, \begin{equation*}
\begin{split} r_0^\star r_0 (K_\lambda (\cdot, e)^\star w) (h_1) & = d(\pi_\lambda)^2 \int_{H_0} \Phi (h_1^{-1}h)\Phi(h)^\star w dh  \\ & = \frac{d(\pi_\lambda)^2 \dim  W }{d(\pi_{\eta_0}^{H_0})  d(\pi_\lambda)} K_\lambda (h_1,e)^\star w. \end{split}  \end{equation*}
The functions $ K_\lambda (\cdot ,e)^\star w, r_0^\star r_0 (K_\lambda (\cdot, e)^\star w) (\cdot)$ belong to $\mathrm{Cl}(\mathcal U(\mathfrak h_0)W)$, the injectivity of $r_0$ on $\mathrm{Cl}(\mathcal U(\mathfrak h_0)W)$, forces, for every $x \in G$

\phantom{xxxxxxx} $r_0^\star r_0 (K_\lambda (\cdot, e)^\star w) (x)= d(\pi_\lambda) \dim  W /d(\pi_{\eta_0}^{H_0})  K_\lambda (x,e)^\star w$.

 Hence, we have computed $b=d=c$.

\subsubsection{Analysis of $r_0^D$ for arbitrary $(\tau, W), (\sigma, Z)$ }
We recall the decomposition $W=\sum_{\nu_2^\prime \in Spec_{L\cap K_2}(\pi_{\Lambda_2}^{K_2}) }  W[ \pi_{\Lambda_1}^{ K_1}\boxtimes \pi_{\nu_2^\prime}^{L\cap K_2}]$.

A consequence of Proposition~\ref{prop:gentau} is that $r_0^\star$ maps $\mathbf H^2(H_0, W[\pi_{\Lambda_1}^{ K_1}\boxtimes \pi_{\nu_2}^{L\cap K_2}])$ into $\mathrm{Cl}(\mathcal U(\mathfrak h_0)W[\pi_{\Lambda_1}^{ K_1}\boxtimes \pi_{\nu_2}^{L\cap K_2}])$. In consequence, $r_0  r_0^\star$ restricted to $\mathbf H^2(H_0, W[\pi_{\Lambda_1}^{ K_1}\boxtimes \pi_{\nu_2}^{L\cap K_2}])$ is a bijective $H_0$-endomorphism $C_j$.  Hence, the inverse map of $r_0$ restricted to $\mathrm{Cl}(\mathcal U(\mathfrak h_0)W[\pi_{\Lambda_1}^{ K_1}\boxtimes \pi_{\nu_2}^{L\cap K_2}]) $ is $r_0^\star C_j^{-1}$. Since, $H^2(H_0, \pi_{\Lambda_1}^{ K_1}\boxtimes \pi_{\nu_2}^{L\cap K_2})$ has a unique lowest $L$-type, we conclude $C_j$ is determined  by an element of $Hom_L( \pi_{\Lambda_1}^{ K_1}\boxtimes \pi_{\nu_2}^{L\cap K_2},H^2(H_0,\pi_{\Lambda_1}^{ K_1}\boxtimes \pi_{\nu_2}^{L\cap K_2})[\pi_{\Lambda_1}^{ K_1}\boxtimes \pi_{\nu_2}^{L\cap K_2}])$. Since for $D \in \mathcal U(\mathfrak h_0), w \in W$ we have  \ $C_j(L_D w)=L_D C_j(w)$, we obtain $C_j$ is a zero order differential operator on the underlying Harish-Chandra module of $H^2(H_0,\pi_{\Lambda_1}^{ K_1}\boxtimes \pi_{\nu_2}^{L\cap K_2})$.   Summing up, we have that the inverse to $r_0 : \mathrm{Cl}(\mathcal U(\mathfrak h_0)W) \rightarrow \mathbf H^2(H_0, \tau)$ is the function $ r_0^\star (\oplus_j C_j^{-1})$. \\ For $T \in Hom_H(H^2(H,\sigma), H^2(G,\tau))$ and  $T_0 \in Hom_L(Z, \mathbf H^2(H_0,\tau))$ so that $r_0^D(T)=T_0$ we obtain the equalities\begin{equation*}K_T(e,x)z=  (D^{-1}[\int_{H_0} K_\lambda (h_0,\cdot)((\oplus_j C_j^{-1})T_0(z))(h_0)dh_0 ])(x). \end{equation*}
  \begin{equation*}  K_T(h,x)z      =  (D^{-1}[\int_{H_0} K_\lambda (h_0,\cdot)      (\oplus_j C_j^{-1})(r_0 (D (K_T(e,\cdot)z))(\cdot))(h_0)dh_0 ])(h^{-1}x).
 \end{equation*}
When $D$ is the identity the formula simplifies as the one in the second Corollary to Proposition~\ref{prop:ktfromkto}.

 \subsubsection{Eigenvalues of $r_0^\star r_0$}\label{sub:eigenvalq} For general case, we recall $r_0^\star r_0$  intertwines the action of $H_0$. Moreover, Proposition~\ref{prop:gentau}  and its Corollary gives that for each   $L$-isotypic component    $Z_1 \subseteq W$ of $res_L(\tau)$, we have $\mathcal U(\mathfrak h_0)W[Z_1]=Z_1$. Thus,  each isotypic component of $res_L((\mathcal U(\mathfrak h_0)W)[W])$ is invariant by  $r_0^\star r_0$, in consequence, $r_0^\star r_0$ leaves invariant the subspace
 $"W"=H^2(G,\tau)[W]=\{K_\lambda(\cdot,e)^\star w, w \in W\}$. Since, $Ker(r_0)=(\mathrm{Cl}(\mathcal U(\mathfrak h_0)W))^\perp$, we have  $r_0^\star r_0$ is determined by the values it takes on $"W"$.
 Now, we assume $res_L(\tau)$ is a multiplicity free representation, we write $Z_1^\perp=Z_2\oplus \dots \oplus Z_q$, where $Z_j$ are $L$-invariant and $L$-irreducible. Thus, Proposition~\ref{prop:gentau} implies $\mathrm{Cl}(\mathcal U(\mathfrak h_0)W)=   \mathrm{Cl}(\mathcal U(\mathfrak h_0)Z_1) \oplus \cdots \oplus  \mathrm{Cl}(\mathcal U(\mathfrak h_0)Z_q)$. This a orthogonal decomposition, each summand is irreducible and no irreducible factor is equivalent to   other. For $1\leq i\leq q$, let $\eta_i$ denote the Harish-Chandra parameter for $ \mathrm{Cl}(\mathcal U(\mathfrak h_0)Z_i)$.
  \begin{prop} \label{prop:eigenvalq}  When $res_L(\tau)$ is a multiplicity free representation, the linear operator $r_0^\star r_0$ on  $\mathrm{Cl}(\mathcal U(\mathfrak h_0)Z_i )$ is equal to $\frac{d(\pi_\lambda)\dim  Z_i}{d(\pi_{\eta_i}^{H_0}) }$ times  the identity map.
 \end{prop}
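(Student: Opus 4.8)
The plan is to reduce the multiplicity-free case to the irreducible case already treated, by exploiting the orthogonal decomposition $\mathrm{Cl}(\mathcal U(\mathfrak h_0)W)=\bigoplus_{i=1}^q \mathrm{Cl}(\mathcal U(\mathfrak h_0)Z_i)$ coming from Proposition~\ref{prop:gentau}. First I would observe that each $\mathrm{Cl}(\mathcal U(\mathfrak h_0)Z_i)$ is $H_0$-irreducible and $H_0$-equivalent to $H^2(H_0,\sigma_i)$, where $\sigma_i$ is the irreducible $L$-representation on $Z_i$; moreover these factors are pairwise inequivalent since their lowest $L$-types $Z_i$ are pairwise inequivalent (here the multiplicity-free hypothesis is essential). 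Since $r_0$ is injective on $\mathrm{Cl}(\mathcal U(\mathfrak h_0)W)$ (Corollary to Lemma~\ref{lem:injecrh}), $r_0$ maps each $\mathrm{Cl}(\mathcal U(\mathfrak h_0)Z_i)$ isomorphically onto the corresponding summand $H^2(H_0,\sigma_i)$ of $\mathbf H^2(H_0,\tau)$, and $r_0^\star$ maps it back; hence $r_0^\star r_0$ preserves each $\mathrm{Cl}(\mathcal U(\mathfrak h_0)Z_i)$.

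Next I would invoke Schur's lemma on the irreducible $(\mathfrak h_0,L)$-module $\mathrm{Cl}(\mathcal U(\mathfrak h_0)Z_i)$: since $r_0^\star r_0$ is a self-adjoint positive $H_0$-endomorphism of this irreducible space, it is a positive scalar $c_i$ times the identity. It then remains to evaluate the scalar $c_i$. This is exactly the computation already carried out in \ref{sub:valuec} applied with $W$ replaced by $Z_i$ and $H$ replaced by $H_0$: one evaluates $r_0^\star r_0$ on the reproducing-kernel vector $K_\lambda(\cdot,e)^\star z$ for $z\in Z_i$, writes $r_0^\star r_0(K_\lambda(\cdot,e)^\star z)(h_1)=d(\pi_\lambda)^2\int_{H_0}\Phi(h_1^{-1}h)\Phi(h)^\star z\,dh$ with $\Phi$ the spherical function of the lowest $K$-type, restricts $\Phi$ to $H_0$ (which on the $Z_i$-isotypic part is the spherical function of the lowest $L$-type of the irreducible square integrable representation $\mathrm{Cl}(\mathcal U(\mathfrak h_0)Z_i)\equiv H^2(H_0,\sigma_i)$), and applies the orthogonality relations for matrix coefficients together with the formal degree $d(\pi_{\eta_i}^{H_0})$ of that representation. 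A short manipulation with an orthonormal basis of the lowest-$L$-type subspace of $\mathrm{Cl}(\mathcal U(\mathfrak h_0)Z_i)$ gives $\int_{H_0}\Phi(h_1^{-1}h)\Phi(h)^\star z\,dh=(\dim Z_i/d(\pi_{\eta_i}^{H_0}))\Phi(h_1)^\star z$, whence $c_i=d(\pi_\lambda)\dim Z_i/d(\pi_{\eta_i}^{H_0})$, and injectivity of $r_0$ on $\mathrm{Cl}(\mathcal U(\mathfrak h_0)Z_i)$ promotes the identity from $H_0$ to all of $G$.

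The main obstacle is a bookkeeping point rather than a conceptual one: I must be sure that the $Z_i$-isotypic component of $W$ inside $H^2(G,\tau)$ is genuinely $r_0^\star r_0$-invariant and that the restriction of the spherical function $\Phi$ to $H_0$ decomposes block-diagonally along the isotypic components $\mathrm{Cl}(\mathcal U(\mathfrak h_0)Z_i)$ with no cross terms. This is where Proposition~\ref{prop:gentau} and its Corollary are used decisively: they assert $\mathcal U(\mathfrak h_0)W[Z_i]=Z_i$ and that $\mathrm{Cl}(\mathcal U(\mathfrak h_0)W)=\bigoplus_i\mathrm{Cl}(\mathcal U(\mathfrak h_0)Z_i)$ is an orthogonal direct sum of pairwise inequivalent irreducibles, so any $H_0$-endomorphism (in particular $r_0^\star r_0$) is automatically diagonal with respect to this decomposition, and the multiplicity-free hypothesis prevents the off-diagonal Schur maps between distinct factors that would otherwise appear. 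Once this is in place the proof is a direct transcription of \ref{sub:valuec} factor by factor.
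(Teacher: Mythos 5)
Your proposal is correct and follows essentially the same route as the paper: the multiplicity-free hypothesis plus Proposition~\ref{prop:gentau} give the orthogonal decomposition into pairwise inequivalent $H_0$-irreducibles, Schur's lemma makes $r_0^\star r_0$ a scalar $c_i$ on each factor, and the scalar is evaluated by the spherical-function and orthogonality-relations computation of \ref{sub:valuec}, with the vanishing of cross terms between distinct isotypic blocks justified exactly as you indicate. The only cosmetic difference is that the paper carries out the matrix-coefficient computation explicitly with a basis adapted to the blocks rather than citing \ref{sub:valuec} factor by factor, but the content is identical.
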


\begin{proof} For the subspace $\mathrm{Cl}(\mathcal U(\mathfrak h_0)W)[W]$,  we choose a $L^2(G)$-or\-tho\-nor\-mal basis $\{w_j\}_{1\leq j \leq \dim  W}$   equal to the union of respective $L^2(G)$-or\-tho\-nor\-mal basis for $\mathrm{Cl}(\mathcal U(\mathfrak h_0)Z_i)[Z_i]$. Next, we compute and freely make use of the notation  in \ref{sub:valuec}. Owing to our multiplicity free hypothesis, we have that $r_0^\star r_0$ restricted to $ \mathrm{Cl}(\mathcal U(\mathfrak h_0)Z_i)$ is equal to a constant $d_i$ times the identity map. Hence, on $ w\in   \mathrm{Cl}(\mathcal U(\mathfrak h_0)Z_i)[Z_i]$ we have $d_i w=d(\pi_\lambda)^2 \int_{H_0} \Phi(h_0) \Phi(h_0)^\star w dh_0$.

Now, $\Phi(h_0)=(a_{i j})=((\pi_\lambda(h_0)w_j, w_i)_{L^2(G)})$, Hence, the  $p q$-coefficient  of the product $\Phi(h_0)\Phi(h_0)^\star $ is equal to

 $\sum_{1\leq j \leq \dim  W} (\pi_\lambda(h_0)w_j, w_p)_{L^2(G)}\overline{(\pi_\lambda(h_0)w_j, w_q)}_{L^2(G)}$.

Let $I_i$ denote the set of indexes $j$ so that $w_j \in Z_i$. Thus,  $\{1,\dots,\dim  W\}$ is equal to the disjoint union $\cup_{1\leq i \leq q }  I_i$. A consequence of Proposition~\ref{prop:gentau} is the $L^2(G)$-orthogonality of the subspaces $\mathrm{Cl}(\mathcal U(\mathfrak h_0)Z_j)$,  hence,  for $t \in I_a, q \in I_d$ and $a\not= d$  we have  $(\pi_\lambda(h_0)w_q, w_t)_{L^2(G)} =0$. Therefore,   the previous observation and the disjointness of the sets $I_r$, let us obtain  that  for  $i\not= d, p  \in I_i, q \in I_d $ each summand in
\begin{center}
 $\sum_{1\leq j \leq \dim  W} \int_{H_0} (\pi_\lambda(h_0)w_j, w_p)_{L^2(G)}\overline{(\pi_\lambda(h_0)w_j, w_q)}_{L^2(G)}
dh_0  $ \end{center}
is equal to zero.

   For $p,q \in I_i$, we apply the previous computation and the orthogonality relations   to the irreducible representation  $\mathrm{Cl}(\mathcal U(\mathfrak h_0)Z_i)$.   We obtain

$\sum_{1\leq j \leq \dim  W} \int_{H_0} (\pi_\lambda(h_0)w_j, w_p)_{L^2(G)}\overline{(\pi_\lambda(h_0)w_j, w_q)}_{L^2(G)}
dh_0$

$ =  \sum_{j \in I_i} \int_{H_0} (\pi_\lambda(h_0)w_j, w_p)_{L^2(G)}\overline{(\pi_\lambda(h_0)w_j, w_q)}_{L^2(G)}
dh_0$

$ =\sum_{j\in I_i} \frac{ 1}{d(\pi_{\eta_i}^{H_0})} (w_j,w_j)_{L^2(G)} (w_q,w_p)_{L^2(G)}= \frac{ \dim  Z_i }{d(\pi_{\eta_i}^{H_0})} \delta_{pq}$.

Thus, we have shown Proposition~\ref{prop:eigenvalq}. \end{proof}
 \begin{rmk} Even, when $res_L(\tau)$ is not multiplicity free,  the conclusion in Proposition~\ref{prop:eigenvalq} holds. In fact,  let us denote the $L$-isotypic component of $res_L(\tau)$ again by $Z_i$. Now,   the proof goes as the one for Proposition~\ref{prop:eigenvalq} till we need to compute

  $ =  \sum_{j \in I_i} \int_{H_0} (\pi_\lambda(h_0)w_j, w_p)_{L^2(G)}\overline{(\pi_\lambda(h_0)w_j, w_q)}_{L^2(G)}
dh_0$.

For this, we decompose $"Z_i"=\sum_s Z_{i, s}$ as a $L^2(G)$-orthogonal sum of irreducible $L$-modules and we choose the orthonormal basis for $"Z_i"$ as a union of orthonormal basis for each $Z_{i,s}$. Then, we have the $L^2(G)$-orthogonal decomposition  $\mathrm{Cl}(\mathcal U(\mathfrak h_0)Z_i)=\sum_s \mathrm{Cl}(\mathcal U(\mathfrak h_0)Z_{i,s})$.  Then,  the proof follows as in the case $res_L(\tau)$ is multiplicity free.
 \end{rmk}

\section{Examples} We present three type of examples. The first is: {\it Multiplicity free representations.} A simple consequence of the duality theorem is that it readily follows  examples of  symmetric pair $(G,H)$  and square integrable representation $\pi_\lambda^G$ so that $res_H(\pi_\lambda)$ is $H$-admissible and the multiplicity of each  irreducible factor is one. This is equivalent to determinate when the representation $res_L(\mathbf H^2(H_0,\tau))$ is multiplicity free.
The second is: {\it  Explicit examples}. Here, we compute the Harish-Chandra parameters of the irreducible factors for some $res_H(H^2(G,\tau))$. The third is: {\it Existence of representations so that its lowest $K$-types restricted to $L$  is a irreducible representation.}

 In order to present the examples we need information on certain families of representations.

\subsection{Multiplicity free representations} In this paragraph  we generalize  work of T.  Kobayashi and his coworkers in  the setting of Hermitian symmetric spaces and holomorphic Discrete Series.

\smallskip

Before  we present the examples, we would like to   comment.

\smallskip

a) Assume a Discrete Series $\pi_\lambda$ has an admissible restriction to a subgroup $H$. Then, any Discrete Series $\pi_{\lambda^\prime}$ for $\lambda^\prime$ dominant with respect to $\Psi_\lambda$ is $H$-admissible \cite{Kob1}.

\smallskip

b) If $res_H(\pi_\lambda)$ is $H$-admissible and a multiplicity free representation. Then the restriction to $L$ of the lowest $K$-type for $\pi_\lambda$  is multiplicity free.  This follows from the duality theorem.

\smallskip

c) In the next paragraphs we will list families $\mathcal F$ of Harish-Chandra parameters   of Discrete Series   for $G$ so that each representation in the family has  a multiplicity free restriction to $H$. We find that it may happen that  $\mathcal F$ is the whole set of Harish-Chandra parameters on a Weyl chamber or $\mathcal F$ is a proper subset of a Weyl Chamber. Information on $\mathcal F$ for holomorphic reprentations  is  in  \cite{KO}, \cite{KO2}.

\smallskip

d) Every irreducible $(\mathfrak g, K)$-module for either $\mathfrak g\equiv \mathfrak{su}(n,1)$ or $\mathfrak g\equiv \mathfrak{so}(n,1)$, restricted to $K$, is a multiplicity free representation.

\subsubsection{Holomorphic representations}  For $G$ so that  $G/K$ is a Hermitian symmetric space, it has been shown by Harish-Chandra that  $G$ admits Discrete Series representations with one dimensional lowest $K$-type. For this paragraph we further  assume that  the smooth imbedding $H/L \rightarrow G/K$ is holomorphic, equivalently the center of $K$ is contained in $L$,  and $\pi_\lambda$ is a holomorphic representation. Under this hypothesis,  it was shown  by Kobayashi \cite{Kob3}  that a holomorphic Discrete Series for $G$ has a multiplicity free restriction to the subgroup $H$ whenever   it is a scalar holomorphic Discrete Series. Moreover, in \cite[Theorem  8.8]{ Kob3}  computes the Harish-Chandra parameter of each irreducible factor.   Also, from the work of Kobayashi and Nakahama we find a description of the restriction to $H$ of a arbitrary holomorphic Discrete Series representations. As a consequence, we find restrictions which are not multiplicity free.

    In \cite{KO2} we find a complete list of the pairs $(\mathfrak g, \mathfrak h)$ so that $H/L \rightarrow G/K$ is a holomorphic embedding. From the  list   in   \cite{Kob3}, it can be constructed the list bellow.

\smallskip

 Also, Theorem~\ref{prop:Rwelldefgc} let us verify that the following pairs $(\mathfrak g,\mathfrak h)$ are so that $res_H(\pi_\lambda)$ is multiplicity free for any  holomorphic $\pi_\lambda$. For this, we  list the associated $\mathfrak h_0$.

\noindent
 $(\mathfrak{su}(m,n), \mathfrak{s}(\mathfrak{u}(m-1,n)+ \mathfrak u(1)))  $,   $\mathfrak h_0=  \mathfrak{su}(1,n)+\mathfrak{su}(m-1)+ \mathfrak u(1)   $.

\noindent
$(\mathfrak{su}(m,n),  \mathfrak{s}( \mathfrak{u}(m,n-1)+ \mathfrak u(1)))  $,   $\mathfrak h_0=  \mathfrak{su}(n-1)+\mathfrak{su}(m,1)+ \mathfrak u(1)   $.

The list is correct, owing to any Discrete Series for $SU(n,1)$ restricted to $K$ is a multiplicity free representation.

\

\subsubsection{Quaternionic real forms,  quaternionic representations} In \cite{GW}, the authors considered and classified quaternionic real forms, and also,   they made a careful study of  quaternionic representations. To follow we bring out the essential facts for us. From \cite{GW} we read that the list of Lie algebra of quaternionic groups is: $\mathfrak{su}(2,n)$, $\mathfrak{so}(4,n)$, $\mathfrak{sp}(1,n)$,   $\mathfrak e_{6(2)}$, $\mathfrak e_{7(-5)}$, $\mathfrak e_{8(-24)}$, $\mathfrak f_{4(4)}$, $\mathfrak g_{2(2)}$.     For each  quaternionic real form $G$,   there exists  a system of positive roots $\Psi \subset \Phi (\mathfrak g,\mathfrak t)$ so that the maximal root $\alpha_{max}$ in $\Psi$ is compact, $\alpha_{max}$ is orthogonal to all compact simple roots and $\alpha_{max}$ is not orthogonal to each noncompact simple roots. Hence, $\mathfrak k_1(\Psi)\equiv \mathfrak{su}_2(\alpha_{max}) $. The system $\Psi$ is not unique. We appel such a system of positive roots  {\it a quaternionic system}.

 Let us recall that a {\it quaternionic representation} is a Discrete Series for a quaternionic real form  $G$ so that its Harish-Chandra parameter is dominant with respect to a quaternionic system of positive roots,  and so that its lowest $K$-type is equivalent to a irreducible representation for $ K_1(\Psi)$ times the trivial representation for $K_2$.
 A  fact shown in \cite{GW} is: Given a quaternionic system of positive roots,  for all but finitely many representations  $(\tau, W)$ equivalent to the tensor  of  a nontrivial representation for $K_1(\Psi)$ times the trivial representation of $K_2$, it  holds:  $\tau$ is the lowest $K$-type of a quaternionic (unique) irreducible square integrable representation $H^2(G,\tau)$. We define a {\it generalized quaternionic representation} to be a Discrete Series representation $\pi_\lambda$ so that its Harish-Chandra parameter is dominant with respect to a quaternionic system of positive roots.

From Table 1,2 we readily read the pairs $(\mathfrak g,\mathfrak h)$ so that $\mathfrak g$ is a quaternionic Lie algebra and hence, we have a list of  generalized quaternionic representations of $G$ with admissible restriction to $H$.

Let $(G,H)$ denote a symmetric pair so that a quaternionic representation $(\pi_\lambda, H^2(G,\tau))$ is $H$-admissible. Then, from \cite{Vaint} \cite{DV} \cite{DGV} we have:   $ \mathfrak k_1(\Psi_\lambda) \equiv \mathfrak{su}_2(\alpha_{max}) \subset \mathfrak l $ and  $\pi_\lambda$ is $L$-admissible.  In consequence, \cite{Kob}, $\pi_\lambda$ is $H_0$-admissible. By definition, for a quaternionic representation $\pi_\lambda$, we have $\tau_{\vert_L}$ is irreducible,  hence,   $\mathbf{H}^2(H_0,\tau)$ is irreducible. Moreover, after checking on \cite{Vaint} or Tables 1,2,  the list of systems $\Psi_{H_0,\lambda}$, it follows     that $H^2(H_0, \tau)$ is again a quaternionic representation. Finally, in order to present a list of quaternionic representations with multiplicity free restriction to $H$ we recall that  it follows   from the duality Theorem  that $res_H(H^2(G,\tau))$ is multiplicity free if and only if $res_L(H^2(H_0,\tau))$ is a multiplicity free representation, and that    on \cite[Page 88]{GW} it is shown that  a quaternionic representation   for $H_0$ is $L$-multiplicity free if and only if  $\mathfrak h_0 =\mathfrak{sp}(n,1), n\geq 1 $.

 \smallskip

 To follow, we list  pairs $(\mathfrak g,\mathfrak h)$ where multiplicity free restriction holds for all quaternionic representations.

$(\mathfrak{su}(2,2n), \mathfrak{sp}(1,n))$,   $\mathfrak h_0=\mathfrak{sp}(1,n)$, $n\geq 1$.

$(\mathfrak{so}(4,n),\mathfrak{so}(4,n-1))$, $\mathfrak h_0 =\mathfrak{so}(4,1) +\mathfrak{so}(n-1)$ ($n$ even or  odd).

$(\mathfrak{sp}(1, n),\mathfrak{sp}(1,k)+\mathfrak{sp}(n-k) ) $, $\mathfrak h_0=\mathfrak{sp}(1,n-k)+\mathfrak{sp}(k)$.

$(\mathfrak{f}_{4(4)},  \mathfrak{so}(5,4))$,   $\mathfrak h_0 =\mathfrak{sp}(1,2)\oplus \mathfrak{su}(2)$.

$(\mathfrak e_{6(2)}, \mathfrak f_{4(4)})$, $\mathfrak h_0= \mathfrak{ sp}(3,1)$.

\smallskip

 A special pair is:

$(\mathfrak{su}(2,2), \mathfrak{sp}(1,1)),  $ $\mathfrak h_0=\mathfrak{sp}(1,1)$.

 Here, multiplicity free holds for any $\pi_\lambda$ so that $\lambda$ is dominant with respect to a system of positive roots that defines a quaternionic structure on $G/K$. For details see \cite[Table 2]{Vaint} or {Explicit example II}.

 \medskip

\subsubsection{More examples of multiplicity free restriction}

 Next, we list pairs $(\mathfrak g,\mathfrak h)$ and systems of positive roots  $\Psi \subset \Phi(\mathfrak g,\mathfrak t)$ so that    $\pi_{\lambda^\prime}$ is $H$- admissible and multiplicity free for every element  $ \lambda^\prime$ dominant with respect to  $\Psi$.    We follow  either  Table 1,2,3 or \cite{KO2}.   For each $(\mathfrak g, \mathfrak h)$ we list the corresponding  $  \mathfrak h_0 $.

\medskip

$(\mathfrak{su}(m,n),\mathfrak{su}(m,n-1)+\mathfrak u(1))$, $\Psi_a, \tilde\Psi_b $, \\ \phantom{xxxxxxxxxxxxxxxxxxxx} $\mathfrak h_0 =\mathfrak{su}(m,1) +\mathfrak{su}(n-1)+\mathfrak u(1)$.

$(\mathfrak{so}(2m,2n+1),\mathfrak{so}(2m,2n))$,   $\Psi_{\pm}$,  $\mathfrak h_0= \mathfrak{so}(2m,1)+\mathfrak{so}(2n)$.

$(\mathfrak{so}(2m,2),\mathfrak{so}(2m,1))$,  $\Psi_{\pm}$, $\mathfrak h_0= \mathfrak{so}(2m,1)$.

$(\mathfrak{so}(2m,2n),\mathfrak{so}(2m,2n-1)), n>1$, $\Psi_{\pm}$, $\mathfrak h_0 =\mathfrak{so}(2m,1) +\mathfrak{so}(2n-1)$.

\subsection{Explicit examples}
\subsubsection{Quaternionic representations for $Sp(1,b)$}\label{sub:Ltypesspd1} For further use we present a intrinsic description for the $Sp(1)\times Sp(b)$-types of a quaternionic representation for $Sp(1,b)$, a proof of the statements is in \cite{GW0}. The quaternionic representations for $Sp(1,b)$ are the representations of lowest  $Sp(1)\times Sp(b)$-type $S^n(\mathbb C^{2})\boxtimes \mathbb C, n\geq 1$.  We label the simple roots for the quaternionic system of positive roots  $\Psi$ as in \cite{GW},   $\beta_1, \dots, \beta_{b+1}$, the long root is
$\beta_{b+1}$, $\beta_1$ is adjacent to just one simple root and  the maximal root $\beta_{max}$ is adjacent to $-\beta_1$.  Let $\Lambda_1, \dots, \Lambda_{d+1}$ the associated fundamental weights. Thus, $\Lambda_1=\frac{\beta_{max}}{2}$.
Let $\tilde{\Lambda}_1, \dots, \tilde{\Lambda}_b$ denote the fundamental weights for $"\Psi \cap \Phi(\mathfrak{sp}(b))"$.
The irreducible $L=Sp(1)\times Sp(b)$-factors of

  \phantom{xxx} $ H^2(Sp(1,b), \pi_{n\frac{\beta_{max}}{2}}^{Sp(1)}\boxtimes \pi_{\rho_{Sp(b)}}^{Sp(b)})= H^2(Sp(1,b), S^{n-1}(\mathbb C^2) \boxtimes \mathbb C)$

\noindent
are

 \hskip 0.7cm $\{S^{n-1+m}(\mathbb C^2)\boxtimes S^m(\mathbb C^{2b} )  \equiv \pi_{(n+m)\frac{\beta_{max}}{2}}^{Sp(1)}\boxtimes \pi_{m\tilde{\Lambda}_1 +\rho_{Sp(b)}}^{Sp(b)}  ,  \, m\geq 0 \}$.

The multiplicity of each $L$-type in $ H^2(Sp(1,b), S^{n-1}(\mathbb C^2) \boxtimes \mathbb C)$ is one.

 \subsubsection{Explicit example I} \label{sub:expexam} We develop this example in detail. We restrict quaternionic representations for $Sp(1,d)$ to $Sp(1,k)\times Sp(d-k)$. For this,  we need to review definitions and facts in \cite{GW0}\cite{KO2} \cite{Vaint}. The group $G:=Sp(1,d)$ is a subgroup of $GL(\mathbb C^{2+2d})$. A maximal compact subgroup of $Sp(1,d)$ is the usual immersion of $Sp(1)\times Sp(d)$.   Actually, $Sp(1,d)$ is  a quaternionic real form for $Sp(\mathbb C^{1+d})$.  $Sp(1,d)$ has a compact Cartan subgroup $T$ and there exists a  orthogonal basis $\delta, \epsilon_1, \dots, \epsilon_d$ for $i\mathfrak t^\star$ so that

 $\Phi(\mathfrak{sp}(d+1,\mathbb C), \mathfrak t)=\{ \pm 2\delta, \pm 2\epsilon_1, \dots, \pm 2\epsilon_d, \pm (\epsilon_i \pm \epsilon_j), 1\leq i < j \leq d, \pm (\delta \pm \epsilon_s), 1\leq s \leq d \}$.

  We fix $1 \leq k <d$. We consider  the usual immersion of  $H:=Sp(1,k)\times Sp(d-k)$ into $Sp(1,d)$. Thus,

  \noindent
  $\Phi(\mathfrak h,\mathfrak t):= \{ \pm 2\delta, \pm 2\epsilon_1, \dots, \pm 2\epsilon_d, \pm (\epsilon_i \pm \epsilon_j), 1\leq i < j \leq k, \,\\    \phantom{xxxxxxxxxxxxxxxxxxxxxxxxxxxxxxc} \text{or},\,   k+1\leq i < j \leq d, \pm (\delta \pm \epsilon_s), 1\leq s \leq k \}$.

  Then,  $H_0$ is isomorphic to $ Sp(1,d-k) \times Sp(k) $. We have

  $\Phi(\mathfrak h_0,\mathfrak t):= \{ \pm 2\delta, \pm 2\epsilon_{1}, \dots, \pm 2\epsilon_d, \pm (\epsilon_i \pm \epsilon_j), k+1\leq i < j \leq d, \, \\    \phantom{xxxxxxxxxxxxxxxxxxxxxxxxxxxxxxc} \text{or},\,  1\leq i < j \leq k, \pm (\delta \pm \epsilon_s), k+1\leq s \leq d \}$.

From now on, we fix the quaternionic system of positive roots\\
\indent
$\Psi:= \{  2\delta,   2\epsilon_1, \dots,   2\epsilon_d,  (\epsilon_i \pm \epsilon_j), 1\leq i < j \leq d,   (\delta \pm \epsilon_s), 1\leq s \leq d \}$. \\ Then,   $\alpha_{max}=2\delta$, $\rho_n^\Psi = d \delta $.   The Harish-Chandra parameter $\lambda$ of a quaternionic representation $\pi_\lambda$ is dominant with respect to $\Psi$. Hence, $ \Psi_\lambda=\Psi$. The systems in Theorem~\ref{prop:GWKO} are  $\Psi_{H,\lambda}=\Phi(\mathfrak h,\mathfrak t)\cap \Psi$, $\Psi_{H_0,\lambda}=\Phi(\mathfrak h_0,\mathfrak t)\cap \Psi$. Also, \cite{DV}, $\Phi (\mathfrak k_1:=\mathfrak k_1(\Psi), \mathfrak t_1:=\mathfrak t \cap \mathfrak k_1)=\{ \pm 2\delta \}$, $\Phi (\mathfrak k_2:=\mathfrak k_2(\Psi), \mathfrak t_2:=\mathfrak t \cap \mathfrak k_2)=\{ \pm 2\epsilon_1, \dots, \pm 2\epsilon_d, \pm (\epsilon_i \pm \epsilon_j), 1\leq i < j \leq d\}$.   Thus, $K_1(\Psi)\equiv SU_2(2\delta)\equiv Sp(1)\subset H$, $K_2\equiv Sp(d)$. Hence, for a Harish-Chandra parameter $\lambda=(\lambda_1, \lambda_2), \lambda_j \in i\mathfrak t_j^\star $ dominant with respect to $\Psi$, the representation $\pi_\lambda$ is $H$-admissible.

 The lowest $K$-type of a generalized quaternionic representation $\pi_\lambda$ is the representation $\tau= \pi_{\lambda +\rho_n^\lambda}^K= \pi_{\lambda_1 +d\delta}^{K_1}\boxtimes \pi_{\lambda_2}^{K_2}$. Since, $\rho_{K_2}= d\epsilon_1 +(d-1)\epsilon_2 +\dots +\epsilon_d $,  for $n\geq d+1$, the functional $\mathfrak t^\star \ni \lambda_n:= n \delta +\rho_{K_2} $ is a Harish-Chandra parameter dominant with respect to $\Psi$   and the lowest $K$-type $\tau_n$ of $\pi_{\lambda_n}$ is
$\pi_{(n+d)\delta}^{K_1}\boxtimes \pi_{\rho_{K_2}}^{K_2}$. That is, $\pi_{ \lambda_n +\rho_n^\lambda}^K$ is equal to a irreducible representation of $K_1\equiv Sp(1)=SU_2(2\delta)$ times the trivial representation of $K_2\equiv Sp(d)$. The family  $(\pi_{\lambda_n})_n$ exhausts, up to equivalence,  the set of quaternionic representations for $Sp(1,d)$. Now, $\mathbf{H}^2(H_0,\tau_n)$ is the irreducible representation of lowest $L$-type   equal to the irreducible representation $\pi_{(n+d)\delta}^{K_1}$ of $K_1$ times the trivial representation of $K_2 \cap L$.    Actually, $\mathbf{H}^2(H_0, \pi_{(n+d)\delta}^{K_1}\boxtimes \pi_{\rho_{K_2}}^{K_2})$ is a realization of the  quaternionic  representation  $H^2(Sp(1,n-k),\pi_{(n+d)\delta}^{Sp(1)}\boxtimes \pi_{\rho_{Sp(n-k)}}^{Sp(n-k)})$ for $Sp(1,d-k)$ times the trivial representation of $Sp(k)$.  In \cite[Proposition 6.3]{GW0} it is shown that  the representation $H^2(Sp(1,n-k),\pi_{(n+d)\delta}^{Sp(1)}\boxtimes \pi_{\rho_{Sp(n-k)}}^{Sp(n-k)})$  restricted to $L$ is a multiplicity free representation, and also, they list the highest weights of the totality of  $L$-irreducible factors. To follow we explicit such a computation.
For this we recall \ref{sub:Ltypesspd1} and notice $b=d-k$; $\Lambda_1=\delta$, $\beta_{max}=2\delta$,  $\tilde{\Lambda}_1=\epsilon_1$;  as $Sp(1)$-module,  $S^p(\mathbb C^2)\equiv \pi_{(p+1) \delta}^{SU_2(2\delta)}$;  for $p\geq 1$,  as $Sp(p)$-module $S^m(\mathbb C^{2p})\equiv \pi_{ m\epsilon_1+\rho_{Sp(p)}}^{Sp(p)}$. Then,

\noindent
 the irreducible $L=Sp(1)\times Sp(d-k)\times Sp(k)$-factors of

\bigskip
   $ \mathbf H^2(H_0, \pi_{(n+d)\delta }^{K_1}\boxtimes \pi_{\rho_{K_2}}^{K_2})  \equiv H^2(Sp(1, d-k), \pi_{(n+d)\delta }^{Sp(1)}\boxtimes \pi_{\rho_{Sp(d-k)}}^{Sp(d-k)})\boxtimes \mathbb C$

   \bigskip
   \noindent
    are multiplicity free and   it is the set of inequivalent representations

\bigskip
 $\{S^{n+d-1+m}(\mathbb C^2)\boxtimes S^m(\mathbb C^{2(d-k)} )\boxtimes \mathbb C \\ \phantom{xxxxxxxxxxxxxxxxx} \equiv \pi_{(n+d+m)\delta}^{Sp(1)}\boxtimes \pi_{m\epsilon_{k+1} +\rho_{Sp(d-k)}}^{Sp(d-k)} \boxtimes \pi_{\rho_{Sp(k)}}^{Sp(k)},  \, m\geq 0 \}$.

 Here, $\rho_{Sp(d-k)}=(d-k)\epsilon_{k+1}+ (d-k-1)\epsilon_{k+2}+\dots +\epsilon_d   $ and $\rho_{Sp(k)}=k\epsilon_1 + (k-1)\epsilon_2 +\dots +\epsilon_k$.

 We compute $\Psi_{H,\lambda}= \{  2\delta,   2\epsilon_1, \dots,   2\epsilon_d,   (\epsilon_i \pm \epsilon_j), 1\leq i < j \leq k \,\text{or}\,   k+1\leq i < j \leq d,  (\delta \pm \epsilon_s), 1\leq s \leq k \}$.
$\rho_n^\mu=\rho_n^H=k\delta$.
  Now, from  Theorem~\ref{prop:Rwelldefgc}  we have $Spec_H(\pi_\lambda) +\rho_n^H =Spec_L(\mathbf H^2(H_0,\tau))$, whence,  we conclude:

\smallskip

 The representation $res_{Sp(1,k)\times Sp(d-k)}(\pi_{\lambda_n}^{Sp(1,d)})$ is a multiplicity free representation and the totality of Harish-Chandra parameters of the $Sp(1,k)\times Sp(d-k)$-irreducible factors is the set
\bigskip

 $\{ (n+d+m)\delta + m \epsilon_{k+1}+\rho_{Sp(k)}+\rho_{Sp(d-k)} \}-\rho_n^H = \\  \phantom{xxx}(n+d+m-k)\delta + m \epsilon_{k+1}    + (d-k)\epsilon_{k+1}+\dots +\epsilon_d + k\epsilon_1 +\dots +\epsilon_k , m\geq 0 \}$.

\bigskip
Whence, $res_{Sp(1,k)\times Sp(d-k)}(\pi_{\lambda_n}^{Sp(1,d)})$ is equivalent  to  the Hilbert sum

\bigskip

 $\oplus_{m\geq 0} V_{(n+d+m-k) \delta  + m \epsilon_{k+1}+\rho_{Sp(k)}+\rho_{Sp(d-k)}}^{Sp(1,k)\times Sp(d-k)}\\ \phantom{xxx}\equiv \oplus_{m\geq 0}H^2(Sp(1,k)\times Sp(d-k), \pi_{(n+d+m)\delta + m \epsilon_{k+1}+\rho_{Sp(k)}+\rho_{Sp(d-k)}}^{Sp(1)\times Sp(k)\times Sp(d-k)}) $.

\medskip
A awkward point of our decomposition is that does not provide an explicit  description of the $H$-isotypic components for $res_H(V_\lambda^G)$.

 \smallskip

 \subsubsection{Explicit example II} We restrict from  $Spin(2m,2), m \geq 2$, to $Spin(2m,1)$. We notice the   isomorphism between $(Spin(4,2), Spin(4,1))$  and  the pair $(SU(2,2),Sp(1,1))$. In this setting   $K=Spin(2m)\times Z_K$, $L=Spin(2m)$, $ Z_K\equiv \mathbb T$.  Obviously, we may conclude that any irreducible representation of $K$ is irreducible when restricted to $L$. In this case $H_0 \equiv Spin(2m,1)$, and (for $m=2$,  $H_0 \equiv Sp(1,1)$) and $\mathbf H^2(H_0,\tau)$ is irreducible. Therefore, the duality theorem together with that  any irreducible representation for $    Spin(2m,1) $ is $L=Spin(2m)$-multiplicity free   \cite[page 11]{Th}, we obtain:

 \smallskip
  {\it Any $    Spin(2m,1) $-admissible representation $ \pi_\lambda^{Spin(2m,2)} $ is multiplicity free.}
  \smallskip

For $(Spin(2m,2), Spin(2m,1))$    in \cite[Table 2 ]{Vaint}, \cite{KO2} it is verified that any $\pi_\lambda$, with $\lambda$ dominant with respect to one of the systems $\Psi_{\pm}$ (see proof of \ref{lem:equalm})  has  admissible restriction to $Spin(2m,1)$ and no other $\pi_\lambda$ has admissible restriction to $Spin(2m,1)$.

  \smallskip
  In \cite[Table 2 ]{Vaint} \cite{Kob1} \cite{Kob} it is verified that any  square integrable representation $\pi_\lambda$ with $\lambda$ dominant with respect to a quaternionic system  for $SU(2,2)$, has  admissible restriction to $Sp(1,1)$.  As in \ref{sub:expexam}, we may compute the Harish-Chandra parameters for the irreducible components of $res_{Sp(1,1)}(\pi_\lambda^{SU(2,2)})$.

\subsubsection{Explicit example III}
 \,\,  $ (\mathfrak{e}_{6(2)}, \mathfrak f_{4(4)}). $ We fix a compact Cartan subgroup $T\subset K$ so that $U:=T\cap H$ is a compact Cartan subgroup of $L=K\cap H$. Then, there exist a quaternionic and Borel de Siebenthal positive root system $\Psi_{BS}$ for $\Phi(\mathfrak e_6, \mathfrak t)$ so that,   after   we write the simple roots   as in Bourbaki (see \cite{GW0}\cite{Vaint}),    the compact simple roots are $\alpha_1,  \alpha_3, \alpha_4, \alpha_5, \alpha_6$ (They determinate the $A_5$-Dynkin sub-diagram)  and $\alpha_2$ is   noncompact. $\alpha_2$ is adjacent to $-\alpha_{max}$ and to $\alpha_4$. In  \cite{Vaint}, it is verified $\Psi_{BS}$ is the unique  system of positive roots   such that $\mathfrak k_1( \Psi_{BS}) =\mathfrak{su}_2(\alpha_{max}).$

     The automorphism  $\sigma$  of $\mathfrak g$  acts on the simple roots as follows $$\sigma (\alpha_2)=\alpha_2, \,\,  \sigma (\alpha_1)=\alpha_6, \,\,  \sigma( \alpha_3 )=\alpha_5, \,\, \sigma (\alpha_4)=\alpha_4.$$ Hence, $\sigma(\Psi_{BS})=\Psi_{BS}.$  Let $h_2 \in i\mathfrak t^\star $ be so that $\alpha_j(h_2)=\delta_{j 2} $ for $j=1,\dots, 6.$  Then,   $h_2=\frac{2H_{\alpha_{max}}}{(\alpha_{max}, \alpha_{max})}$ and $\theta= Ad(exp(\pi i h_2)).$   A straightforward computation yields: $\mathfrak k \equiv  \mathfrak{su}_2(\alpha_{max}) + \mathfrak{su}(6)$, $\mathfrak l\equiv \mathfrak{su}_2(\alpha_{max}) + \mathfrak{sp}(3) $;  the fix point subalgebra for $\theta \sigma$ is isomorphic to $\mathfrak{sp}(1,3)$. Thus, the pair $(\mathfrak e_{6(2)}, \mathfrak{sp}(1,3))$ is the associated pair to  $ (\mathfrak e_{6(2)}, \mathfrak f_{4(4)}). $ Let $q_\mathfrak u$ denote the restriction map from $\mathfrak t^\star$ into $\mathfrak u^\star$. Then, for $\lambda$ dominant with respect to $\Psi_{BS}$,  the simple roots for $\Psi_{H,\lambda}=\Psi_{\mathfrak f_{4(4)},\lambda}$,  respectively $\Psi_{\mathfrak{sp}(1,3), \lambda},$   are:
\begin{center}
   $\alpha_2, \,\, \alpha_4, \,\,  q_\mathfrak u(\alpha_3)=q_\mathfrak u (\alpha_5), \,\,  q_\mathfrak u(\alpha_1)=q_\mathfrak u (\alpha_6). $

 $  \beta_1=q_\mathfrak u(\alpha_2 + \alpha_4 +\alpha_5)=q_\mathfrak u (\alpha_2 + \alpha_4 +\alpha_3), \,\, \beta_2=q_\mathfrak u(\alpha_1)=q_\mathfrak u (\alpha_6)$,  \\ $\,\,   \beta_3=q_\mathfrak u(\alpha_3)=q_\mathfrak u (\alpha_5), \,\, \beta_4= \alpha_4.   $
\end{center}

 The fundamental weight $\tilde{\Lambda}_1$ associated to $\beta_1$ is equal to $\frac{1}{2} \beta_{max}$. Hence,     $ \tilde\Lambda_1=\beta_1+\beta_2+  \beta_3 +\frac{1}{2}  \beta_4=\alpha_2+\frac{3}{2}\alpha_4 + \alpha_3+\alpha_5+\frac{1}{2}(\alpha_1+\alpha_6) $.

Thus,   from the Duality Theorem,   for the quaternionic representation

\smallskip
\phantom{xxxxxxxxxxxxxxxxxxxx}$H^2( E_{6(2)}, \pi_{n\frac{\alpha_{max}}{2}+\rho_{SU(6)}}^{SU_2(\alpha_{max})\times SU(6) })$

\medskip

\noindent
  {\it the set of Harish-Chandra parameters of the irreducible $F_{4(4)}$-factors is equal to:

  \medskip

   $-\rho_n^H  $ plus the set of infinitesimal characters of the  $L\equiv SU_2(\alpha_{max})  \times Sp(3) $-irreducible factors for}

\phantom{xxxxxxxxx} $res_{SU_2(\alpha_{max}) \times Sp(3)} (H^2(Sp(1,3),  \pi_{n \frac{\alpha_{max}}{2} + \rho_{Sp(3)}}^{SU_2(\alpha_{max})  \times Sp(3)}))$.

Here, $-\rho_n^H =-d_H \frac{\alpha_{max}}{2} $,  $d_H=d_{\mathfrak f_{4(4)}}=7$ (see \cite{GW}).

\medskip

Therefore, from  the computation in \ref{sub:Ltypesspd1},  we obtain:

\begin{equation*} res_{ F_{4(4)}}( \pi_{n\frac{\alpha_{max}}{2}+\rho_{SU(6)}-\rho_n^G}^{ E_{6(2)}}) =\oplus_{ m\geq 0 }\,\, V_{(n-7+m)\frac{\alpha_{max}}{2} +m\tilde\Lambda_1 +\rho_{Sp(3)}}^{F_{4(4)}}.
 \end{equation*}

Here, $\rho_{Sp(3)}= 3\beta_2 + 5\beta_3 +3\beta_4=\frac{3}{2} (\alpha_1+\alpha_6)+\frac{5}{2} (\alpha_3+\alpha_5)+ 3\alpha_4$.

\subsubsection{Comments on admissible restriction of quaternionic representations} As usual $(G,H)$ is a symmetric pair, $H$ is not a compact  group and $(\pi_\lambda ,H^2(G,\tau))$ is a $H$-admissible, non-holomorphic,  square integrable representation. We further assume $G/K$ as well as $H/L$ holds a quaternionic structure and the inclusion $H/L \hookrightarrow G/K$ respects the   respective quaternionic structures. Then, from Tables 1,2,3 it follows:

a) $ \lambda $ is dominant with respect to a quaternionic system of positive roots. That is, $\pi_\lambda$ is a generalized quaternionic representation.

b)   $H_0/L$ has a quaternionic structure.

c) Each system $\Psi_{H,\lambda}$, $\Psi_{H_0,\lambda}$ is a quaternionic system.

d) The representation $\mathbf H^2(H_0,\tau)$ is a sum of generalized quaternionic representations.

e) When $\pi_\lambda$ is quaternionic, then  the  representation $\mathbf{H}^2(H_0, \tau)$ is equal to $H^2(H_0, res_L(\tau))$, hence, it is quaternionic. Moreover,  in \cite{GW0}  is computed the highest weight and the respective multiplicity of  each of its $L$-irreducible factors.

f) Thus, the duality Theorem~\ref{prop:Rwelldefgc} together with a)---e) let us compute the Harish-Chandra parameters of the irreducible $H$-factors for  a quaternionic representation $\pi_\lambda$. Actually, the computation of the Harish-Chandra parameters is quite similar to the computation in {\it Explicit example I, Explicit example III}.
\medskip

\medskip
In the following paragraph we consider particular quaternionic symmetric pairs. One pair  is $(\mathfrak f_{4(4)}, \mathfrak{so}(5,4))$. Here,  $\mathfrak h_0 \equiv \mathfrak{sp}(1,2)+ \mathfrak{su}(2)$. Thus, for any Harish-Chandra parameter $\lambda$ dominant with respect to the quaternionic system of positive roots, we have $\pi_\lambda$ restricted to $SO(5,4)$ is an admissible representation and the Duality theorem allows us compute either multiplicities or Harish-Chandra parameters of the restriction. Moreover,   since
  quaternionic Discrete Series for $Sp(1,2)\times SU(2)$ are multiplicity free, \cite{GW0}, we have that quaternionic Discrete Series for $\mathfrak f_{4(4)}$, restricted to $SO(5,4)$ are multiplicity free. It seems that it can be deduced from the branching rules for the pair $(Sp(3),Sp(1)\times Sp(2))$ that  a generalized quaternionic representation,  $res_{SO(5,4)}(\pi_\lambda)$ is multiplicity free if and only $\pi_\lambda$ is quaternionic.

\medskip

  For the pair $(\mathfrak f_{4(4)}, \mathfrak{so}(5,4))$, if we attempt to deduce our decomposition result from the work of \cite{GW}, we have to consider the group of Lie algebra $\mathfrak g^d \equiv \mathfrak f_{4(-20)}$,  its maximal compactly embedded subalgebra  is isomorphic to $\mathfrak{so}(9)$, a simple Lie algebra, hence no Discrete Series for $G^d$ has an admissible restriction to $H_0$ (see \cite{KO} \cite{DV}). Thus, it is not clear to us how to deduce our Duality result from the Duality Theorem in  \cite{GW0}.

  \medskip
  For the pairs  $  (\mathfrak e_{6(2)}, \mathfrak{so}(6,4)+\mathfrak{so}(2)),$ $ (\mathfrak e_{7(-5)}, \mathfrak{e}_{6(2)}+\mathfrak{so}(2))$, for each $G$,    generalized quaternionic representations do exist and they are $H$-admissible.  For these pairs, the respective $\mathfrak{h}_0$ are: $ \mathfrak{su}(2,4)+\mathfrak{su}(2), \mathfrak{su}(6,2)$.
In these two cases, the Maple soft developed by  Silva-Vergne\cite{BV}, allows to compute the $L$-Harish-Chandra parameters and respective multiplicity for each Discrete Series for $H_0\equiv SU(p,q)\times SU(r)$, hence, the duality formula yields the Harish-Chandra Parameters for $res_H(\pi_\lambda)$ and their multiplicity.
\subsubsection{Explicit example IV} The pair $(SO(2m,n), SO(2m,n-1))$. This pair is considered in \cite{GW}. We recall their result and we sketch how to derive   the result from our duality Theorem. We only consider the case $\mathfrak g=\mathfrak{so}(2m,2n+1)$. Here, $\mathfrak k=\mathfrak{so}(2m) +\mathfrak{so}(2n+1)$, $\mathfrak h=\mathfrak{so}(2m,2n)$, $\mathfrak h_0=\mathfrak{so}(2m,1)+\mathfrak{so}(2n)$,  $\mathfrak l=\mathfrak{so}(2m)+ \mathfrak{so}(2n)$. We fix a Cartan subalgebra $\mathfrak t \subset \mathfrak l \subset \mathfrak k$. Then, there exists a orthogonal basis
$\epsilon_1, \dots, \epsilon_m, \delta_1, \dots, \delta_n$  for $i\mathfrak t^\star $ so that

 $\Delta = \{(\epsilon_i \pm \epsilon_j), 1 \leq i < j\leq m, (\delta_r \pm \delta_s), 1 \leq r < s \leq n \} \cup \{\delta_j\}_{1\leq j \leq n} .$ $$ \Phi_n = \{ \pm (\epsilon_r \pm \delta_s), r=1, \dots,m, s=1,\dots,n \} \cup \{ \pm \epsilon_j, j=1,\dots,m \}.$$ The systems of positive roots $\Psi_\lambda$ so that $\pi_\lambda^G$ is an admissible representation of $H$ are the systems $\Psi_{\pm}$ associated to the lexicographic orders $ \epsilon_1> \dots> \epsilon_m> \delta_1> \dots> \delta_n $, $  \epsilon_1> \dots>\epsilon_{m-1}> -\epsilon_m> \delta_1> \dots> \delta_{n-1} > -\delta_n. $
Here, for $m \geq 3$, $\mathfrak k_1(\Psi_{\pm})=\mathfrak{so}(2m).$ For $m=2, \mathfrak k_1(\Psi_{\pm})=\mathfrak{su}_2(\epsilon_1 \pm \epsilon_2).$ Then,

$\Psi_{H, +}=  \{(\epsilon_i \pm \epsilon_j), 1 \leq i < j\leq m, (\delta_r \pm \delta_s), 1 \leq r < s \leq n \}\cup  \{  (\epsilon_r \pm \delta_s), r=1, \dots,m, s=1,\dots,n \}$,

 $\Psi_{H_0, +}=  \{(\epsilon_i \pm \epsilon_j), 1 \leq i < j\leq m, (\delta_r \pm \delta_s), 1 \leq r < s \leq n \}\cup \{  \epsilon_j, j=1,\dots,m \}.$

 \smallskip
$\mathfrak g^d=\mathfrak{so}(2m+2n,1)$. Thus, from either our duality Theorem or from \cite{GW}, we infer that whenever  $res_H(\pi_\lambda)$ is $H$-admissible, then, $res_H(\pi_\lambda)$ is a multiplicity free representation.
   Whence, we are left to compute the Harish-Chandra parameters for $res_{SO(2m,2n)}(H^2(SO(2m,2n+1), \pi_{\Lambda_1}^{SO(2m)}\boxtimes \pi_{\Lambda_2}^{SO(2n+1)}))$. For this,    according to the duality Theorem,  we have to compute the infinitesimal characters of  each   irreducible factor of the underlying $L$-module in \\
   $$\mathbf H^2(H_0,\tau)= \sum_{\nu \in Spec_{SO(2n)}(\pi_{\Lambda_2}^{SO(2n+1)})} H^2\big(SO(2m,1), \pi_{\Lambda_1}^{SO(2m)}\big)\boxtimes V_{\nu}^{SO(2n)}$$.    The  branching rules for $ res_{SO(2m)}(H^2(SO(2m,1), \pi_{\Lambda_1}^{SO(2m)}))$ are found in \cite{Th} and  other references, the branching rule for   $res_{SO(2n)}(\pi_{\Lambda_2}^{SO(2n+1)})$  can be found in \cite{Th}. From both computations, we deduce: \cite[Proposition 3]{GW},  for $\lambda=\sum_{1\leq i \leq m} \lambda_i \epsilon_i +\sum_{1\leq j \leq n}  \lambda_{m+j} \delta_j$, then $V_\mu^H$ is a $H$-subrepresentation of $H^2(G,\tau)\equiv V_\lambda^{SO(2m,2n+1)}$ ($\mu=\sum_{1\leq i \leq m} \mu_i \epsilon_i +\sum_{1\leq j \leq n}  \mu_{j+m} \delta_j$) if and only if \\ \phantom{xxxx}$\mu_1>\lambda_1> \dots> \mu_m> \lambda_m, \lambda_{m+1}>\mu_{m+1}>\dots\lambda_{m+n}>\vert \mu_{m+n}\vert.$

\subsection{ Existence of Discrete Series whose  lowest $K$-type restricted to $K_1(\Psi)$ is irreducible} Let $G$ a semisimple Lie group that admits square integrable representations. This hypothesis allows  to fix a compact Cartan subgroup   $T\subset K$ of $G$. In \cite{DV} it is defined for each system of positive roots $\Psi \subset \Phi(\mathfrak g,\mathfrak t)$ a normal subgroup $K_1(\Psi)\subset K$ so that for a symmetric pair $(G,H)$,  with $H$ a $\theta$-invariant subgroup, it holds: for any Harish-Chandra parameter dominant with respect to $\Psi$, the representation $res_H(\pi_\lambda)$ is $H$-admissible if and only if $K_1(\Psi)$ is a subgroup of $H$. For a holomorphic system $\Psi$,   $K_1(\Psi)$ is equal to the center of $K$; for a quaternionic system of positive roots $K_1(\Psi) \equiv SU_2(\alpha_{max})$.   Either for the holomorphic family or for  a quaternionic real forms we find that among the $H$-admissible Discrete Series for     $G$, there are many examples of the following nature: the lowest $K$-type of $\pi_\lambda$ is equal to a irreducible representation of $K_1(\Psi)$ tensor with the trivial representation for $K_2$, \cite{GW}. To follow, under the general setting at the beginning of this paragraph, we verify.

\subsubsection{}\label{sub:existencep}{\it For each  system of positive roots $\Psi \subset \Phi(\mathfrak g,\mathfrak t)$,  there exist Discrete Series with Harish-Chandra parameter dominant with respect to $\Psi$ and so that its lowest $K$-type is equal to a irreducible representation of $K_1(\Psi)$ tensor with the trivial representation for $K_2(\Psi)$.}

We may assume $K_1(\Psi)$ is a proper subgroup of $K$. Then, when $K_1(\Psi)=Z_K$,  Harish-Chandra showed there exists such a representation. For $G$ a quaternionic real form, $\Psi$ a quaternionic system of positive roots,  $K_1(\Psi)=SU_2(\alpha_{max})$, then, in \cite{GW} we find a proof of the statement. From the tables in \cite{DV}\cite{Vaint}, we are left to consider   the triples $(G, K,K_1(\Psi))$ so that their respective Lie algebras are in  the triples

\smallskip

$(\mathfrak{su}( m,  n ), \mathfrak{su}(m )+\mathfrak{su}(n)+\mathfrak u(1), \mathfrak{su}(m ) ), m>2$,

$(\mathfrak{sp}(m, n), \mathfrak{sp}(m)+\mathfrak{sp}(n) ,   \mathfrak{sp}(m ) )$.

$(\mathfrak{so}(2m, n), \mathfrak{so}(2m )+\mathfrak{so}(n ) ,  \mathfrak{so}(2m )) $.

\smallskip

We   analyze  the second triple of the list. To follow, $G$ is so that its Lie algebra is
 $\mathfrak{sp}(m, n)$, $ n\geq 2, m>1$.  We want  to produce Discrete Series representations  so that the lowest $K$-type restricted to $K_1(\Psi)$ is still irreducible. Here, $\mathfrak k= \mathfrak{sp}(m)+\mathfrak{sp}(n)$. We fix maximal torus $T \subset K$ and describe the root system as in \cite{Vaint}. For the system of positive roots   $\Psi :=\{ \epsilon_i \pm \epsilon_j, i<j, \delta_r \pm \delta_s, r<s, \epsilon_a \pm \delta_b, 2\epsilon_a, 2\delta_b, 1\leq a, i,j \leq m, 1\leq b,r,s \leq n\}$,
we have $K_1(\Psi)=K_1 \equiv Sp(m), K_2 (\Psi)=K_2\equiv Sp(n)$. Obviously, there exists a system of positive roots $\tilde \Psi$ so that $K_1(\tilde \Psi) \equiv Sp(n), K_2 (\tilde \Psi)\equiv Sp(m)$. For any other system of positive roots in $\Phi(\mathfrak g,\mathfrak t)$ we have that the associated subgroup $K_1$ is equal to $K$.  It readily follows that $\lambda:=\sum_{1\leq j\leq m} a_j \epsilon_j +\rho_{K_2}$ is a $\Psi$-dominant Harish-Chandra parameter when the coefficients $a_j$  are all integers so that $a_1>\dots >a_m>>0$. Since $\rho_n^\lambda$ belongs to $span_\mathbb C \{\epsilon_1,\dots,\epsilon_m\}$, it follows that the lowest $K$ type of $\pi_\lambda$ is equivalent to a irreducible representation for $Sp(m)$ times the trivial representation for $Sp(n)$.  With the same proof it is verified that the statement holds for the third triple. For the first triple, we further assume $G=SU(p,q)$. Thus, $K$ is the product of two simply connected subgroups times a one dimensional torus $Z_K$,  we notice $\rho_n^{\Psi_a}=\rho_\mathfrak g^\lambda-\rho_K$, hence, $\rho_n^{\Psi_a}$ lifts to a character of $K$. Thus, as in the case $\mathfrak{sp}(m,n)$, we obtain $\pi_\lambda$ with $\lambda$ dominant with respect to $\Psi_a$ so that its lowest    $K=SU(p)SU(q)Z_K$-type  is the tensor product of a irreducible representation for $SU(p)Z_K$ times the trivial representation for $SU(q)$. Since $\rho_n^{\Psi_a}$ lifts to a character of $K$, after some computation the claim follows.

\section{Symmetry breaking operators and  normal derivatives } For this subsection $(G,H)$ is a symmetric pair and $\pi_\lambda$ is a   square integrable representation. Our aim is to generalize a result  in   \cite[Theorem 5.1]{Na}. In \cite{KP2} are considered symmetry breaking operators expressed by means of normal derivatives, they obtain results for holomorphic embedding of a rank one symmetric pairs.   As before, $H_0=G^{\sigma \theta}$ is the associated subgroup. We recall $\mathfrak h\cap \mathfrak p$ is orthogonal to $\mathfrak h_0\cap \mathfrak p$ and that $\mathfrak h\cap \mathfrak p\equiv T_{eL}(H/L)$, $\mathfrak h_0\cap \mathfrak p\equiv T_{eL}(H_0/L)$. Hence, for $X \in \mathfrak h_0\cap \mathfrak p$, more generally for $X \in \mathcal U(\mathfrak h_0)$, we say $R_X$ is a normal derivative  to $H/L$ differential operator. For short, {\it normal derivative}.  Other ingredient necessary for the next Proposition are the subspaces $\mathcal L_\lambda$ and $\mathcal U(\mathfrak h_0)W$. The latter subspace is contained in the subspace of $K$-finite vectors, whereas, according to a well known conjecture,  whenever $res_H(\pi_\lambda)$ is not discretely decomposable, the former subspace is disjoint to the subspace of $G$-smooth vectors. When, $res_H(\pi_\lambda)$ is $H$-admissible $\mathcal L_\lambda$ is contained in the subspace of $K$-finite vectors. However, it might not be equal to $\mathcal U(\mathfrak h_0)W$ as we have pointed out. The next Proposition and its converse, dealt with consequences of the equality $\mathcal L_\lambda = \mathcal U(\mathfrak h_0)W$.
\begin{prop}\label{prop:symmenormal} We assume $(G,H)$ is a symmetric pair.  We also assume there exists a irreducible representation $(\sigma, Z)$ of $L$ so that $H^2(H,\sigma)$ is a irreducible factor of $H^2(G,\tau)$ and  $ H^2(G,\tau)[H^2(H,\sigma)][Z]=\mathcal L_\lambda[Z]= \mathcal U(\mathfrak h_0)W[Z]=L_{\mathcal U(\mathfrak h_0)}(H^2(G,\tau)[W])[Z]$. Then,  $res_H(\pi_\lambda)$ is $H$-admissible. Moreover, any symmetry breaking operator from $H^2(G,\tau)$ into $H^2(H,\sigma)$ is represented by a normal derivative  differential operator.
\end{prop}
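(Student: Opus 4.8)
The plan is to deduce Proposition~\ref{prop:symmenormal} directly from Theorem~\ref{prop:Rwelldefgc} and the structural analysis in Proposition~\ref{prop:gentau}, by observing that the hypothesis forces everything relevant to live inside the $K$-finite subspace where $\mathcal{U}(\mathfrak{h}_0)$ acts freely. First I would establish $H$-admissibility: the assumption includes $H^2(G,\tau)[H^2(H,\sigma)][Z] = \mathcal{U}(\mathfrak{h}_0)W[Z]$, and the right-hand side is contained in the subspace of $K$-finite vectors (indeed of analytic vectors) by the discussion preceding Lemma~\ref{lem:injecrh}. Thus $(V_\lambda^G)_{K\text{-fin}}$ contains a nonzero vector which is $\mathfrak{z}(\mathcal{U}(\mathfrak{h}))$-finite — it lies in the isotypic subspace $V_\lambda^G[H^2(H,\sigma)][Z]$, hence generates (under $\mathfrak{h}$) a subrepresentation whose $L$-lowest type is $\sigma$, so by Harish-Chandra's theorems (cf.\ the Remark at the end of subsection~5.1, citing \cite[Cor.\ 3.4.7, Thm.\ 4.2.1]{Wa1}) it contains a nontrivial irreducible $(\mathfrak{h},L)$-submodule. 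Then Kobayashi's criterion \cite[Lemma 1.5]{Kob} gives that $res_H(\pi_\lambda)$ is discretely decomposable, and since any irreducible $H$-subrepresentation of a Discrete Series is again square integrable \cite{GW}, discrete decomposability together with the finiteness of each multiplicity (which one gets from admissibility of the $\mathcal{U}(\mathfrak{h}_0)$-action once $K_1 \subset H$, via \cite{DV}, \cite{Kob1}) yields $H$-admissibility.

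Next I would show that every $T \in Hom_H(H^2(G,\tau), H^2(H,\sigma))$ is a normal derivative. By the mirror statement to Theorem~\ref{prop:Rwelldefgc} (the last Remark before subsection~3.3), $Hom_H(H^2(G,\tau),H^2(H,\sigma)) \cong Hom_L(Z, \mathbf{H}^2(H_0,\tau))$, and since $H^2(H,\sigma)$ is an irreducible factor with $res_L(\sigma)=\sigma$ of multiplicity one as a lowest $L$-type, Proposition~\ref{prop:gentau}(b) together with the hypothesis $\mathcal{L}_\lambda[Z]=\mathcal{U}(\mathfrak{h}_0)W[Z]$ shows this $Hom$-space is realized using the map $D = $ identity (Remark~\ref{rmk:estruD}, and the second Corollary to Proposition~\ref{prop:gentau}). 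The key point is that under the equality hypothesis, the holographic/symmetry-breaking operator $T$ is reconstructed from its restriction-to-$H_0$ data via the explicit formulas in subsection~\ref{sub:inverserd}: the adjoint $T^\star \in Hom_H(H^2(H,\sigma), H^2(G,\tau))$ has kernel $K_{T^\star}(e,\cdot)z \in V_\lambda^G[H^2(H,\sigma)][Z]$, which by hypothesis equals $\mathcal{U}(\mathfrak{h}_0)W[Z] = L_{\mathcal{U}(\mathfrak{h}_0)}(H^2(G,\tau)[W])[Z]$; hence each such kernel vector is literally $L_X(K_\lambda(\cdot,e)^\star w)$ for some $X \in \mathcal{U}(\mathfrak{h}_0)$, $w\in W$. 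Dualizing, applying $T$ to a section $f \in H^2(G,\tau)$ produces an expression of the form $z \mapsto (R_X f)|_H$ evaluated through the reproducing kernel identity — i.e.\ a normal derivative $R_X$ restricted to $H$ followed by a pointwise $L$-map $W[\sigma] \to Z$.

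To make the last step precise I would argue as follows. Write $H^2(G,\tau)[W] = "W" = \{K_\lambda(\cdot,e)^\star w : w\in W\}$, a copy of $W$ as $L$-module. The hypothesis says $\mathcal{U}(\mathfrak{h}_0)"W"$, intersected with the $\sigma$-isotypic component, is exactly the $\sigma$-lowest-$L$-type subspace of $H^2(H,\sigma)$ sitting inside $V_\lambda^G$. Given $T \in Hom_H(H^2(G,\tau),H^2(H,\sigma))$, I would use that $T$ is determined by its behavior on $"W"[\sigma]$ and by $H$-equivariance; since $"W"[\sigma]$ generates under $\mathcal{U}(\mathfrak{h})$ (in fact under $\mathcal{U}(\mathfrak{h}_0)$, by the admissibility and the structure in Proposition~\ref{prop:gentau}) a copy of $H^2(H,\sigma)$, one writes $T(f)(h) = \Xi\big(\,$(a finite sum of $R_{X_i} f)(h)\,\big)$ with $X_i \in \mathcal{U}(\mathfrak{h}_0)$ and $\Xi: W \to Z$ an $L$-map. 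Because $\mathfrak{h}_0 \cap \mathfrak{p} = T_{eL}(H_0/L)$ is the orthogonal complement of $\mathfrak{h}\cap\mathfrak{p} = T_{eL}(H/L)$ in $\mathfrak{p}$, the operators $R_X$, $X \in \mathcal{U}(\mathfrak{h}_0)$, are by definition normal derivatives to $H/L$; this is precisely the claim.

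The main obstacle I anticipate is the bookkeeping in the last paragraph: turning the abstract isomorphism $Hom_H(H^2(G,\tau),H^2(H,\sigma)) \cong Hom_L(Z,\mathbf{H}^2(H_0,\tau))$ into a concrete statement that the operator \emph{is} a normal-derivative operator, rather than merely agreeing with one on a generating subspace. One has to check that the element $X \in \mathcal{U}(\mathfrak{h}_0)$ producing the kernel can be chosen so that $R_X$ is defined on all of $H^2(G,\tau)$ (not just on $K$-finite vectors) and that the resulting operator is continuous $H^2(G,\tau) \to H^2(H,\sigma)$ — this is where one invokes that $\mathcal{U}(\mathfrak{h}_0)W$ consists of analytic vectors and that $r_0$ (equivalently $r$) is $(L^2,L^2)$-continuous on the relevant closed subspace, together with the adjoint/kernel formalism of subsection~\ref{sec:kernelprop}. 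The converse direction (a normal-derivative operator always lands in $\mathrm{Cl}(\mathcal{U}(\mathfrak{h}_0)W)$, hence in this setting in $H^2(H,\sigma)$) is comparatively routine and follows from Lemma~\ref{lem:injecrh} applied with $\mathfrak{h} := \mathfrak{h}_0$.
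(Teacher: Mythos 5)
Your proposal follows essentially the same route as the paper's proof: the hypothesis places $\mathcal U(\mathfrak h_0)W[Z]=\mathcal L_\lambda[Z]$ inside the $K$-finite vectors, Harish-Chandra's finiteness theorem produces an irreducible $(\mathfrak h,L)$-submodule, and Kobayashi's criterion gives algebraic discrete decomposability (the paper then concludes admissibility by citing the equivalence of \cite[Theorem 4.2]{Kob} for symmetric pairs, which is cleaner than your appeal to $K_1\subset H$ and finiteness of multiplicities, since that inclusion is itself derived from admissibility); for the second claim, both you and the paper write each kernel vector $K_S(\cdot,e)^\star z_p$ as $L_{D_p}K_\lambda(\cdot,e)^\star w_p$ with $D_p\in\mathcal U(\mathfrak h_0)$ and dualize through the reproducing-kernel identity to obtain $S(f)(h)=\sum_p(R_{\check D_p^\star}f(h),w_p)_W\,z_p$. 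The passage from smooth vectors to all of $H^2(G,\tau)$, which you correctly flag as the delicate point, is exactly where the paper invokes the argument of \cite[Proof of Lemma 2]{OV2}.
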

We show a converse to   Proposition~\ref{prop:symmenormal} in \ref{sub:convprop}.
\begin{proof} We begin by recalling that  $H^2(G,\tau)[W]=\{ K_\lambda(\cdot,e)^\star w, w\in W\}$ is a subspace of $H^2(G,\tau)_{K-fin}$, hence $L_{\mathcal U(\mathfrak h_0)}(H^2(G,\tau)[W])[Z]$ is a subspace of $H^2(G,\tau)_{K-fin}$. Owing to our hypothesis we then have   $\mathcal L_\lambda[Z]$ is a subspace of $H^2(G,\tau)_{K-fin}$. Next, we quote a
result
	of Harish-Chandra: a $\mathcal U(\mathfrak h)-$finitely generated, $\mathfrak z(\mathcal U(\mathfrak h))-$finite,   module has a finite composition
series. Thus, $ H^2(G,\tau)_{K-fin} $ contains an irreducible $(\mathfrak h,L)$-submodule.  For a  proof  (cf. \cite[Corollary 3.4.7 and Theorem 4.2.1]{Wa1}).  Now, in \cite[Lemma 1.5]{Kob1}  we find a proof of: if a $(\mathfrak g,K)-$mod\-ule contains an irreducible $(\mathfrak h,L)-$submodule, then the $(\mathfrak g,K)-$module is $\mathfrak h-$al\-ge\-bra\-i\-cal\-ly
decomposable. Thus,  $res_H(\pi_\lambda)$ is  al\-ge\-bra\-i\-cal\-ly  discretely decomposable.  In \cite[Theorem 4.2]{Kob}, it is shown that under the hypothesis $(G,H)$ is a symmetric pair, for Discrete Series, $\mathfrak h$-algebraically discrete decomposable is equivalent to $H$-admissibility,  hence $res_H(\pi_\lambda)$ is $H$-admissible. Let $S: H^2(G,\tau)\rightarrow H^2(H,\sigma)=V_\mu^H$ a continuous intertwining linear map. Then, we have shown in \ref{eq:kten}, for $z \in Z$, $K_S(\cdot,e)^\star z\in H^2(G,\tau)[V_\mu^H][Z]$.      We fix a orthonormal basis $\{z_p \},\, p=1,\dots,\dim  Z$ for $Z$. The hypothesis \\ \phantom{ccccccccc} $H^2(G,\tau)[V_\mu^H][Z]=L_ {\mathcal U(\mathfrak h_0)}(H^2(G,\tau)[W])[Z]$\\   implies for each  $p$, there exists $D_p\in \mathcal U(\mathfrak h_0)$ and $w_p \in W$ so that $K_S(\cdot,e)^\star z_p=L_{D_p} K_\lambda(\cdot,e)^\star w_p$. Next, we fix $f_1\in H^2(G,\tau)^\infty, h\in H$ and set $f:=L_{h^{-1}}(f_1)$, then $f(e)=f_1(h)$ and we recall $D^\star $ (resp. $ \check D$) is the formal adjoint of $D\in \mathcal U(\mathfrak g)$, (resp. is the image under the anti homomorphism of $\mathcal U(\mathfrak g)$ that extends minus the identity of $\mathfrak g$).  We have, \begin{equation}\label{eq:xxx}
\begin{split}
(S(f)(e),z_p)_Z & =\int_G(f(y),K_S(y,e)^\star z_p)_W dy \\
& =\int_G(L_{D_p^\star}f(y),K_\lambda(y,e)^\star w_p)_W dy \\
& =(L_{D_p^\star}f(e),w_p)_W \\ & =(R_{\check D_p^\star}f(e),w_p)_W.
\end{split}
\end{equation}
Thus, for each  $z\in Z$ and $f_1$ smooth vector we obtain
\begin{center} $(S(f_1)(h),z)_Z=\sum_p (S(f_1)(h),(z,z_p)_Z z_p)_Z= \sum_p   (R_{\check D_p^\star}f_1(h),w_p)_W (z_p,z)_Z  $. \end{center}  As in \cite[Proof of Lemma 2]{OV2} we conclude that for any $f \in H^2(G,\tau)$    \begin{equation} S(f)(h)= \sum_{1\leq p \leq  \dim  Z} ( R_{\check D_p^\star}f(h),w_p)_W \,\, z_p. \end{equation} Since $D_p \in \mathcal U(\mathfrak h_0)$   such a expression of $S(f)$ is an representation  in terms of normal derivatives. \end{proof}

\subsubsection{Converse to Proposition~\ref{prop:symmenormal} }\label{sub:convprop} We want to show: If every element
 in $Hom_H(H^2(G,\tau), H^2(H,\sigma)) $ has a expression as differential operator by means of "normal derivatives",  then, the equality  $\mathcal L_\lambda[Z]= H^2(G,\tau)[H^2(H,\sigma)][Z]=\mathcal U(\mathfrak h_0)W[Z]$ holds.

In fact, the hypothesis  $ S(f)(h)= \sum_{1\leq p \leq  \dim  Z} ( R_{\check D_p^\star}f(h),w_p)_W \,\, z_p$, $D_p \in \mathcal U(\mathfrak h_0)$,   yields $K_S(\cdot,e)^\star z=L_{D_z} K_\lambda (\cdot,e)^\star w_z$, $D_z \in \mathcal U(\mathfrak h_0)$, $w_z \in W$. The fact that $(\sigma, Z)$ has multiplicity one in $H^2(H,\sigma)$  gives \\ \phantom{xx} $\dim  Hom_H(H^2(G,\tau), H^2(H,\sigma))=\dim  Hom_L(Z, H^2(G,\tau)[H^2(H,\sigma)][Z])$. \\ Hence,    the functions\\ \phantom{xxxxxx} $\{K_S(\cdot,e)^\star z, z \in Z, S \in Hom_H(H^2(G,\tau), H^2(H,\sigma))\}$ \\ span $H^2(G,\tau)[H^2(H,\sigma)][Z]$.   Therefore, $H^2(G,\tau)[H^2(H,\sigma)][Z]$ is contained in $\mathcal U(\mathfrak h_0)W[Z]=L_{\mathcal U(\mathfrak h_0)}H^2(G,\tau)[W][Z]$. Owing to Theorem~\ref{prop:Rwelldefgc},  both spaces have the same dimension, whence,  the equality holds.

\medskip
The pairs so that Proposition~\ref{prop:symmenormal}\footnote{In work in progress we have shown the Proposition holds for $(\mathfrak{sp}(m,1), \mathfrak{sp}(m-1,1)+\mathfrak{sp}(1))$ and a quaternionic representation.} holds for scalar holomorphic Discrete Series are
$ (\mathfrak{su}(m,n), \mathfrak{su} (m,l) +\mathfrak{su} ( n-l)+\mathfrak u(1)),$ $(\mathfrak{so}(2m,2), \mathfrak u(m,1)),$  $(\mathfrak{so}^\star (2n), \mathfrak u(1,n-1)),$ $(\mathfrak{so}^\star(2n),  \mathfrak{so}(2) +\mathfrak{so}^\star(2n-2)),$ $(\mathfrak e_{6(-14)}, \mathfrak{so}(2,8)+\mathfrak{so}(2)).$ See \cite[(4.6)]{Vaint}.

\subsubsection{Comments on  the interplay  among   the subspaces, $\mathcal L_\lambda$, $\mathcal U(\mathfrak h_0)W$, $H^2(G,\tau)_{K-fin}$ and symmetry breaking operators} It readily follows that the subspace $\mathcal L_\lambda[Z]=V_\lambda^G[H^2(H, \sigma)][Z]$ is equal to the closure of  the linear span of

  $\mathcal K_{Sy}(G,H):=\{K_{S^\star}(e,\cdot)z=K_{S}(\cdot,e)^\star z, z\in Z, S \in Hom_H(V_\lambda^G, V_\mu^H) \}$.

 \smallskip
 (1)\,  $ H^2(G,\tau)_{K-fin} \cap \mathcal L_\lambda [Z] $
 is equal to the linear span of  the elements in $\mathcal K_{Sy}(G,H) $    so that  the corresponding symmetry breaking operator  is  represented   by   a differential   operator. See \cite[Lemma 4.2]{OV2}.
 \smallskip

(2)  \, $ \mathcal U(\mathfrak h_0)W \cap  \mathcal L_\lambda [Z]$
 is equal to the linear span corresponding to the elements   $ K_S $ in $\mathcal K_{Sy}(G,H) $ so  that $S$ is  represented   by   normal   derivative    differential,  operator.   This is shown in Proposition~\ref{prop:symmenormal} and its converse.

 \smallskip
(3)    The set of  symmetry breaking operators represented   by  a differential   operator  is not the null space if and only if $res_H(\pi_\lambda)$ is $H$-discretely decomposable. See \cite[Theorem 4.3]{OV2} and the proof of Proposition~\ref{prop:symmenormal}.

\smallskip

(4) We believe that from Nakahama's thesis, it is possible to construct examples of   $V_\lambda^G[H^2(H, \sigma)][Z]\cap \mathcal U(\mathfrak h_0)W[Z]\not= \{0\}$, so that the equality $V_\lambda^G[H^2(H, \sigma)][Z]= \mathcal U(\mathfrak h_0)W[Z]$ does not hold! That is, there are symmetry breaking operators  represented by  plain differential operators and some of the operators are not represented by normal derivative   operators.

\subsubsection{A functional equation for symmetry breaking operators}   Notation is as in Theorem~\ref{prop:Rwelldefgc}.  We assume $(G,H)$ is a symmetric pair and $res_H(\pi_\lambda)$ is admissible. The objects involved in  the equation are:  $H_0=G^{\sigma \theta}$, $Z=V_{\mu+\rho_n^H}^L$ the lowest $L$-type for $V_\mu^H$, $\mathcal L_\lambda=\sum_{\mu} H^2(G,\tau)[V_\mu^H][V_{\mu+\rho_n^H}^L]$, $\mathcal U(\mathfrak h_0)W =L_{\mathcal U(\mathfrak h_0) }H^2(G,\tau)[W]$,     $L$-isomorphism $D: \mathcal L_\lambda [Z] \rightarrow \mathcal U(\mathfrak h_0)W[Z]$, a $H$-equivariant continuous linear map $S: H^2(G,\tau)\rightarrow H^2(H,\sigma)$, the kernel  $K_S :G\times H\rightarrow Hom_\mathbb C(W,Z)$ corresponding to $S$, \ref{eq:kten} implies $K_S(\cdot,e)^\star z \in \mathcal L_\lambda[Z]$, finally, we recall $K_\lambda :G\times G\rightarrow Hom_\mathbb C(W,W)$ the kernel associated to the orthogonal projector onto $ H^2(G,\tau)$. Then,
\begin{prop}\label{prop:funcequasymmt} For $z\in Z$, $y\in G$ we have \begin{equation*} D(K_S(e,\cdot)^\star (z))(y)=\frac{1}{c}\int_{H_0}K_\lambda(h_0,y) D(K_S(e,\cdot)^\star (z))(h_0) dh_0. \end{equation*}
\end{prop}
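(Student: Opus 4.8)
The plan is to recognize that the asserted identity is nothing but the statement that $D(K_S(e,\cdot)^\star(z))$, viewed as a function on $G$, lies in $\mathrm{Cl}(\mathcal U(\mathfrak h_0)W)$ and is reproduced from its restriction to $H_0$ by the operator $\frac{1}{c}r_0^\star r_0$. So the first step is to locate $D(K_S(e,\cdot)^\star(z))$ inside $\mathrm{Cl}(\mathcal U(\mathfrak h_0)W)$. By \eqref{eq:kten} applied to the symmetry breaking setting (or rather its mirror, using $T:=S^\star$ as in the Remark following Theorem~\ref{prop:Rwelldefgc}), the vector $K_S(\cdot,e)^\star z = K_{S^\star}(e,\cdot)z$ belongs to $H^2(G,\tau)[V_\mu^H][V_{\mu+\rho_n^H}^L]=\mathcal L_\lambda[Z]$, so $D(K_S(e,\cdot)^\star(z))\in(\mathcal U(\mathfrak h_0)W)[Z]\subset\mathrm{Cl}(\mathcal U(\mathfrak h_0)W)$. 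This is the only place the hypothesis that $res_H(\pi_\lambda)$ is admissible (hence that $D$ exists, via Proposition~\ref{prop:gentau}) enters.

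Next I would compute the kernel of $r_0^\star r_0$ acting on $H^2(G,\tau)$. From the formula for $r_0$ as point evaluation and the reproducing property $f(x)=\int_G K_\lambda(y,x)f(y)\,dy$, one gets $K_{r_0}(x,h_0)=K_\lambda(x,h_0)$ and hence $K_{r_0^\star}(h_0,x)=K_\lambda(h_0,x)$; composing,
\begin{equation*}
 r_0^\star r_0(f)(y)=\int_{H_0}K_\lambda(h_0,y)\,f(h_0)\,dh_0,\qquad f\in H^2(G,\tau),\ y\in G,
\end{equation*}
which is exactly \eqref{eq:q} from the Remark after Proposition~\ref{prop:ktfromkto}. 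The remaining task is to show that $\frac{1}{c}r_0^\star r_0$ acts as the identity on $\mathrm{Cl}(\mathcal U(\mathfrak h_0)W)$. For this I would invoke Lemma~\ref{lem:injecrh}: $Ker(r_0)=\mathrm{Cl}(\mathcal U(\mathfrak h_0)W)^\perp$, so $r_0^\star r_0$ preserves $\mathrm{Cl}(\mathcal U(\mathfrak h_0)W)$ and vanishes on its orthocomplement. By Proposition~\ref{prop:gentau}(c), $r_0$ restricts to an $H_0$-isomorphism $\mathrm{Cl}(\mathcal U(\mathfrak h_0)W)\xrightarrow{\ \sim\ }\mathbf H^2(H_0,\tau)$, so $r_0^\star r_0$ on $\mathrm{Cl}(\mathcal U(\mathfrak h_0)W)$ is an $H_0$-endomorphism of a (finite direct sum of) square-integrable representation(s); by Schur and the multiplicity-one of the lowest $L$-type of each summand it is scalar on each isotypic block, and Proposition~\ref{prop:eigenvalq} (together with its Remark for the non-multiplicity-free case) identifies each scalar. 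When $res_L(\tau)$ is irreducible these all collapse to the single constant $c=d(\pi_\lambda)\dim W/d(\pi_{\eta_0}^{H_0})$ computed in \ref{sub:valuec}; in general $c$ should be read blockwise as in \ref{sub:eigenvalq}, but applied to the single vector $D(K_S(e,\cdot)^\star(z))$ which lies in one $L$-isotypic piece (that of $Z$), so a single constant suffices. Thus for $g:=D(K_S(e,\cdot)^\star(z))\in\mathrm{Cl}(\mathcal U(\mathfrak h_0)W)$ we have $\frac{1}{c}r_0^\star r_0(g)=g$, which unwinds to precisely the displayed equation.

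The main obstacle is bookkeeping rather than conceptual: one must make sure the constant $c$ in the statement is the right one for the $L$-isotypic component in which $Z$ sits — i.e. that the Proposition is implicitly using the blockwise eigenvalues of Proposition~\ref{prop:eigenvalq} — and that the mirror form of \eqref{eq:kten} for $S$ (as opposed to $T$) genuinely places $K_S(\cdot,e)^\star z$ in $\mathcal L_\lambda[Z]$ with the same $D$ used throughout. Once those identifications are pinned down, the proof is a two-line composition: $g\in\mathrm{Cl}(\mathcal U(\mathfrak h_0)W)$, $r_0$ is injective there with $r_0^\star r_0=c\cdot\mathrm{Id}$, hence $g(y)=\frac1c\int_{H_0}K_\lambda(h_0,y)g(h_0)\,dh_0$.
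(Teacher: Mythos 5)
Your proposal is correct and follows essentially the same route as the paper: the paper proves the identity by applying Proposition~\ref{prop:ktfromkto} to $T:=S^\star$, and the proof of that Proposition is exactly the argument you give (placing $D(K_S(e,\cdot)^\star z)$ in $\mathrm{Cl}(\mathcal U(\mathfrak h_0)W)$ via \eqref{eq:kten}, identifying $r_0^\star r_0$ with the integral operator against $K_\lambda(h_0,\cdot)$, and using Schur's lemma plus the computation of $c$ in \ref{sub:valuec} to see that $\frac{1}{c}r_0^\star r_0$ is the identity there). Your caveat about reading the constant blockwise via Proposition~\ref{prop:eigenvalq} when $res_L(\tau)$ is reducible is a fair and accurate observation that matches the paper's own remark following that Proposition.
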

Here, $c=d(\pi_\lambda) \dim  W/ d(\pi_{\eta_0}^{H_0}).$
 When, $D$ is the identity map, the functional equation turns into
 \begin{equation*}  K_S(x,h)    =\frac{1}{c}\int_{H_0}  K_S(  h_0,e) K_\lambda(x,hh_0) dh_0 \end{equation*}

The   functional equation follows from Proposition~\ref{prop:ktfromkto} applied to $T:=S^\star$. The second equation follows after we compute the adjoint of the first equation.

We note, that as in the case of holographic operators, a symmetry breaking operator can be recovered from its restriction to $H_0$.

We also note that \cite{Na} has shown a different functional equation for $K_S$ for scalar holomorphic Discrete Series and holomorphic embedding $H/L \rightarrow G/K$.

 \section{Tables}

 For an arbitrary symmetric pair $(G,H),$ whenever $\pi_\lambda^G$ is an admissible representation of $H,$  we define,
  $$ K_1= \left\{ \begin{array}{ll} Z_K & \mbox{\, if $\Psi_\lambda$ holomorphic } \\ K_1(\Psi_\lambda) & \phantom{x} \mbox{otherwise} \end{array} \right. $$
In the next tables we present the 5-tuple satisfying: $(G, H)$ is a symmetric pair, $H_0$ is the associated group to $H,$ $\Psi_\lambda$ is a  system of positive roots such that $\pi_\lambda^G$ is an admissible representation of $H,$ and $K_1 =Z_1( \Psi_\lambda) K_1(\Psi_\lambda).$  Actually, instead of writing  Lie groups we write their respective Lie algebras. Each  table is in part a reproduction of tables in \cite{KO} \cite{Vaint}. The tables  can also be computed by means of the techniques presented in \cite{DV}. Note that each table is "symmetric" when we replace $H$ by $H_0.$ As usual, $\alpha_{max}$ denotes the highest root in $\Psi_\lambda.$    Unexplained notation is as in \cite{Vaint}.


\rotatebox{90}{
\begin{tabular}{|c| c | c| c |c |} \hline  $G$  &  $H$  & $H_0$  &  $\Psi_\lambda$  &  $ K_1$  \\
\hline  $\mathfrak{su}(m,n)$  &  $\mathfrak{su}(m,k)\oplus \mathfrak{su}(n-k)\oplus \mathfrak{u}(1)$  &  $\mathfrak{su}(m,n-k)\oplus \mathfrak{su}(k)\oplus \mathfrak{u}(1)$   &  $\Psi_a$  &  $ \mathfrak{su}(m)$  \\
\hline  $\mathfrak{su}(m,n)$  &  $\mathfrak{su}(k,n)\oplus \mathfrak{su}(m-k)\oplus \mathfrak{u}(1)$  &  $\mathfrak{su}(m-k,n)\oplus \mathfrak{su}(k)\oplus \mathfrak{u}(1)$   &  $\tilde{\Psi_b}$  &  $ \mathfrak{su}(n)$  \\
\hline   $\mathfrak{so}(2m,2n), m>2$   &   $\mathfrak{so}(2m,2k)\oplus \mathfrak{so}(2n-2k)$ & $\mathfrak{so}(2m,2n-2k)\oplus \mathfrak{so}(2k)$   &   $\Psi_{\pm}$  & $ \mathfrak{so}(2m)$   \\
\hline   $\mathfrak{so}(4,2n) $   &   $\mathfrak{so}(4,2k)\oplus \mathfrak{so}(2n-2k)$ & $\mathfrak{so}(4,2n-2k)\oplus \mathfrak{so}(2k)$   &   $\Psi_{\pm}$  & $ \mathfrak{su}_2(\alpha_{max})$   \\
\hline   $\mathfrak{so}(2m,2n+1), m>2$   &   $\mathfrak{so}(2m,k)\oplus \mathfrak{so}(2n+1-k)$ &   $\mathfrak{so}(2m,2n+1-k)\oplus \mathfrak{so}(k)$   &   $\Psi_{\pm}$   &   $ \mathfrak{so}(2m)$   \\
\hline   $\mathfrak{so}(4,2n+1) $   &   $\mathfrak{so}(4,k)\oplus \mathfrak{so}(2n+1-k)$ &   $\mathfrak{so}(4,2n+1-k)\oplus \mathfrak{so}(k)$   &   $\Psi_{\pm}$   &   $ \mathfrak{su}_2(\alpha_{max})$ \\ \hline   $\mathfrak{so}(4,2n),  n>2$   &   $\mathfrak{u}(2,n)_1$   &   $w\mathfrak{u}(2,n)_1$   &   $\Psi_{1\, -1}$   &   $ \mathfrak{su}_2(\alpha_{max})$   \\
\hline   $\mathfrak{so}(4,2n),  n>2$   &   $\mathfrak{u}(2,n)_2$   &   $w\mathfrak{u}(2,n)_2$   &   $\Psi_{1 \, 1}$   &   $ \mathfrak{su}_2(\alpha_{max})$   \\
\hline   $\mathfrak{so}(4,4)$   &   $\mathfrak{u}(2,2)_{1\,1}$   &   $w\mathfrak{u}(2,2)_{11}$   &   $\Psi_{1\, -1},  \, w_{\epsilon,\delta}\Psi_{1 \, -1}$   &   $ \mathfrak{su}_2(\alpha_{max})$ \\
\hline
$\mathfrak{so}(4,4)$   &   $\mathfrak{u}(2,2)_{12}$   &   $w\mathfrak{u}(2,2)_{12}$   &   $\Psi_{1\, -1},  \, w_{\epsilon,\delta}\Psi_{1\, 1}$   &    $ \mathfrak{su}_2(\alpha_{max})$ \\
\hline   $\mathfrak{so}(4,4)$   &   $\mathfrak{u}(2,2)_{21}$   &   $w\mathfrak{u}(2,2)_{21}$   &   $\Psi_{1\, 1}, \,  w_{\epsilon,\delta}\Psi_{1\, -1}$   &   $ \mathfrak{su}_2(\alpha_{max})$ \\
\hline   $\mathfrak{so}(4,4)$   &   $\mathfrak{u}(2,2)_{22}$   &   $w\mathfrak{u}(2,2)_{22}$   &   $\Psi_{1\,1}, \, w_{\epsilon,\delta}\Psi_{1\,1}$   &   $ \mathfrak{su}_2(\alpha_{max})$ \\
\hline
  $\mathfrak{sp}(m,n)$   &   $\mathfrak{sp}(m,k)\oplus \mathfrak{sp}(n-k)$   &   $\mathfrak{sp}(m,n-k)\oplus \mathfrak{sp}(k)$   &   $\Psi_+$   &   $ \mathfrak{sp}(m)$   \\
\hline   $\mathfrak f_{4(4)}$   &   $\mathfrak{sp}(1,2)\oplus \mathfrak{su}(2)$ & $\mathfrak{so}(5,4)$   &   $\Psi_{BS}$   &   $ \mathfrak{su}_2(\alpha_{max})$   \\
\hline
\phantom{ } $\mathfrak{e}_{6(2)}$   &   $\mathfrak{so}(6,4)\oplus \mathfrak{so}(2)$ & $\mathfrak{su}(4,2)\oplus \mathfrak{su}(2)$   &   $\Psi_{BS}$   &   $ \mathfrak{su}_2(\alpha_{max})$   \\
\hline
  $\mathfrak{e}_{7(-5)}$   &   $ \mathfrak{so}(8,4)\oplus \mathfrak{su}(2)$ &   $\mathfrak{so}(8,4)\oplus \mathfrak{su}(2)$   &   $\Psi_{BS}$ & $ \mathfrak{su}_2(\alpha_{max})$   \\
\hline
  $\mathfrak{e}_{7(-5)}$   &   $\mathfrak{su}(6,2)$   &   $\mathfrak{e}_{6(2)}\oplus \mathfrak{so}(2)$   &   $\Psi_{BS}$   &   $ \mathfrak{su}_2(\alpha_{max})$   \\
\hline
  $\mathfrak{e}_{8(-24)}$   &   $\mathfrak{so}(12,4)$   &   $\mathfrak{e}_{7(-5)}\oplus \mathfrak{su}(2)$   &   $\Psi_{BS}$   &   $ \mathfrak{su}_2(\alpha_{max})$   \\
\hline
\end{tabular} } \\
Table 1. Case $U=T, \Psi_\lambda$ non holomorphic.

\rotatebox{90}{  \begin{tabular}{|c| c | c| c |c |} \hline  $G$    &  $H$  &  $H_0$  & $\Psi_\lambda$  &  $ K_1$  \\
\hline  $\mathfrak {su}(2,2n), \, n> 2$  & $\mathfrak {sp}(1,n)$  &  $\mathfrak {sp}(1,n)$  &  $\Psi_1$  &  $ \mathfrak{su}_2(\alpha_{max})$  \\
\hline  $\mathfrak{su}(2,2)$  & \phantom{xx} $\mathfrak{sp}(1,1)$ &  $\mathfrak{sp}(1,1)$ &  $\Psi_1$  &  $ \mathfrak{su}_2(\alpha_{max})$  \\
\hline
   $\mathfrak{su}(2,2)$ &  $\mathfrak{sp}(1,1)$  &  $\mathfrak{sp}(1,1)$  &  $\tilde{\Psi}_1$  & $ \mathfrak{su}_2(\alpha_{max})$  \\
\hline
  $\mathfrak{so}(2m,2n), m>2$  &  $ \mathfrak{so}(2m,2k+1) + \mathfrak{so}(2n-2k-1)$  &  $\mathfrak{so}(2m,2n-2k-1)+ \mathfrak{so}(2k+1)$  &  $\Psi_{\pm}$  &  $ \mathfrak{so}(2m)$ \\
  \hline
  $\mathfrak{so}(4,2n), $  &  $ \mathfrak{so}(4,2k+1) + \mathfrak{so}(2n-2k-1)$  &  $\mathfrak{so}(4,2n-2k-1)+ \mathfrak{so}(2k+1)$  &  $\Psi_{\pm}$  &  $ \mathfrak{su}_2(\alpha_{max})$ \\
  \hline
  $\mathfrak{so}(2m,2), m>2 $  &  $ \mathfrak{so}(2m,1) $  &  $\mathfrak{so}(2m,1)$  &  $\Psi_{\pm}$  &  $ \mathfrak{so}(2m)$ \\
  \hline
  $\mathfrak{so}(4,2),  $  &  $ \mathfrak{so}(4,1) $  &  $\mathfrak{so}(4,1)$  &  $\Psi_{\pm}$  &  $ \mathfrak{su}_2(\alpha_{max})$ \\
\hline
 $\mathfrak{e}_{6(2)}$  &  $\mathfrak{f}_{4(4)}$  &  $\mathfrak{sp}(3,1)$  &  $\Psi_{BS}$  &  $ \mathfrak{su}_2(\alpha_{max})$  \\
\hline
\end{tabular} } \\

Table 2, Case $U\not=T, \Psi_\lambda$ non holomorphic.

\begin{center}
\begin{tabular}{|c| c | c|}
\hline   $G$  &   $H \, $ (a)  &  $H_0 \,   $  (b)   \\
 \hline   $\mathfrak{su}(m,n), m\not= n$   &     $\mathfrak{su}(k,l)+\mathfrak{su}(m-k,n-l)+ \mathfrak u(1)$ & $\mathfrak{su}(k,n-l)+\mathfrak{su}(m-k,l)+ \mathfrak u(1)$    \\
 \hline   $\mathfrak{su}(n,n)$  & $\mathfrak{su}(k,l)+ \mathfrak{su}(n-k,n-l)+\mathfrak u(1)$ &$\mathfrak{su}(k,n-l)+ \mathfrak{su}(n-k,l)+\mathfrak u(1)$       \\
 \hline   $\mathfrak{so}(2,2n)$  & $\mathfrak{so}(2,2k)+ \mathfrak{so}(2n-2k)$   & $\mathfrak{so}(2,2n-2k)+ \mathfrak{so}(2k)$      \\
\hline   $\mathfrak{so}(2,2
n)$  & $\mathfrak{u}(1,n)$ & $ \mathfrak{u}(1,n)$        \\
\hline   $\mathfrak{so}(2,2n+1)$  & $\mathfrak{so}(2,k)+ \mathfrak{so}(2n+1-k)$ & $\mathfrak{so}(2,2n+1-k)+ \mathfrak{so}(k)$       \\
\hline   $\mathfrak{so}^\star (2n)$  & $\mathfrak{u}(m,n-m)$ & $\mathfrak{so}^\star(2m)+ \mathfrak{so}^\star(2n-2m)$       \\
\hline   $\mathfrak{sp}(n, \mathbb R)$  & $\mathfrak{u}(m,n-m)$ & $\mathfrak{sp}(m, \mathbb R)+ \mathfrak{sp}(n-m, \mathbb R)$       \\
\hline $\mathfrak e_{6(-14)}$ & $\mathfrak{so}(2,8)+\mathfrak{so}(2) $ & $\mathfrak{so}(2,8)+\mathfrak{so}(2)$ \\
\hline $\mathfrak e_{6(-14)}$ & $\mathfrak{su}(2,4)+\mathfrak{su}(2) $ & $\mathfrak{su}(2,4)+\mathfrak{su}(2)$ \\
\hline $\mathfrak e_{6(-14)}$ & $\mathfrak{so}^\star(10)+\mathfrak{so}(2) $ & $\mathfrak{su}(5,1)+\mathfrak{sl}(2, \mathbb R)$ \\
\hline $\mathfrak e_{7(-25)}$ & $\mathfrak{so}^\star(12)+\mathfrak{su}(2) $ & $\mathfrak{su}(6,2)$ \\
\hline $\mathfrak e_{7(-25)}$ & $\mathfrak{so}(2,10)+\mathfrak{sl}(2, \mathbb R) $ & $\mathfrak e_{6(-14)} + \mathfrak{so}(2)$ \\
\hline   $\mathfrak{su}(n,n)$  & $\mathfrak{so}^\star(2n)$ &$\mathfrak{sp}(n,\mathbb R)$       \\
\hline   $\mathfrak{so}(2,2n)$  & $\mathfrak{so}(2,2k+1)+ \mathfrak{so}(2n-2k-1)$   & $\mathfrak{so}(2,2n-2k-1)+ \mathfrak{so}(2k+1)$      \\
\hline
\end{tabular} \\
Table 3, $\pi_\lambda^G $ holomorphic Discrete Series. \\
The last two lines show the unique holomorphic pairs so that $U \not= T.$
\end{center}

\section{Partial list of symbols and definitions}

\noindent
- $(\tau ,W),$ $(\sigma, Z)$, $L^2(G \times_\tau W), L^2(H\times_\sigma Z)$ (cf. Section~\ref{sec:prelim}).\\
-$\,H^2(G,\tau)=V_\lambda=V_\lambda^G $, $H^2(H,\sigma)=V_\mu^H, \pi_\mu^H, \pi_\nu^K.  $ (cf. Section~\ref{sec:prelim}).\\
-$\pi_\lambda =\pi_\lambda^G$, $d_\lambda=d(\pi_\lambda)$ formal degree of $\pi_\lambda,$   $P_\lambda, P_\mu,  K_\lambda,  K_\mu,  $  (cf. Section~\ref{sec:prelim}). \\
-$P_X$ orthogonal projector onto subspace $X$.\\
-$\Phi(x)=P_W \pi (x)P_W$ spherical function attached to the lowest $K$-type $W$ of $\pi_\lambda$.\\
-$K_\lambda(y,x)=d(\pi_\lambda)\Phi(x^{-1}y)$.\\
-$M_{K-fin} (resp. M^\infty) $ $K-$finite vectors in $M$ (resp. smooth vectors in $M$).\\
-$dg,dh$ Haar measures on $G$, $H$.\\
-A unitary representation is {\it square integrable}, equivalently a {\it Discrete Series} representation,  (resp. {\it integrable}) if some nonzero  matrix coefficient is square integrable (resp. integrable) with respect to Haar measure on the group in question. \\
-$\Theta_{\pi_\mu^H}(...)$  Harish-Chandra character of the representation $\pi_\mu^H.$\\
-For a module $M$ and a simple submodule $N$, $M[N]$ denotes the {\it isotypic component} of $N$ in $M$. That is, $M[N]$ is the sum of all irreducible submodules isomorphic to $N.$ If topology is involved, we define $M[N]$ to be the closure of $M[N].$ \\
-$  M_{H-disc}$ is the closure of the linear subspace spanned by the totality of $H-$irreducible submodules. $ M_{disc}:= M_{G-disc}$\\
-A representation $M$ is $H-${\it discretely decomposable} if $ M_{H-disc} =M.$\\
-A representation is $H-${\it admissible} if it is $H-$discretely decomposable and each isotypic component is equal to a finite sum of $H-$irreducible representations.\\
-$\mathcal U(\mathfrak g) $ (resp. $\mathfrak z(\mathcal U(\mathfrak g))=\mathfrak z_\mathfrak g$) universal enveloping algebra of the Lie algebra $\mathfrak g$(resp. center of universal enveloping algebra).\\
-$\mathrm{Cl}(X)= $closure of the set $X$.\\
-$I_X$ identity function on set $X$.\\
-$\mathbb T$ one dimensional torus.\\
-$Z_S$ identity connected component of the center of the group $S$.\\
$S^{(r)}(V)$ the $r^{th}$-symmetric power of the vector space $V$.

\end{document}